\DeclareSymbolFont{SY}{U}{psy}{m}{n}
\DeclareMathSymbol{\emptyset}{\mathord}{SY}{'306}
\theoremstyle{plain}
\newtheorem{thm}{Theorem}[section]
\newtheorem{lem}[thm]{Lemma}
\theoremstyle{definition}
\newtheorem{defn}[thm]{Definition}
\newtheorem{rem}[thm]{Remark}
\numberwithin{equation}{section}
\def\beq{\begin{eqnarray}}
\def\eeq{\end{eqnarray}}
\def\beqa{\begin{eqnarray*}}
\def\eeqa{\end{eqnarray*}}
\begin{document}
%%%%%%%%%%%%%%%%%%%%%%%%%%%%%%%%%%%%%%%%%%%%%%%%%%%%%%%%%%%%%%%%%%%%%%%%
\title{The Cowen-Douglas operators with strongly flag structure}

\author{Yufang Xie and Kui Ji$^{*}$}
%  Current address
%\curraddr[ ]{Business College, Hebei Normal University, Shijiazhuang, Hebei 050016, China}
\curraddr[Y. Xie, K. Ji ]{School of Mathematical Sciences, Hebei Normal University,
Shijiazhuang, Hebei 050016, China}

\email[Y. Xie]{xieyufangmath@outlook.com}
\email[K. Ji]{jikui@hebtu.edu.cn, jikuikui@163.com}

\keywords{strongly flag structure, similarity invariants, weakly homogeneous operator}
\begin{abstract}
Denote $\mathcal{FB}_{n}(\Omega)$ as the collection of operators possessing a flag structure in the Cowen-Douglas class $\mathcal{B}_{n}(\Omega)$, and all the irreducible homogeneous operators in $\mathcal{B}_{n}(\Omega)$ belong to this class. G. Misra et al. pointed out in \cite{JJKM} that the unitary invariants of this class of operators include the curvature and the second fundamental form of the corresponding line bundle. In terms of the invariants, it is more tractable compared to general operators in $\mathcal{B}_{n}(\Omega)$. A subclass of $\mathcal{FB}_{n}(\Omega)$, denoted by $\mathcal{CFB}_{n}(\Omega)$, was proven to be norm dense in $\mathcal{B}_{n}(\Omega)$ in \cite{JJ}. In this paper, we introduce a smaller subclass of $\mathcal{FB}_{n}(\Omega)$ which possesses a strongly flag structure, and for which the curvature and the second fundamental form of the associated line bundle is a complete set of unitary invariants. And we notice that this class of operators is norm dense in $\mathcal{B}_{n}(\Omega)$ up to similarity. On this basis, we have completed the similar classification of a large class of operators with flag structure, which reduces the number of the similarity invariants in \cite{JKSX} from $\frac{n(n-1)}{2}+1$ to $n$. Furthermore, we also get a complete characterization of weakly homogeneous operators with high index and flag structure.
\end{abstract}

\maketitle

\section{Introduction}
Let $\Omega$ be a connected open subset of the complex plane $\mathbb{C}$ and $\mathcal{H}$ be a complex separable Hilbert space. Let $\mathcal{L}(\mathcal{H})$ denote the algebra of all bounded linear operators on a complex separable Hilbert space $\mathcal{H}$. In 1978, M. J. Cowen and R. G. Douglas defined an important class of the operators , denoted by $\mathcal{B}_{n}(\Omega)$. For any $T\in\mathcal{B}_{n}(\Omega)$, $E_{T}$ denotes the Hermitian holomorphic bundle with fibres $\ker(T-w)$, $w\in\Omega$. The operators in this class are determined by their eigenvectors, corresponding to eigenvalues from $\Omega$. It indicates that there exists a one to one correspondence between the unitary equivalence class of the operators $T$ and the local equivalence classes of the Hermitian holomorphic vector bundles $E_{T}$ determined by them. Consequently, the invariants of $E_{T}$ serve as unitary invariants for the operator $T$. As shown in \cite{CD}, when $n$ is equal to 1, the curvature of $E_{T}$ is a completely unitary invariant of $T$. Subsequently, in \cite{Misra}, G. Misra used the curvature of holomorphic vector bundle to describe the homogeneous operator with index 1, which is unitarily equivalent to the adjoint of the multiplication operator $M_{z}$ on the weighted Bergman space. When $n$ is greater than 1, however, the equality of curvatures alone does not determine the unitary equivalence of the operators. %based on the description of irreducible homogeneous and the localization of Hilbert modules,
Therefore, it is worthy to find a complete set of tractable unitary invariants.

As stated in the book \cite{JW}, for an operator $T$ in the Cowen-Douglas class $\mathcal{B}_{n}(\Omega)$, $T$ can be written as an $n\times n$ upper-triangular matrix $T=\big((T_{i,j})\big)_{n\times n}$, where $T_{i,i} $ is in $\mathcal{B}_{1}(\Omega)$ for all $1\le i\le n$ and non-zero elements $T_{i,j}$ are bounded linear operators. What's more, $T$ is said to possess a flag structure if $T_{i,i}T_{i,i+1}=T_{i,i+1}T_{i+1,i+1}$ for $1\le i\le n-1$. C. L. Jiang, D. K. Keshari, G. Misra and the second author, in \cite{JJKM}, introduced the collection of operators possessing a flag structure in $\mathcal{B}_{n}(\Omega)$, denoted by $\mathcal{FB}_{n}(\Omega)$.
\begin{defn}\cite{JJKM}\label{FBn}
$\mathcal{FB}_{n}(\Omega)$ is the collection of all bounded linear operators $T$ defined on some complex separable Hilbert space $\mathcal{H}=\mathcal{H}_{1}\oplus\cdots\oplus\mathcal{H}_{n}$, which are written as an $n\times n$ upper-triangular matrix
$$T=\begin{pmatrix}
T_{1,1}&T_{1,2}&T_{1,3}&\cdots&T_{1,n}\\
0&T_{2,2}&T_{2,3}&\cdots&T_{2,n}\\
\vdots&\ddots&\ddots&\ddots&\vdots\\
0&\cdots&0&T_{n-1,n-1}&T_{n-1,n}\\
0&\cdots&\cdots&0&T_{n,n}
\end{pmatrix},$$
where the operator $T_{i,i}:\mathcal{H}_{i}\rightarrow\mathcal{H}_{i}$, defined on a complex separable Hilbert space $\mathcal{H}_{i}$, $1\le i\le n$, is assumed to be in $\mathcal{B}_{1}(\Omega)$ and $T_{i,i+1}:\mathcal{H}_{i+1}\rightarrow\mathcal{H}_{i}$, is assumed to be a non-zero intertwining bounded operator, namely, $T_{i,i}T_{i,i+1}=T_{i,i+1}T_{i+1,i+1},\ 1\le i\le n-1$.
\end{defn}

%It has been proven that these operators are irreducible. Particularly, they are strongly irreducible if and only if for $1\le i\le n-1$, each $T_{i,i+1}$ is not in the range of Rosenblum operator $\sigma_{T_{i,i},T_{i+1,i+1}}$. Moreover, as an application, all homogeneous operators in $\mathcal{B}_{2}(\mathbb{D})$ are in this class. An equivalent condition for the homogeneous operators in $\mathcal{FB}_{2}(\mathbb{D})$ is also given. Compared to the case of $n=1$, we need not only the curvature but also the second fundamental form to characterize the homogeneous operators with index 2. The reader is referred to \cite{JJKM}.
It is shown that when $T$ is in $\mathcal{FB}_{2}(\Omega)$, we can find a holomorphic frame $\gamma=\{\gamma_{0},\gamma_{1}\}$ where $\gamma_{0}(w)$ and $\frac{\partial}{\partial w}\gamma_{0}(w)-\gamma_{1}(w)$ are orthogonal for all $w$ in $\Omega$. Consequently, the explicit description of an operator $T\in\mathcal{FB}_{2}(\Omega)$ can be given,
%As we all known, an operator $T\in\mathcal{B}_{n}(\Omega)$ can be realized as the adjoint of a multiplication operator on some reproducing kernel Hilbert space of holomorphic $\mathbb{C}^{n}$-valued functions which defined on $\Omega^{*}=\{w:\bar{w}\in\Omega\}$.
which determines that the holomorphic change of frame for the vector bundle $E_{T}$, preserving the orthogonality relation between $\gamma_{0}(w)$ and $\frac{\partial}{\partial w}\gamma_{0}(w)-\gamma_{1}(w)$, must be of the form $\begin{pmatrix}\phi&\phi'\\0&\phi\end{pmatrix}$(see \cite{JJKM}, Lemma 2.5).
The important consequence of this conclusion is that  the unitary operator intertwining $T$ with other operator in $\mathcal{FB}_{2}(\Omega)$ must be diagonal. On the basis of $n=2$ and using mathematical induction, it is also obtained that the unitary operator intertwining two operators in $\mathcal{FB}_{n}(\Omega)$ must be diagonal(see \cite{JJKM}, Theorem 3.5). This phenomenon is called as ``rigidity''. It means that if a diagonal unitary operator intertwining two operators, one of which possesses a flag structure, then other operator also possesses a flag structure. Conversely, if two operators possessing a flag structure are unitarily equivalent, then the unitary operator intertwining them must be a diagonal operator.
%which means that the unitary equivalence class of the operator and the flag structure determine each other.
Moreover, if two operators $T\in \mathcal{L}(\mathcal{H}_{1}\oplus\mathcal{H}_{2}\oplus\cdots\oplus\mathcal{H}_{n}),\  \widetilde{T}\in\mathcal{L}(\widetilde{\mathcal{H}}_{1}\oplus\widetilde{\mathcal{H}}_{2}\oplus\cdots\oplus\widetilde{\mathcal{H}}_{n})$ which belong to the class $\mathcal{FB}_{n}(\Omega)$
are unitarily equivalent, then we can also find a digonal unitary operator intertwining $T|_{\mathcal{H}_{1}\oplus\mathcal{H}_{2}\oplus\cdots\oplus\mathcal{H}_{k}}$ and $\widetilde{T}|_{\widetilde{\mathcal{H}}_{1}\oplus\widetilde{\mathcal{H}}_{2}\oplus\cdots\oplus\widetilde{\mathcal{H}}_{k}}$ for any $1\le k\le n$. In fact, if an unitary operator $U=U_{1}\oplus U_{2}\oplus\cdots\oplus U_{n}$ intertwines $T$ and $\tilde{T}$, then we have $$(U_{1}\oplus U_{2}\oplus\cdots\oplus U_{k})T|_{\mathcal{H}_{1}\oplus\mathcal{H}_{2}\oplus\cdots\oplus\mathcal{H}_{k}}
=\widetilde{T}|_{\widetilde{\mathcal{H}}_{1}\oplus\widetilde{\mathcal{H}}_{2}\oplus\cdots\oplus\widetilde{\mathcal{H}}_{k}}
(U_{1}\oplus U_{2}\oplus\cdots\oplus U_{k}),$$
where each $U_{1}\oplus U_{2}\oplus\cdots\oplus U_{k}$ is an unitary operator for any $1\le k\le n$.
By the rigidity, a complete set of unitary invariants for operators in $\mathcal{FB}_{n}(\Omega)$ is obtained.

\begin{thm}\cite{JJKM}
For $T,\ \tilde{T}\in\mathcal{FB}_{n}(\Omega)$,
$$T\sim_{u}\tilde{T}\ \ \ \mbox{if\ and\ only\ if}\ \ \left\{\begin{smallmatrix}\mathcal{K}_{T_{n,n}}=\mathcal{K}_{\tilde{T}_{n,n}},\\
\theta_{i,i+1}(T)=\theta_{i,i+1}(\tilde{T})\\
\frac{\langle T_{i,j}(t_{j}),t_{i}\rangle}{\Vert t_{i}\Vert^{2}}=\frac{\langle \tilde{T}_{i,j}(\tilde{t}_{j}),\tilde{t}_{i}\rangle}{\Vert \tilde{t}_{i}\Vert^{2}}\\\end{smallmatrix}
\right\}.$$
\end{thm}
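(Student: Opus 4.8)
The plan is to prove both implications by exploiting the rigidity phenomenon recorded above (Theorem 3.5 of \cite{JJKM}), which forces every intertwining unitary to be block-diagonal, $U = U_{1}\oplus\cdots\oplus U_{n}$. Once $U$ is known to be diagonal, the single operator equation $UT=\tilde{T}U$ decouples into the system of entrywise identities $U_{i}T_{i,j}=\tilde{T}_{i,j}U_{j}$ for $1\le i\le j\le n$, and the whole theorem reduces to translating these identities into the bundle-theoretic data listed on the right-hand side and back.

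For necessity, suppose $T\sim_{u}\tilde{T}$. By rigidity the intertwiner is diagonal, so I read off the diagonal equations $U_{i}T_{i,i}=\tilde{T}_{i,i}U_{i}$; each says that $U_{i}$ implements a unitary equivalence of the line-bundle operators $T_{i,i},\tilde{T}_{i,i}\in\mathcal{B}_{1}(\Omega)$, whence by the index-one Cowen--Douglas theorem \cite{CD} the curvatures coincide, giving in particular $\mathcal{K}_{T_{n,n}}=\mathcal{K}_{\tilde{T}_{n,n}}$. Equality of curvature lets me choose frames so that $U_{i}$ carries the holomorphic frame $t_{i}$ of $E_{T_{i,i}}$ onto a unimodular multiple of $\tilde{t}_{i}$ with $\|t_{i}\|=\|\tilde{t}_{i}\|$. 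Then the superdiagonal equations $U_{i}T_{i,i+1}=\tilde{T}_{i,i+1}U_{i+1}$ match the second fundamental forms, $\theta_{i,i+1}(T)=\theta_{i,i+1}(\tilde{T})$, since $\theta_{i,i+1}$ is built from exactly these frames and the coupling operator $T_{i,i+1}$; and pairing the general off-diagonal equation $U_{i}T_{i,j}=\tilde{T}_{i,j}U_{j}$ against the frames, using $U_{i}t_{i}=\tilde{t}_{i}$ (the unimodular factor cancels and $U_{i}$ is isometric), yields $\langle T_{i,j}t_{j},t_{i}\rangle=\langle\tilde{T}_{i,j}\tilde{t}_{j},\tilde{t}_{i}\rangle$ together with $\|t_{i}\|=\|\tilde{t}_{i}\|$, i.e.\ the normalized inner products agree.

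For sufficiency I reverse this, constructing $U$ from the data by induction on $n$, anchored at the bottom-right corner. Equality $\mathcal{K}_{T_{n,n}}=\mathcal{K}_{\tilde{T}_{n,n}}$ produces $U_{n}$ with $U_{n}T_{n,n}=\tilde{T}_{n,n}U_{n}$. Climbing the flag, the intertwining relation $T_{i,i}T_{i,i+1}=T_{i,i+1}T_{i+1,i+1}$ realizes consecutive diagonal blocks as a sub/quotient pair of line bundles, so a Gauss--Codazzi type identity ties the curvatures of $T_{i,i}$ and $T_{i+1,i+1}$ through the second fundamental form; hence $\mathcal{K}_{T_{n,n}}=\mathcal{K}_{\tilde{T}_{n,n}}$ and the equalities $\theta_{i,i+1}(T)=\theta_{i,i+1}(\tilde{T})$ together force $\mathcal{K}_{T_{i,i}}=\mathcal{K}_{\tilde{T}_{i,i}}$ for every $i$, so each $U_{i}$ exists and can be normalized compatibly along the frames $t_{i},\tilde{t}_{i}$. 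It then remains to check that the diagonal $U=U_{1}\oplus\cdots\oplus U_{n}$ so assembled satisfies every off-diagonal equation $U_{i}T_{i,j}=\tilde{T}_{i,j}U_{j}$; this is exactly where the third family of invariants $\langle T_{i,j}(t_{j}),t_{i}\rangle/\|t_{i}\|^{2}$ is consumed, as these scalars are precisely the holomorphic coefficients recording how $T_{i,j}$ acts between the frames.

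The main obstacle is this sufficiency direction, and specifically the simultaneity problem: the $U_{i}$ coming from the separate curvature equalities are each determined only up to the holomorphic gauge freedom of their line bundle, and I must show these freedoms can be fixed \emph{at once} so that all off-diagonal identities close up. Controlling this is exactly what the flag condition together with the induction from the base case $\mathcal{FB}_{2}(\Omega)$ (Lemma 2.5 of \cite{JJKM}, which pins the admissible frame changes to the form $\left(\begin{smallmatrix}\phi&\phi'\\0&\phi\end{smallmatrix}\right)$) is designed to achieve; the rigidity of the triangular structure leaves no residual degrees of freedom. The remaining labor is the bookkeeping verifying that the three listed families of invariants exhaust the metric of $E_{T}$ in its canonical frame, so that no unmatched quantity survives.
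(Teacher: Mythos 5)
You are not being compared against a proof that this paper actually contains: the quoted theorem is cited from \cite{JJKM} and never proved here. What the paper does prove (Section 3) is an analogue for the subclass with $T_{i,j}=0$, $j-i\ge 2$, whose invariants are $\mathcal{K}_{T_{n,n}}$ together with the \emph{norm-ratio} equalities $\Vert t_{i}\Vert/\Vert \tilde{t}_{i}\Vert=\Vert t_{i+1}\Vert/\Vert \tilde{t}_{i+1}\Vert$, not the second fundamental forms and the angles. Your overall architecture (rigidity forces the intertwining unitary to be diagonal; then build a diagonal unitary from frame data) is exactly the mechanism of \cite{JJKM} and of the paper's Section 3 proof, and your necessity half is essentially correct, with one caveat: you cannot normalize each $U_{i}$ separately so that $U_{i}t_{i}$ is a unimodular multiple of $\tilde{t}_{i}$. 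What actually happens is that the chained frames $t_{i}=T_{i,i+1}t_{i+1}$ together with the superdiagonal relations $U_{i}T_{i,i+1}=\tilde{T}_{i,i+1}U_{i+1}$ force a \emph{single} non-vanishing holomorphic $\phi$ with $U_{i}t_{i}=\phi\,\tilde{t}_{i}$ for every $i$; all three families of invariants are then gauge-invariant under this common factor, which is what makes necessity go through.

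The sufficiency half has a genuine gap at its central step. You anchor at the $(n,n)$ corner and claim that $\mathcal{K}_{T_{n,n}}=\mathcal{K}_{\tilde{T}_{n,n}}$ plus $\theta_{i,i+1}(T)=\theta_{i,i+1}(\tilde{T})$ forces $\mathcal{K}_{T_{i,i}}=\mathcal{K}_{\tilde{T}_{i,i}}$ for all $i$ by "a Gauss--Codazzi type identity". By the paper's formula, $\theta_{i,i+1}=\mathcal{K}_{T_{i,i}}\,d\bar{w}\,\big(r_{i}-\mathcal{K}_{T_{i,i}}\big)^{-1/2}$ with $r_{i}=\Vert T_{i,i+1}t_{i+1}\Vert^{2}/\Vert t_{i+1}\Vert^{2}$ and $\mathcal{K}_{T_{i,i}}=\mathcal{K}_{T_{i+1,i+1}}-\frac{\partial^{2}}{\partial\bar{w}\partial w}\log r_{i}$, and the paper's Section 2 remark converts equality of second fundamental forms into equality of ratios only \emph{under the hypothesis} $\mathcal{K}_{T_{i,i}}=\mathcal{K}_{\tilde{T}_{i,i}}$ --- the curvature one level above, i.e.\ exactly what you are trying to conclude. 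Run in your upward direction, the step is a uniqueness assertion for the nonlinear second-order equation $\theta=\big(K-\frac{\partial^{2}}{\partial\bar{w}\partial w}\log r\big)\big(r-K+\frac{\partial^{2}}{\partial\bar{w}\partial w}\log r\big)^{-1/2}$ in the unknown $r$ (with $K=\mathcal{K}_{T_{i+1,i+1}}$ fixed), which nothing in your sketch addresses; Gauss--Codazzi relates sub- and quotient-bundle curvatures to the curvature of the \emph{total} bundle, which is not among the listed invariants, so it cannot supply the implication. The induction closes only in the downward direction: if $\mathcal{K}_{T_{i,i}}=\mathcal{K}_{\tilde{T}_{i,i}}$, then the second fundamental form recovers the ratio \emph{algebraically}, $r_{i}=\mathcal{K}_{T_{i,i}}+\mathcal{K}_{T_{i,i}}^{2}/|\theta_{i,i+1}|^{2}$, whence $\mathcal{K}_{T_{i+1,i+1}}=\mathcal{K}_{\tilde{T}_{i+1,i+1}}$, and so on down the flag; this asymmetry is precisely why the paper's own Section 3 variant, which does anchor at $\mathcal{K}_{T_{n,n}}$, replaces the second fundamental forms by ratio conditions. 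A second, independent gap is your treatment of the third family: since $T_{i,j}t_{j}(w)$ for $j-i\ge 2$ need not lie in $\ker(T_{i,i}-w)$, the scalar $\langle T_{i,j}t_{j}(w),t_{i}(w)\rangle/\Vert t_{i}(w)\Vert^{2}$ records only the component of $T_{i,j}t_{j}(w)$ along $t_{i}(w)$ at the same point, so it does not by itself "record how $T_{i,j}$ acts between the frames". To get $U_{i}T_{i,j}=\tilde{T}_{i,j}U_{j}$ one must upgrade the diagonal equality to $\langle T_{i,j}t_{j}(w),t_{i}(z)\rangle=\phi(w)\overline{\phi(z)}\langle \tilde{T}_{i,j}\tilde{t}_{j}(w),\tilde{t}_{i}(z)\rangle$ for all $z,w\in\Omega$, which follows because both sides are holomorphic in $w$ and anti-holomorphic in $z$ and hence determined by their values on the diagonal, and then conclude on the dense span of $\{t_{i}(z)\}$. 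This polarization step is the idea your proposal is missing.
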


From the above theorem, the curvature $\mathcal{K}_{T_{n,n}}$ and the second fundamental forms $\theta_{i,i+1}(T)$ alone cannot describe the completely unitary invariants for operators in $\mathcal{FB}_{n}(\Omega)$ when $n$ is greater than 2. It refers to a set of $\frac{n(n-1)}{2}+1$ completely unitary invariants which are easy to compute. We should note that the operators in $\mathcal{FB}_{2}(\Omega)$ are very special. They not only possess a flag structure, but also only the curvature and the second fundamental form are used to characterize their completely unitary invariants. Therefore in this paper, we isolate this class of operators including $\mathcal{FB}_{2}(\Omega)$ and call them possess a strongly flag structure.

%相似的结论
Unfortunately, however, an invertible operator can not induce an isometric bundle map. This is the reason why finding similar invariants is more tricker. So far, there have been only some partial results (cf. \cite{HJK}, \cite{Kwon1}, \cite{Tr}) and how to make use of the curvature to determine when two Cowen-Douglas operators are similar is still unclear.

The similarity of operators is also related to the problem of weak homogeneity of operators, which is an extension and generalization of homogeneous operators.
%Let M$\ddot{o}$b denote the group of all biholomorphic automorphisms of the unit disc $\mathbb{D}$. An operator $T$ in $\mathcal{L}(\mathcal{H})$ is said to be homogeneous if $\sigma(T)\subseteq\bar{\mathbb{D}}$ and $\phi(T)$ is unitarily equivalent to $T$ for all $\phi$ in M$\ddot{o}$b.
There has been rich results on the homogeneous operators such as \cite{BM1}, \cite{BM2}, \cite{BM}, \cite{Misra}, \cite{AM}, \cite{Wilkins}, \cite{AG}.
%In the definition of homogeneous operators, one can see the weakly homogeneous operators by replacing unitary equivalence by similarity, that is, $\phi(T)$ is similar to $T$ for all $\phi$ in M$\ddot{o}$b(see \cite{BM1}).
In contrast, few conclusions have been drawn about weakly homogeneous operators.
N. Zorboska determined that the operator $M^{*}_{z}$ on the reproducing kernel Hilbert space $(\mathcal{H},K_{(\gamma)})$ is a weakly homogeneous operator in $\mathcal{B}_{1}(\mathbb{D})$(see \cite{CM},\cite{NZ}, or \cite{Ghara}). %where $K_{(\gamma)}(z,w):=\sum\limits_{n=0}^{\infty}(n+1)^{\gamma}z^{n}\bar{w}^{n},\ z,w\in\mathbb{D}$
Then given a sharp non-negative kernel, in \cite{Ghara}, S. Ghara characterized weakly homogeneous operators in $\mathcal{B}_{1}(\mathbb{D})$.
%Then in \cite{Ghara}, S.Ghara characterized that for a sharp non-negative definite kernel $K:\mathbb{D}\times\mathbb{D}\rightarrow\mathcal{M}_{k}(\mathbb{C})$, the multiplication operator $M_{z}$ on $(\mathcal{H},K)$ is a weakly homogeneous operator if and only if there exists a bounded invertible weighted composition operator $M_{g_{\phi}}C_{\phi^{-1}}$ acting on the same Hilbert space, where $g_{\phi}\in Hol(\mathbb{D},GL_{k}(\mathbb{C}))$ and $\phi\in \mbox{M}\ddot{o}\mbox{b}$.
Furthermore, according to the results in \cite{JJKM}, S. Ghara found a class of weakly homogeneous operators in the class $\mathcal{FB}_{2}(\mathbb{D})$ and gave non-examples of weakly operators in the same class(see \cite{Ghara}). Recently, B. Z. Hou et al. in \cite{HJ} introduced the growth types of weighted Hardy spaces and proved that on a weighted Hardy spaces of polynomial growth, the multiplication operator $M_{z}$ is a weakly homogeneous operator.
%And they also pointed that $M_{z}$ could be not similar to $M_{\phi}$, for some $\phi\in\mbox{M$\ddot{o}$b}$, on a weighted Hardy space of intermediate growth.

The present paper is structured as follows. Firstly, we recall some fundamental concepts and basic results in section 2. Subsequently, we introduce a subclass of Cowen-Douglas operators $\mathcal{OFB}_{n}(\Omega)$, which is said to possess a strongly flag structure, and we demonstrate that this class is norm dense in $\mathcal{B}_{n}(\Omega)$ up to similarity.
During this process, we find that an operator $T\in\mathcal{FB}_{n}(\Omega)$ satisfying certain condition (denoted by Condition (A)) is similar to an operator $\tilde{T}$ whose non-zero elements only on the primary and secondary diagonals and are same with $T$. Moreover, the similarity invariants for a large class of operators in $\mathcal{FB}_{n}(\Omega)$ is given in section 4. Lastly, we obtain a sufficient and necessary condition for a large class of weakly homogeneous operators.

\section{Preliminaries}

In order to explain our main theorem clearly, we will introduce the following
notations and definitions.

\subsection{The class $\mathcal{B}_{n}(\Omega)$}
Let $n$ be a positive integer. The class of Cowen-Douglas operators with index $n$ introduced in \cite{CD} is defined as follows:
$$\begin{array}{lll}
\mathcal{B}_{n}(\Omega):=\{T\in\mathcal{L}(\mathcal{H}):
&(1)\,\,\Omega\subset\sigma(T):=\{w\in\mathbb{C}:\ T-w\ \mbox{is\ not\ invertible}\},\\
&(2)\,\,\mbox{ran}(T-w)=\mathcal{H}\ \mbox{for\ all}~w\in\Omega,\\
&(3)\,\,\bigvee_{w\in \Omega}\ker(T-w)=\mathcal{H},\\
&(4)\,\,\mbox{dim ker}(T-w)=n\ \mbox{for\ all}~w\in\Omega\}.
\end{array}$$
In the same paper(cf \cite{CD}), Cowen and Douglas showed that for an operator $T$ in $\mathcal{B}_{n}(\Omega)$, the mapping $z\mapsto\ker(T-w)$ defines a rank $n$ Hermitian holomorphic vector bundle $E_{T}$ on $\Omega$,
$$E_{T}=\{(w,x)\in\Omega\times\mathcal{H}:\ x\in \ker(T-w)\}\ \mbox{and}\ \pi:E_{T}\rightarrow\Omega,\ \mbox{where}\ \pi(w,x)=w.$$
The Gram matrix corresponding to a holomorphic frame $\{\gamma_{1},\cdots,\gamma_{n}\}$ for $E_{T}$ is defined by
$$h(w):=\big( \langle\gamma_{j}(w),\gamma_{i}(w)\rangle\big)_{i,j=1}^{n},\ w\in\Omega.$$
Then the definition of the curvature function is given by
$$\mathcal{K}(w):=\frac{\partial}{\partial\bar{w}}\big(h^{-1}\frac{\partial h}{\partial w}\big).$$
In particular, when $T\in\mathcal{B}_{1}(\Omega)$, the curvature function of $T$ can also be written as $$\mathcal{K}(w)=-\frac{\partial^{2}}{\partial \bar{w}\partial w}\log\Vert\gamma(w)\Vert^{2},$$
where $\gamma(w)$ is a non-vanishing holomorphic cross-section of the bundle $E_{T}$. For any $C^{\infty}$-bundle map $\phi$ on the Hermitian holomorphic vector bundle $E$, and given a holomorphic frame $\gamma$, we have
\begin{enumerate}
\item $\phi_{\bar{w}}(\gamma)=\frac{\partial}{\partial\bar{w}}(\phi(\gamma))$,

\item $\phi_{w}(\gamma)=\frac{\partial}{\partial w}(\phi(\gamma))+[h^{-1}\frac{\partial}{\partial w}h,\phi(\gamma)]$.
\end{enumerate}
The covariant partial derivatives of curvature, denoted by $\mathcal{K}_{w^{i},\bar{w}^{j}},\ i,j\in\mathbb{N}\cup\{0\}$, can be obtained by using the inductive formulas above due to  the fact that curvature can be viewed as a bundle map. Then M. J. Cowen and R. G. Douglas gave the answer to the problem of unitary classification of operators in terms of curvature and its covariant partial derivatives.
\begin{thm}\cite{CD}
Let $E_{T}$ and $E_{S}$ be two Hermitian holomorphic vector bundles induced by two Cowen-Douglas operators $T$ and $S$ with index $n$, respectively. Then $E_{T}\sim_{u}E_{S}$ if and only if there exists an isometry $V:E_{T}\rightarrow E_{S}$ such that
$$V\mathcal{K}_{T,w^{i},\bar{w}^{j}}=\mathcal{K}_{S,w^{i},\bar{w}^{j}}V,\ 0\le i\le j\le i+j\le n,\ (i,j)\ne(0,n),(n,0).$$
\end{thm}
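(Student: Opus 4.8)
The plan is to prove the two implications separately, dispatching necessity quickly and concentrating the effort on sufficiency, which I would reduce to a local statement about germs of Hermitian holomorphic bundles at one point.

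\emph{Necessity.} Suppose $E_{T}\sim_{u}E_{S}$, so there is a holomorphic bundle isomorphism $V\colon E_{T}\to E_{S}$ that is isometric on each fibre and respects the metrics (it is the restriction to the eigenbundles of the unitary $U$ intertwining $T$ and $S$). The curvature $\mathcal{K}$ and all its covariant derivatives $\mathcal{K}_{w^{i},\bar{w}^{j}}$ are manufactured intrinsically from the Hermitian metric $h$ and the holomorphic structure through the rules (1)--(2) recalled above and the definition $\mathcal{K}=\partial_{\bar{w}}(h^{-1}\partial_{w}h)$. A map preserving both the metric and holomorphy therefore intertwines every such bundle endomorphism, so $V\mathcal{K}_{T,w^{i},\bar{w}^{j}}=\mathcal{K}_{S,w^{i},\bar{w}^{j}}V$ holds for all $i,j$, in particular on the listed range; thus this $V$ witnesses the right-hand side.

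\emph{Sufficiency.} Here I would first reduce to a germ statement: since $\Omega$ is connected, a holomorphic isometric isomorphism produced on any nonempty open set propagates to all of $\Omega$ by analytic continuation (holomorphic maps into the unitary group are constant, so the local isometries patch), and it suffices to build one near a fixed $w_{0}$, normalised to $w_{0}=0$. The model is the rank-one case already known from \cite{CD}: from $\mathcal{K}=-\partial_{w}\partial_{\bar{w}}\log\|\gamma\|^{2}$, equality of curvatures makes $\log(\|\gamma\|^{2}/\|\tilde\gamma\|^{2})$ harmonic, hence locally the real part of a holomorphic function, and exponentiating half of it gives a holomorphic factor $\phi$ with $|\phi|^{2}=\|\gamma\|^{2}/\|\tilde\gamma\|^{2}$ realising the isometry $\gamma\mapsto\phi\tilde\gamma$. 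For general $n$ I would choose \emph{normal holomorphic frames} $\gamma$ for $E_{T}$ and $\tilde\gamma$ for $E_{S}$ near $0$ in which the Gram matrices $h,\tilde h$ are normalised as far as a holomorphic change of frame allows (say $h(0)=I$ with the pure holomorphic derivatives of $h$ annihilated at $0$). Applying the covariant-derivative rules (1)--(2) repeatedly then yields recursion formulas expressing every Taylor coefficient $\partial_{w}^{a}\partial_{\bar{w}}^{b}h(0)$ as a universal polynomial in the curvature jets $\mathcal{K}_{T,w^{i},\bar{w}^{j}}(0)$, the matrix analogue of the rank-one Poincaré-lemma step. Granting these recursions, the hypothesis that $V$ intertwines the curvature jets forces $h$ and $\tilde h$ to have identical jets after the constant change of frame $V(0)$; real-analyticity of the metrics upgrades equal $\infty$-jets to agreement on a neighbourhood, delivering the local isometry, which then globalises as above.

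The step I expect to be the genuine obstacle is the finiteness: showing that matching only the jets with $i+j\le n$, and omitting the two pure top-order derivatives $(0,n)$ and $(n,0)$, already pins down the full $\infty$-jet of the metric. This is exactly where the rank-$n$ structure must be exploited. I would argue through the spanning hypothesis $\bigvee_{w}\ker(T-w)=\mathcal{H}$: the successive derivatives of a holomorphic frame at $0$ exhaust $\mathcal{H}$ after an order comparable to $n$, so the higher covariant derivatives of the curvature cannot be independent of those of order $\le n$ but are forced by them, while the two excluded pure derivatives are recovered from a conjugate-symmetry of $\{\mathcal{K}_{w^{i},\bar{w}^{j}}\}$ together with the lower-order data. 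Turning this dimension count into precise recursion identities with the sharp bound $n$ is the technical heart of the theorem and the part I would expect to occupy most of the work.
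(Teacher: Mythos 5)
You should be aware at the outset that the paper contains no proof of this statement: it is quoted as background from Cowen--Douglas \cite{CD} (it is their classification theorem), so your attempt can only be measured against the argument in \cite{CD} itself. Your necessity direction is essentially correct and is indeed routine: a holomorphic isometric bundle isomorphism intertwines the curvature and all of its covariant derivatives, since these are constructed intrinsically from the metric and the holomorphic structure.

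The sufficiency direction, however, has a genuine gap, and it sits exactly where you place ``the technical heart''. The entire content of the theorem is twofold: (a) a fibrewise, a priori non-holomorphic, isometry intertwining finitely many covariant derivatives of the curvature can be upgraded to a holomorphic isometric bundle map; and (b) total order at most $n$, with $(0,n)$ and $(n,0)$ omitted, suffices. Your sketch defers both points to ``recursion formulas'' that are never written down, so nothing is actually proved; worse, the one heuristic you do offer for (b) is false. You claim that the derivatives of a holomorphic frame at $0$ ``exhaust $\mathcal{H}$ after an order comparable to $n$''. They cannot: $\mathcal{H}$ is infinite-dimensional, while the vectors $\partial^{k}\gamma_{i}(0)$, $1\le i\le n$, $0\le k\le n$, span a subspace of dimension at most $n(n+1)$. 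The spanning hypothesis $\bigvee_{w\in\Omega}\ker(T-w)=\mathcal{H}$ yields density only when derivatives of all orders (equivalently, all base points) are used, so it cannot by itself produce the finite bound $n$; in \cite{CD} that bound comes from a different mechanism, namely that for bundles arising from Cowen--Douglas operators (generalized Bergman kernels) an isometry intertwining the listed derivatives is shown to intertwine the covariant derivatives of all orders, after which the classical uniqueness statement for real-analytic metrics with matching jets in normal frames applies. A secondary soft spot: your globalization step asserts that local isometries patch because ``holomorphic maps into the unitary group are constant'', but the relevant transition maps are unitary with respect to the bundle metric $h$, not a constant inner product, so that argument does not apply as stated; the globalization in \cite{CD} instead passes through the operators, since a local equivalence of $E_{T}$ and $E_{S}$ already forces $T\sim_{u}S$, which in turn gives the equivalence over all of $\Omega$.
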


\subsection{The second fundamental form}
The computation of the second fundamental form is given below following \cite{JJKM}. Let $E_{T}$ be the vector bundle of rank $2$ corresponding to the operator $T=\begin{pmatrix}T_{1,1}&T_{1,2}\\0&T_{2,2}\end{pmatrix}$ in $\mathcal{FB}_{2}(\Omega)$ and $E_{1}$ be the line bundle corresponding to the operator $T_{1,1}$. Suppose $\{\gamma_{1},\gamma_{2}\}$ be a holomorphic frame for $E_{T}$ such that $\gamma_{1}$ and $\frac{\partial}{\partial w}\gamma_{1}-\gamma_{2}$ are orthogonal. Denote by $\{e_{1},e_{2}\}$ an orthogonal frame from the holomorphic frame $\{\gamma_{1},\gamma_{2}\}$ by the Gram-Schmidt process: Set $h=\langle\gamma_{1},\gamma_{1}\rangle$, then
$$e_{1}=h^{-\frac{1}{2}}\gamma_{1},\ \ e_{2}=\frac{\gamma_{2}-\frac{\gamma_{1}\langle\gamma_{2},\gamma_{1}\rangle}{\Vert \gamma_{1}\Vert^{2}}}{(\Vert\gamma_{2}\Vert^{2}-\frac{|\langle\gamma_{2},\gamma_{1}\rangle|^{2}}{\Vert\gamma_{1}\Vert^{2}})^{\frac{1}{2}}}
%=\frac{\gamma_{2}-\frac{\gamma_{1}\langle\gamma_{2},\gamma_{1}\rangle}{\Vert \gamma_{1}\Vert^{2}}}{h^{\frac{1}{2}}(\bar{\partial}\partial logh)^{\frac{1}{2}}}
.$$
And one can obtain
$$\bar{\partial}e_{1}=-\frac{1}{2}h^{-\frac{3}{2}}\bar{\partial}h\gamma_{1}=-\frac{1}{2}h^{-1}\bar{\partial}he_{1}
=-\frac{1}{2}\bar{\partial}(\log h)e_{1},$$
$$\bar{\partial}e_{2}=-h^{\frac{1}{2}}\frac{\bar{\partial}(h^{-1}\langle\gamma_{2},\gamma_{1}\rangle)}
{(\Vert\gamma_{2}\Vert^{2}-\frac{|\langle\gamma_{2},\gamma_{1}\rangle|^{2}}{\Vert\gamma_{1}\Vert^{2}})^{\frac{1}{2}}}e_{1}--\frac{\bar{\partial}(\Vert\gamma_{2}\Vert^{2}-\frac{\langle\gamma_{2},\gamma_{1}\rangle}{\Vert\gamma_{1}\Vert^{2}})}{(\Vert\gamma_{2}\Vert^{2}-\frac{|\langle\gamma_{2},\gamma_{1}\rangle|^{2}}{\Vert\gamma_{1}\Vert^{2}})^{\frac{1}{2}}}e_{2}.$$

The canonical hermitian connection $D$ for the vector bundle $E_{T}$ is given in terms of $e_{1}$ and $e_{2}$ by the formula:
\begin{equation*}
\begin{aligned}
De_{1}&=D^{1,0}e_{1}+D^{0,1}e_{1}\\
&=\alpha_{11}e_{1}+\alpha_{21}e_{2}+\bar{\partial}e_{1}\\
&=\Big(\alpha_{11}-\frac{1}{2}\bar{\partial}(\log h)\Big)e_{1}+\alpha_{21}e_{2}\\
&=\theta_{11}e_{1}+\theta_{21}e_{2},
\end{aligned}
\end{equation*}
where $\alpha_{11}$ and $\alpha_{21}$ are $(1,0)$ forms to be determined. Similarly, we can also obtain
\begin{equation*}
\begin{aligned}
De_{2}&=D^{1,0}e_{1}+D^{0,1}e_{1}\\
&=\alpha_{12}e_{1}+\alpha_{22}e_{2}+\bar{\partial}e_{2}\\
&=\Big(\alpha_{12}-h^{\frac{1}{2}}\frac{\bar{\partial}(h^{-1}\langle\gamma_{2},\gamma_{1}\rangle)}
{(\Vert\gamma_{2}\Vert^{2}-\frac{|\langle\gamma_{2},\gamma_{1}\rangle|^{2}}{\Vert\gamma_{1}\Vert^{2}})^{\frac{1}{2}}}\Big)e_{1}
+\Big(\alpha_{22}-\frac{1}{2}\frac{\bar{\partial}(\Vert\gamma_{2}\Vert^{2}-\frac{\langle\gamma_{2},\gamma_{1}\rangle}{\Vert\gamma_{1}\Vert^{2}})}{(\Vert\gamma_{2}\Vert^{2}-\frac{|\langle\gamma_{2},\gamma_{1}\rangle|^{2}}{\Vert\gamma_{1}\Vert^{2}})}\Big)e_{2}\\
&=\theta_{12}e_{1}+\theta_{22}e_{2},
\end{aligned}
\end{equation*}
where $\alpha_{12}$ and $\alpha_{22}$ are $(1,0)$ forms to be determined. Following the compatibility of the connection with the Hermitian metric, one can see that
$$\langle De_{i},e_{j}\rangle+\langle e_{i},De_{j}\rangle=\theta_{ji}+\bar{\theta}_{ij}=0,\ \mbox{for}\ \ 1\le i,j\le2.$$
Then we get $\alpha_{11}=\frac{1}{2}\partial(\log h)$, $\alpha_{12}=0$, $\alpha_{21}=h^{\frac{1}{2}}\frac{\bar{\partial}(h^{-1}\langle\gamma_{2},\gamma_{1}\rangle)}
{(\Vert\gamma_{2}\Vert^{2}-\frac{|\langle\gamma_{2},\gamma_{1}\rangle|^{2}}{\Vert\gamma_{1}\Vert^{2}})^{\frac{1}{2}}}$ and $\alpha_{22}=\frac{1}{2}\frac{\bar{\partial}(\Vert\gamma_{2}\Vert^{2}-\frac{\langle\gamma_{2},\gamma_{1}\rangle}{\Vert\gamma_{1}\Vert^{2}})}{(\Vert\gamma_{2}\Vert^{2}-\frac{|\langle\gamma_{2},\gamma_{1}\rangle|^{2}}{\Vert\gamma_{1}\Vert^{2}})}$.
Therefore, the second fundamental form for the inclusion $E_{1}\subset E_{T}$ is given by the formula:
$$\theta_{12}=-h^{\frac{1}{2}}\frac{\bar{\partial}(h^{-1}\langle\gamma_{2},\gamma_{1}\rangle)}
{(\Vert\gamma_{2}\Vert^{2}-\frac{|\langle\gamma_{2},\gamma_{1}\rangle|^{2}}{\Vert\gamma_{1}\Vert^{2}})^{\frac{1}{2}}}
=-\frac{\frac{\partial^{2}}{\partial\bar{w}\partial w}\log h d\bar{w}}{\big(\frac{\Vert t_{1}\Vert^{2}}{\Vert\gamma_{1}\Vert^{2}}+\frac{\partial^{2}}{\partial\bar{w}\partial w}\log h\big)^{\frac{1}{2}}},$$
where $t_{1}=\partial\gamma_{1}-\gamma_{2}$. If $t_{2}$ is a non-vanishing holomorphic section of the vector bundle $E_{2}$ corresponding to the operator $T_{2,2}$, then $T_{1,2}(t_{2})$ is a holomorphic frame of $E_{1}$. Then the second fundamental form $\theta_{12}$ of the inclusion $E_{1}\subset E_{T}$ equals to
$$-\frac{\frac{\partial^{2}}{\partial \bar{w}\partial w}\log\Vert T_{1,2}(t_{2})\Vert^{2}d\bar{w}}{\Big(\frac{\Vert T_{1,2}(t_{2})\Vert^{2}}{\Vert t_{2}\Vert^{2}}+\frac{\partial^{2}}{\partial\bar{w}\partial w}\log\Vert T_{1,2}(t_{2})\Vert^{2}\Big)^{\frac{1}{2}}}.$$

For $T\in\mathcal{FB}_{n}(\Omega)$ with the upper-triangular matrix form given by Definition \ref{FBn}, the corresponding second fundamental form $\theta_{i,i+1}(T)$ of $E_{T_{i,i}}$ is given by the formula:
$$\theta_{i,i+1}(T)(w)=\frac{\mathcal{K}_{T_{i,i}}(w)d\bar{w}}{\Big(\frac{\Vert T_{i,i+1}t_{i+1}(w)\Vert^{2}}{\Vert t_{i+1}(w)\Vert^{2}}-\mathcal{K}_{T_{i,i}}(w)\Big)^{\frac{1}{2}}},$$
where $t_{i+1}$ is a non-vanishing section of $E_{T_{i+1,i+1}}$. For two arbitrary operators $T$ and $\tilde{T}$ in $\mathcal{FB}_{n}(\Omega)$ with $\mathcal{K}_{T_{i,i}}=\mathcal{K}_{\tilde{T}_{i,i}}$, we have
$$\theta_{i,i+1}(T)(w)=\theta_{i,i+1}(\tilde{T})(w)\ \mbox{if\ and\ only\ if}\ \frac{\Vert T_{i,i+1}(t_{i+1}(w))\Vert^{2}}{\Vert t_{i+1}(w)\Vert^{2}}=\frac{\Vert\tilde{T}_{i,i+1}(\tilde{t}_{i+1}(w))\Vert^{2}}{\Vert \tilde{t}_{i+1}(w)\Vert^{2}}.$$
Consequently, we can also use $\frac{\Vert T_{i,i+1}(t_{i+1}(w))\Vert^{2}}{\Vert t_{i+1}(w)\Vert^{2}}$ instead of  $\theta_{i,i+1}(T)$.

\subsection{Reproducing kernel Hilbert spaces}
Let $\mathcal{H}$ be a separable Hilbert space consisting of $\mathbb{C}$-valued holomorphic functions over $\Omega$ where $\Omega$ is a subset in $\mathbb{C}$. The function $K:\Omega\times\Omega\rightarrow\mathbb{C}$ is called a reproducing kernel of $\mathcal{H}$ if for every $w\in\Omega$,
\begin{enumerate}
\item $K(\cdot,w)$ as a holomorphic function belongs to $\mathcal{H}$.
\item The reproducing property: for every $ f\in\mathcal{H}$,
    $$\langle f,K(\cdot,w)\rangle_{\mathcal{H}}= f(w).$$

\end{enumerate}
Equivalently, if $\{e_{n}\}_{n=0}^{\infty}$ is an orthonormal basis of $\mathcal{H}$, then the reproducing kernel function $K$ can also be defined by $$K(z,w)=\sum\limits_{n=0}^{\infty}e_{n}(z)e_{n}(w)^{*}.$$
Clearly, $\mathcal{H}=\overline{\mbox{span}}\{K(\cdot,\bar{w}):\ w\in\Omega\}$. We always assume that the kernel $K(z,w)$ is sesqui-analytic, that is, it is holomorphic in $z$ and anti-holomorphic in $w$.

A function $K:\Omega\times\Omega\rightarrow\mathbb{C}$ is said to be a positive definite kernel if for any subset $\{w_{1},\cdots,w_{n}\}$ of $\Omega$ and for any complex numbers $a_{1},\cdots,a_{n}$,
$$\sum\limits_{i,j=1}^{n}\bar{a_{i}}a_{j}K(w_{i},w_{j})\ge0.$$
Generally, a reproducing kernel $K(z,w)=\sum\limits_{n=0}^{\infty}a_{n}z^{n}\bar{w}^{n}$ is positive if $a_{n}>0$ for all $n\ge0$. And we could know $e_{n}(z)=\sqrt{a_{n}}z^{n}, n\ge0$, is an orthonormal basis of the Hilbert space.

\subsection{Multipliers}Denote by $\mbox{Hol}(\mathbb{D})$ the space of all analytic functions on the unit open disc $\mathbb{D}$. For two Hilbert spaces $\mathcal{H}_{1}$ and $\mathcal{H}_{2}$ consisting of holomorphic functions on the unit disc $\mathbb{D}$, the multiplier algebra Mult$(\mathcal{H}_{1},\mathcal{H}_{2})$ is defined as
$$\mbox{Mult}(\mathcal{H}_{1},\mathcal{H}_{2}):=\{\psi\in \mbox{Hol}(\mathbb{D}):\ \psi f\in\mathcal{H}_{2}\ \mbox{whenever}\ f\in\mathcal{H}_{1}\}.$$
When $\mathcal{H}_{1}=\mathcal{H}_{2}$, we write $\mbox{Mult}(\mathcal{H}_{1})$ instead of Mult$(\mathcal{H}_{1},\mathcal{H}_{2})$.

Using the closed graph theorem, it is easy to see that $\psi\in\mbox{Mult}(\mathcal{H}_{1},\mathcal{H}_{2})$ if and only if the multiplication operator $M_{\psi}$ is bounded from $\mathcal{H}_{1}$ to $\mathcal{H}_{2}$. Generally speaking, $\mbox{Mult}(\mathcal{H}_{1},\mathcal{H}_{2})$, or $\mbox{Mult}(\mathcal{H}_{1})$, is a proper subset of $\mbox{H}^{\infty}(\mathbb{D})$, where $\mbox{H}^{\infty}(\mathbb{D})$ is the algebra of all bounded holomorphic functions on the unit disc $\mathbb{D}$, such as the multiplier algebra on the Dirichlet space. And there are also some spaces whose multiplier algebras are equal to $\mbox{H}^{\infty}(\mathbb{D})$, such as the Hardy space and the Bergman space.

\begin{lem}\cite{Richter},\cite{shields}
Let $M_{i,z}$ be the bounded multiplication operator on the reproducing kernel Hilbert space $\mathcal{H}_{K_{i}},\ i=1,2$. Then an operator $T\in\mathcal{L}(\mathcal{H}_{K_{1}},\mathcal{H}_{K_{2}})$ satisfies $TM_{1,z}=M_{2,z}T$ if and only if there is a function $\psi\in\mbox{Mult}(\mathcal{H}_{K_{1}},\mathcal{H}_{K_{2}})$ such that $T=M_{\psi}$.
\end{lem}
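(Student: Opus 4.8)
The plan is to prove the two implications separately, with the forward (``if'') direction being immediate and the converse (``only if'') carrying all the content through the eigenvector structure of the adjoint multiplication operators. First I would dispose of the easy direction: if $T=M_{\psi}$ for some $\psi\in\mbox{Mult}(\mathcal{H}_{K_{1}},\mathcal{H}_{K_{2}})$, then for every $f\in\mathcal{H}_{K_{1}}$ one has $(TM_{1,z}f)(z)=\psi(z)\,zf(z)=z\,\psi(z)f(z)=(M_{2,z}Tf)(z)$, so $TM_{1,z}=M_{2,z}T$ by commutativity of pointwise multiplication, while boundedness of $T$ is exactly the multiplier hypothesis.

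The substance is the converse. The key observation I would use is that reproducing kernels are eigenvectors for the adjoints of the multiplication operators: for each $w\in\mathbb{D}$ and $i=1,2$,
$$M_{i,z}^{*}K_{i}(\cdot,w)=\bar{w}\,K_{i}(\cdot,w),$$
which follows from $\langle f,M_{i,z}^{*}K_{i}(\cdot,w)\rangle=\langle zf,K_{i}(\cdot,w)\rangle=wf(w)=\langle f,\bar{w}K_{i}(\cdot,w)\rangle$ for all $f$. Taking adjoints in the intertwining relation gives $M_{1,z}^{*}T^{*}=T^{*}M_{2,z}^{*}$, and applying both sides to $K_{2}(\cdot,w)$ yields
$$M_{1,z}^{*}\big(T^{*}K_{2}(\cdot,w)\big)=T^{*}M_{2,z}^{*}K_{2}(\cdot,w)=\bar{w}\,T^{*}K_{2}(\cdot,w),$$
so $T^{*}K_{2}(\cdot,w)\in\ker(M_{1,z}^{*}-\bar{w})$ for every $w$.

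The crucial structural input, and the step I expect to be the main obstacle, is that this eigenspace is one-dimensional, spanned by $K_{1}(\cdot,w)$. In the holomorphic reproducing-kernel setting at hand this holds because $\ker(M_{1,z}^{*}-\bar{w})=\big(\mathrm{ran}(M_{1,z}-w)\big)^{\perp}$, and $\overline{\mathrm{ran}(M_{1,z}-w)}=\{f:f(w)=0\}$ once the space is closed under division by $(z-w)$ at its zeros (equivalently, $M_{1,z}^{*}$ is a Cowen--Douglas operator in $\mathcal{B}_{1}(\mathbb{D})$); since $\{f:f(w)=0\}^{\perp}=\mathbb{C}\,K_{1}(\cdot,w)$, the eigenspace is indeed spanned by $K_{1}(\cdot,w)$. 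Granting this, there is a scalar $\overline{\psi(w)}$ with $T^{*}K_{2}(\cdot,w)=\overline{\psi(w)}\,K_{1}(\cdot,w)$.

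Finally I would recover $T$ as multiplication by $\psi$ and verify the regularity of $\psi$. For any $f\in\mathcal{H}_{K_{1}}$,
$$(Tf)(w)=\langle Tf,K_{2}(\cdot,w)\rangle=\langle f,T^{*}K_{2}(\cdot,w)\rangle=\psi(w)\,\langle f,K_{1}(\cdot,w)\rangle=\psi(w)f(w),$$
so $Tf=\psi f$ pointwise, i.e.\ $T=M_{\psi}$. Because $w\mapsto K_{i}(\cdot,w)$ is anti-holomorphic and $T^{*}$ is bounded, the coefficient $\overline{\psi(w)}$ is anti-holomorphic in $w$ (extract it by evaluating at a fixed $z_{0}$ with $K_{1}(z_{0},w)\neq0$), hence $\psi$ is holomorphic; and since $\psi f=Tf\in\mathcal{H}_{K_{2}}$ for every $f\in\mathcal{H}_{K_{1}}$ with $T$ bounded, we conclude $\psi\in\mbox{Mult}(\mathcal{H}_{K_{1}},\mathcal{H}_{K_{2}})$, which completes the proof.
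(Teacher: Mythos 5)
The paper itself gives no proof of this lemma at all---it is quoted from Richter and from Shields--Wallen---so there is nothing internal to compare against; your argument is the standard one from that cited literature, and it is essentially correct. The easy direction is fine, and in the converse you correctly reduce everything to the fact that $T^{*}$ maps $K_{2}(\cdot,w)$ into $\ker(M_{1,z}^{*}-\bar{w})$. The one genuine caveat is the step you yourself flag as the main obstacle: for a completely arbitrary reproducing kernel Hilbert space with bounded $M_{z}$, the eigenspace $\ker(M_{1,z}^{*}-\bar{w})$ need \emph{not} be one-dimensional---equivalently, $\overline{\mathrm{ran}(M_{1,z}-w)}$ can be strictly smaller than $\{f: f(w)=0\}$ (invariant subspaces of the Bergman shift of index bigger than one furnish examples)---so your proof really does require the division property you ``grant,'' i.e.\ that $M_{1,z}^{*}$ lies in $\mathcal{B}_{1}(\mathbb{D})$. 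This is harmless in the present context: the paper only invokes the lemma for diagonal kernels $K_{i}(z,w)=\sum_{k}a_{i,k}z^{k}\bar{w}^{k}$ with $a_{i,k}>0$, for which $M_{i,z}^{*}$ is a backward weighted shift and the coefficient recursion $c_{k+1}\Vert z^{k+1}\Vert^{2}=\bar{w}c_{k}\Vert z^{k}\Vert^{2}$ shows each eigenspace is exactly $\mathbb{C}K_{i}(\cdot,w)$. You should promote that hypothesis from a parenthetical remark to an explicit standing assumption (or restrict the statement to such spaces), since without it the ``only if'' direction is not a theorem; with it, your proof is complete and matches the argument in the sources the paper cites.
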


\section{Strongly flag structure}
In \cite{JJKM}, we have learnt that an operator in $\mathcal{B}_{n}(\Omega)$ with a flag structure refers to a set of $\frac{n(n-1)}{2}+1$ completely unitary invariants including the curvature and the second fundamental form. In particular, there exist operators that one can describe their completely unitary invariants in terms of the curvature and the second fundamental form. The proof of the following result is similar to Theorem 3.6 in \cite{JJKM}. For the completeness of the paper, we give the following proof.
\begin{thm}
Any two operators $T=\big((T_{i,j})\big)_{n\times n}$ and $\tilde{T}=\big((\tilde{T}_{i,j})\big)_{n\times n}$ in $\mathcal{FB}_{n}(\Omega)$, where $T_{i,j},\ \tilde{T}_{i,j}=0$ if $j-i\ge2$, are unitarily equivalent if and only if the following conditions hold:
\begin{enumerate}
\item $\mathcal{K}_{T_{n,n}}=\mathcal{K}_{\tilde{T}_{n,n}}$,

\item $\frac{\Vert t_{i}\Vert}{\Vert \tilde{t}_{i}\Vert}=\frac{\Vert t_{i+1}\Vert}{\Vert \tilde{t}_{i+1}\Vert}$, where $t_{i},\tilde{t}_{i}$, $1\le i\le n$, is a non-vanishing holomorphic section of $E_{T_{i,i}},\ E_{\tilde{T}_{i,i}}$, respectively, and $t_{i}(w)=T_{i,i+1}t_{i+1}(w),\ \tilde{t}_{i}(w)=\tilde{T}_{i,i+1}\tilde{t}_{i+1}(w)$ for $1\le i\le n-1$.
\end{enumerate}
\end{thm}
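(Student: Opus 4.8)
The plan is to prove both directions by exploiting the rigidity phenomenon recalled in the introduction: any unitary intertwining two operators of $\mathcal{FB}_n(\Omega)$ is block-diagonal, $U=U_1\oplus\cdots\oplus U_n$. I would also use repeatedly that two non-vanishing holomorphic sections of the same line bundle differ by a non-vanishing holomorphic scalar, and that a unitary intertwiner of two $\mathcal{B}_1(\Omega)$ operators sends $t_i(w)\mapsto\phi_i(w)\tilde t_i(w)$ with $\phi_i$ holomorphic (being the quotient of two holomorphic sections) and $|\phi_i(w)|=\|t_i(w)\|/\|\tilde t_i(w)\|$ by isometry.

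For the necessity of (1) and (2), I would start from a block-diagonal $U$ with $UT=\tilde TU$. Reading off the $(i,i)$ entries gives $U_iT_{i,i}=\tilde T_{i,i}U_i$, so $T_{i,i}\sim_u\tilde T_{i,i}$ in $\mathcal{B}_1(\Omega)$ and hence $\mathcal{K}_{T_{i,i}}=\mathcal{K}_{\tilde T_{i,i}}$ for all $i$ by the index-one Cowen--Douglas theorem; the case $i=n$ is (1). Reading off the $(i,i+1)$ entries gives $U_iT_{i,i+1}=\tilde T_{i,i+1}U_{i+1}$; evaluating this on $t_{i+1}(w)$ and using $t_i=T_{i,i+1}t_{i+1}$ and $\tilde t_i=\tilde T_{i,i+1}\tilde t_{i+1}$ collapses it to $\phi_i=\phi_{i+1}$, whose moduli give exactly (2).

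For the sufficiency, I would first upgrade (1) to equality of all diagonal curvatures: rewriting (2) as $\log\|t_i\|^2-\log\|\tilde t_i\|^2=\log\|t_{i+1}\|^2-\log\|\tilde t_{i+1}\|^2$ and applying $\partial\bar\partial$ shows that $\mathcal{K}_{T_{i,i}}-\mathcal{K}_{\tilde T_{i,i}}$ is independent of $i$, which (1) forces to vanish at $i=n$, hence everywhere. I would then build the intertwiner starting from the bottom block $U_n$: matching curvature of $T_{n,n}$ yields a unitary $U_n$ with $U_nt_n=\phi\,\tilde t_n$ for a non-vanishing holomorphic $\phi$ with $|\phi|=\|t_n\|/\|\tilde t_n\|$, and I would use this \emph{same} $\phi$ to define $U_it_i=\phi\,\tilde t_i$ on the spanning eigenvectors for every $i<n$.

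The crux is to check that each such $U_i$ is a genuine unitary and that the blocks are compatible. Matching curvature at level $i$ provides \emph{some} unitary intertwiner with holomorphic phase $\phi^{(i)}$ obeying $|\phi^{(i)}|=\|t_i\|/\|\tilde t_i\|$; by (2) this equals $\|t_n\|/\|\tilde t_n\|=|\phi|$, so $\phi/\phi^{(i)}$ is a unimodular constant and $t_i\mapsto\phi\,\tilde t_i$ is again a unitary intertwining $T_{i,i}$ and $\tilde T_{i,i}$. Evaluating the superdiagonal relation $U_iT_{i,i+1}=\tilde T_{i,i+1}U_{i+1}$ on $t_{i+1}(w)$ then returns $\phi(w)\tilde t_i(w)$ on both sides, so it holds; since the hypothesis $T_{i,j}=\tilde T_{i,j}=0$ for $j-i\ge 2$ means the diagonal and superdiagonal relations are the only ones to verify, $U=U_1\oplus\cdots\oplus U_n$ intertwines $T$ and $\tilde T$. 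I expect the main obstacle to be exactly this compatibility: the phases of the blockwise unitaries are a priori unrelated, and condition (2) is precisely what forces a single holomorphic $\phi$ to serve all blocks simultaneously, so that the locally chosen unitaries assemble into one block-diagonal intertwiner.
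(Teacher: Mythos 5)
Your proposal is correct and follows essentially the same route as the paper: necessity via the rigidity theorem (block-diagonal unitary, with the superdiagonal intertwining relations forcing a single holomorphic phase $\phi_i=\phi_{i+1}$ whose modulus gives condition (2)), and sufficiency by assembling a block-diagonal unitary all of whose blocks act on sections by one holomorphic function $\phi$ obtained from condition (1) and propagated to every level by condition (2). The only divergence is how unitarity of each block is established: the paper defines $U_i t_i(w)=\phi(w)\tilde{t}_i(w)$ directly and asserts it is an isometry with dense range, whereas you first upgrade (1) and (2) to curvature equality at every level via $\partial\bar{\partial}$, invoke the index-one Cowen--Douglas theorem blockwise, and correct the resulting phases $\phi^{(i)}$ by unimodular constants --- a minor variation that in fact supplies the justification the paper leaves implicit in its ``obviously an isometry'' step.
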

\begin{proof}
Suppose there exists an unitary operator $U$ such that $UT=\tilde{T}U$. By Theorem 3.5 in \cite{JJKM}, this $U$ must be diagonal, that is, $U=U_{1}\oplus\cdots\oplus U_{n}$, for some unitary operators $U_{i}$, $1\le i\le n$. Following the relationship $UT=\tilde{T}U$, we can see that $U_{i}T_{i,i}=\tilde{T}_{i,i}U_{i},\ 1\le i\le n$ and $U_{i}T_{i,i+1}=\tilde{T}_{i,i+1}U_{i},\ 1\le i\le n-1$. Then there exists some non-zero holomorphic function $\phi$ such that $$U_{i}(t_{i}(w))=\phi(w)\tilde{t}_{i}(w),\ w\in\Omega,\ 1\le i\le n.$$
Thus we can obtain
\begin{equation*}
\begin{aligned}
\mathcal{K}_{T_{n,n}}&=-\frac{\partial^{2}}{\partial\bar{w}\partial w}\log\Vert t_{n}(w)\Vert^{2}\\
&=-\frac{\partial^{2}}{\partial\bar{w}\partial w}\log\Vert U_{n}(t_{n}(w))\Vert^{2}\\
&=-\frac{\partial^{2}}{\partial\bar{w}\partial w}\log\Vert \phi(w)\tilde{t}_{n}(w)\Vert^{2}\\
&=\mathcal{K}_{\tilde{T}_{n,n}},
\end{aligned}
\end{equation*}
and $\frac{\Vert t_{i+1}\Vert}{\Vert t_{i}\Vert}=\frac{\Vert U_{i+1}t_{i+1}\Vert}{\Vert U_{i}t_{i}\Vert}=\frac{\Vert\phi\tilde{t}_{i+1}\Vert}{\Vert\phi\tilde{t}_{i}\Vert}=\frac{\Vert \tilde{t}_{i+1}\Vert}{\Vert \tilde{t}_{i}\Vert}$ for $1\le i\le n-1$.

On the other hand, assume that $T$ and $\tilde{T}$ are operators satisfying the conditions. Following the condition $\mathcal{K}_{T_{n,n}}=\mathcal{K}_{\tilde{T}_{n,n}}$,  there exists some non-zero holomorphic function $\phi$ defined on $\Omega$ such that
$\Vert t_{n}(w)\Vert=|\phi(w)|\Vert\tilde{t}_{n}(w)\Vert$. Then by the relationship $\frac{\Vert t_{i}\Vert}{\Vert \tilde{t}_{i}\Vert}=\frac{\Vert t_{i+1}\Vert}{\Vert \tilde{t}_{i+1}\Vert}$, $1\le i\le n-1$, we can obtain
$$\Vert t_{i}(w)\Vert=|\phi(w)|\Vert\tilde{t}_{i}(w)\Vert,$$ for $1\le i\le n$. We next define $U_{i}:\mathcal{H}_{i}\rightarrow\tilde{\mathcal{H}}_{i},\ 1\le i\le n$, as $$U_{i}(t_{i}(w))=\phi(w)\tilde{t}_{i}(w),\ w\in\Omega, $$
and extend to the linear span of those vectors. Obviously, for $1\le i\le n$, $U_{i}$ is an isometry from $\mathcal{H}_{i}$ to $\tilde{\mathcal{H}}_{i}$ and satisfies $U_{i}T_{i,i}=\tilde{T}_{i,i}U_{i}$ by a simple calculation. Then due to the range of non-zero operator intertwining two Cowen-Douglas operators with index 1 is dense, $U_{i},\ 1\le i\le n$, is an unitary operator. And it is easy to check that $U_{i}T_{i,i+1}=\tilde{T}_{i,i+1}U_{i}$ for $1\le i\le n-1$. Set $$U=\begin{pmatrix}U_{1}&0&0&\cdots&0\\ 0&U_{2}&0&\cdots&0\\ \vdots&\ddots&\ddots&\ddots&\vdots\\
0&\cdots&0&U_{n-1}&0\\
0&\cdots&\cdots&0&U_{n}\end{pmatrix},$$
then $U$ is an unitary operator intertwining $T$ and $\tilde{T}$.
\end{proof}

We isolate the operators in the above theorem and introduce a new class of Cowen-Douglas operators.
\begin{defn}\label{OFBn}
A Cowen-Douglas operator $T$ with index $n$ is said to be in the class $\mathcal{OFB}_{n}(\Omega)$, if $T\in\mathcal{FB}_{n}(\Omega)$ and $T$ has the following form on $\mathcal{H}=\mathcal{H}_{1}\oplus\cdots\oplus\mathcal{H}_{n}$,
$$T=\begin{pmatrix}
T_{1,1}&T_{1,2}&0&\cdots&0\\
0&T_{2,2}&T_{2,3}&\cdots&0\\
\vdots&\ddots&\ddots&\ddots&\vdots\\
0&\cdots&0&T_{n-1,n-1}&T_{n-1,n}\\
0&\cdots&\cdots&0&T_{n,n}
\end{pmatrix}.$$
\end{defn}
Obviously, this class of operators possesses a flag structure. Furthermore, an operator $T$ in $\mathcal{OFB}_{n}(\Omega)$ is said to \textbf{possess a strongly flag structure}, that is, the curvature together with the second fundamental form is a complete set of unitary invariants.
The operators in $\mathcal{FB}_{n}(\Omega)$ possess a flag structure, particularly when $n=2$, in which case they possess a strongly flag structure. Therefore this definition makes sense.

The following is the main result of this section.
\begin{thm}\label{main 1}
For an arbitrary operator $T$ in $\mathcal{B}_{n}(\Omega)$, there exist a sequence of operators $\{T_{n}\}_{n=1}^{\infty}\subseteq\mathcal{OFB}_{n}(\Omega)$ and a sequence of bounded and invertible operators $\{X_{n}\}_{n=1}^{\infty}$ such that $$\Vert T-X_{n}T_{n}X_{n}^{-1}\Vert\rightarrow0.$$
\end{thm}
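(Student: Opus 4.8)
The plan is to prove the statement in two stages: a norm--approximation stage that replaces $T$ by an operator lying in $\mathcal{FB}_n(\Omega)$, and a similarity--reduction stage that conjugates the approximant into $\mathcal{OFB}_n(\Omega)$ without disturbing its diagonal and super-diagonal data. For the first stage I would invoke the norm density of $\mathcal{CFB}_n(\Omega)$ in $\mathcal{B}_n(\Omega)$ established in \cite{JJ}: there is a sequence $S_k\in\mathcal{CFB}_n(\Omega)\subseteq\mathcal{FB}_n(\Omega)$ with $\|T-S_k\|\to 0$. Each $S_k$ is then upper-triangular with diagonal blocks $(S_k)_{i,i}\in\mathcal{B}_1(\Omega)$ and super-diagonal intertwiners satisfying the flag relation $(S_k)_{i,i}(S_k)_{i,i+1}=(S_k)_{i,i+1}(S_k)_{i+1,i+1}$. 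If it is not automatic from the definition of $\mathcal{CFB}_n(\Omega)$, I would arrange by a further arbitrarily small perturbation of the entries that each $S_k$ also satisfies Condition (A), keeping $\|T-S_k\|\to 0$.

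The core of the argument is the similarity--reduction step: every $S\in\mathcal{FB}_n(\Omega)$ satisfying Condition (A) is similar, through a bounded invertible operator of the form $X=I+N$ with $N$ strictly upper-triangular and supported on the entries $(i,j)$ with $j-i\ge 2$, to an operator in $\mathcal{OFB}_n(\Omega)$ having the same diagonal and super-diagonal blocks as $S$. I would carry this out by eliminating the off-band entries one band at a time, starting from distance $j-i=2$ and increasing. Writing $X^{-1}SX=S+[S,N]+(\text{higher commutators})$ and matching the block in position $(i,j)$, the elimination of $S_{i,j}$ reduces to solving a Sylvester/Rosenblum-type equation $S_{i,i}\,Q_{i,j}-Q_{i,j}\,S_{j,j}=R_{i,j}$, where $R_{i,j}$ is assembled from $S_{i,j}$ and from the solutions already produced at smaller distance. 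A weight count in which the block $(i,j)$ carries weight $j-i$ shows that $N$ contributes weight $\ge 2$, so the weight-$0$ and weight-$1$ parts of $X^{-1}SX$ are exactly $S_{i,i}$ and $S_{i,i+1}$; the diagonal and super-diagonal are therefore preserved, and with them the flag relation and the data $\mathcal{K}_{S_{n,n}}$ and $\theta_{i,i+1}$.

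The main obstacle is the bounded solvability of these Sylvester equations. Since $S_{i,i}$ and $S_{j,j}$ both have spectrum containing $\Omega$, their spectra overlap and $S_{i,i}Q-QS_{j,j}=R$ need not admit any bounded solution in general; this is exactly where Condition (A) enters. Realising the diagonal blocks as multiplication operators $M_z$ on reproducing kernel Hilbert spaces and applying the intertwiner lemma, Condition (A) should force each right-hand side $R_{i,j}$ to be multiplication by a function that is a genuine multiplier, so that every $Q_{i,j}$ is again a bounded multiplication operator and the band-by-band recursion closes up with a bounded, boundedly-invertible $X$. That $X^{-1}SX$ still lies in $\mathcal{B}_n(\Omega)$ is automatic, the index and the holomorphic-bundle structure being similarity invariants; one then only checks that its shape matches Definition \ref{OFBn}.

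Finally I would assemble the two stages: applying the reduction to each $S_k$ yields $T_k\in\mathcal{OFB}_n(\Omega)$ and a bounded invertible $X_k$ with $S_k=X_kT_kX_k^{-1}$, whence
$$\|T-X_kT_kX_k^{-1}\|=\|T-S_k\|\longrightarrow 0,$$
which is the assertion (after relabelling the sequence index). I expect the delicate points to be the uniform control needed to guarantee Condition (A) for the approximants, and the verification that the recursion over bands produces a genuinely bounded and invertible conjugator rather than merely a formal power series in $N$.
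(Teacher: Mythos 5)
Your first stage coincides with the paper's: approximate $T$ in norm by a sequence in $\mathcal{CFB}_{n}(\Omega)$ via the density theorem of \cite{JJ}, then conjugate each approximant into $\mathcal{OFB}_{n}(\Omega)$. One simplification you can make there: no perturbation is needed to secure Condition (A), because part (1) of Definition \ref{CFBn} already states that every entry of an operator in $\mathcal{CFB}_{n}(\Omega)$ has the form $\phi_{i,j}(T_{i,i})T_{i,i+1}\cdots T_{j-1,j}$ with $\phi_{i,j}(T_{i,i})\in\{T_{i,i}\}'$; Condition (A) is automatic for the approximants. Also, invertibility of the conjugator is never the delicate point: any strictly upper-triangular block operator is nilpotent, so $I+N$ is automatically invertible.

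The genuine gap is in your similarity-reduction step, and it appears already at band $j-i=2$. If $N$ is supported only on entries with $j-i\ge 2$, then matching the $(i,i+2)$ entry of $SX=XT$ (with $T$ the tridiagonal target having the same diagonal as $S$) gives exactly the Sylvester equation
$$S_{i,i}N_{i,i+2}-N_{i,i+2}S_{i+2,i+2}=-S_{i,i+2},$$
with no contribution from previously solved data. But Condition (A) combined with the flag relations forces $S_{i,i+2}=\phi_{i,i+2}(S_{i,i})S_{i,i+1}S_{i+1,i+2}$ to satisfy $S_{i,i}S_{i,i+2}=S_{i,i+2}S_{i+2,i+2}$, i.e.\ $S_{i,i+2}\in\ker\tau_{S_{i,i},S_{i+2,i+2}}$. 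So you are asking a nonzero element of the \emph{kernel} of the Sylvester map to lie in its \emph{range}; this is precisely the configuration that the Property (H) requirement $\ker\tau\cap\mathrm{ran}\,\tau=\{0\}$ rules out, and your multiplier heuristic points the wrong way: being multiplication by a multiplier is what places the right-hand side in $\ker\tau$, not in $\mathrm{ran}\,\tau$. A concrete counterexample: take all diagonal entries $M_{z}^{*}$ on the Hardy space, all superdiagonal entries $I$, and $S_{1,3}=I$ (this satisfies Condition (A) with $\phi_{1,3}=1$); your equation becomes $M_{z}^{*}N_{1,3}-N_{1,3}M_{z}^{*}=-I$, which has no bounded solution since the identity is never a commutator of bounded operators. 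The paper's Lemma \ref{mainlemma} (Lemma \ref{X.} for $n=3$) escapes this by letting $K$ have nonzero entries on the \emph{first} superdiagonal as well: those entries are themselves intertwiners, so the band-one equations remain homogeneous and the diagonal and superdiagonal of the conjugated operator are untouched, while the cross terms $K_{i,i+1}T_{i+1,i+2}\cdots$ absorb the unwanted entries. Condition (A) then lets every $K_{i,j}$ be written down in closed form (e.g.\ $K_{1,2}=T_{1,2}+\phi_{1,3}(T_{1,1})T_{1,2}$, $K_{1,3}=T_{1,3}$, $K_{2,3}=T_{2,3}$ when $n=3$), so no Sylvester equation with nonzero right-hand side ever has to be solved. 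Your architecture survives only after enlarging the support of $N$ in this way; as written, the elimination fails at its first nontrivial step.
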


To prove our main theorem, we need to the following concepts and lemmas.

Recall that the commutant $\{T\}'$ of $T$ on $\mathcal{H}$ is the set of operators in $\mathcal{L}(\mathcal{H})$ commuting with $T$.
\begin{defn}\cite{JJ}\label{CFBn}
A Cowen-Douglas operator $T$ with index $n$ is said to be in $\mathcal{CFB}_{n}(\Omega)$, if $T$ satisfies the following properties:
\begin{enumerate}
\item $T$ can be written as an $n\times n$ upper-triangular matrix form under a topological direct decomposition of $\mathcal{H}$ and diag$\{T\}:=T_{1,1}\dotplus T_{2,2}\dotplus\cdots\dotplus T_{n,n}\in\{T\}'$. Furthermore, each entry$$T_{i,j}=\phi_{i,j}(T_{i,i})T _{i,i+1}T_{i+1,i+2}\cdots T_{j-1,j}$$where $\phi_{i,j}(T_{i,i})\in\{T_{i,i}\}'$ for $1\le i\le n$;
\item $T_{i,i},T_{i+1,i+1}$ satisfy the Property($\mbox{H}$), that is, $\ker\tau_{T_{i,i},T_{i+1,i+1}}\cap \mbox{ran}\tau_{T_{i,i},T_{i+1,i+1}}=\{0\}$, where $\tau_{T_{i,i},T_{i+1,i+1}}(X)=T_{i,i}X-XT_{i+1,i+1}$ for $X\in\mathcal{L}(\mathcal{H})$ and $1\le i\le n-1$.
\end{enumerate}
\end{defn}

\begin{lem}\cite{JJ}
$\mathcal{CFB}_{n}(\Omega)$ is norm dense in $B_{n}(\Omega)$.
\end{lem}

Next, we will investigate the similarity problem between operators in $\mathcal{FB}_{n}(\Omega)$ and $\mathcal{OFB}_{n}(\Omega)$. Prior to that, for the sake of convenience, we present the following definition.
\begin{defn}[\textbf{Condition (A)}]
A Cowen-Douglas operator $T=\big((T_{i,j})\big)_{n\times n}\in\mathcal{FB}_{n}(\Omega)$  is said to satisfy the Condition (A), if  $T_{i,j}=\phi_{i,j}(T_{i,i})T_{i,i+1}T_{i+1,i+2}\cdots T_{j-1,j}$ where $\phi_{i,j}(T_{i,i})\in\{T_{i,i}\}'$ for $1\le i\le n$.
\end{defn}

\begin{lem}\cite{JJKM}\label{J.J.K.M}
If $X$ is an invertible operator intertwining two operators $T$ and $\tilde{T}$ from $\mathcal{FB}_{n}(\Omega)$, then $X$ and $X^{-1}$ are upper triangular.
\end{lem}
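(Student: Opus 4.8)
The plan is to show that $X$ carries the canonical flag of $T$-invariant subspaces $\mathcal{H}^{(k)}:=\mathcal{H}_{1}\oplus\cdots\oplus\mathcal{H}_{k}$ into the corresponding flag $\widetilde{\mathcal{H}}^{(k)}$ for $\tilde{T}$; since block-upper-triangularity of $X$ is precisely the collection of inclusions $X\mathcal{H}^{(k)}\subseteq\widetilde{\mathcal{H}}^{(k)}$, $1\le k\le n$, this is exactly the assertion. First I would record the Cowen-Douglas dictionary: because $X$ is invertible and $XT=\tilde{T}X$, it sends $\ker(T-w)$ isomorphically onto $\ker(\tilde{T}-w)$ and hence induces a holomorphic bundle isomorphism $E_{T}\to E_{\tilde{T}}$. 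The compression of $T$ to the invariant subspace $\mathcal{H}^{(k)}$ is the upper-left block $T^{(k)}\in\mathcal{B}_{k}(\Omega)$, whose eigenbundle is the holomorphic subbundle $w\mapsto\ker(T-w)\cap\mathcal{H}^{(k)}$; moreover $\mathcal{H}^{(k)}=\bigvee_{w\in\Omega}\big(\ker(T-w)\cap\mathcal{H}^{(k)}\big)$. Thus it suffices to prove that the induced bundle map takes each of these subbundles of $E_{T}$ into the matching subbundle of $E_{\tilde{T}}$.

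I would argue by induction on $n$, the case $n=1$ being vacuous. For the inductive step it is enough to establish the single statement that the bottom block-row of $X$ is diagonal, i.e. $X_{n,j}=0$ for $1\le j\le n-1$. Granting this, $X$ maps $\mathcal{H}^{(n-1)}$ into $\widetilde{\mathcal{H}}^{(n-1)}$, so its compression to these subspaces is an invertible operator intertwining $T^{(n-1)}$ and $\tilde{T}^{(n-1)}$, both of which lie in $\mathcal{FB}_{n-1}(\Omega)$; the inductive hypothesis then yields the upper-triangularity of $X$ on all of $\mathcal{H}$.

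To produce the vanishing of the bottom row I would expand $XT=\tilde{T}X$ entrywise. Using that both $T$ and $\tilde{T}$ are upper triangular, the $(n,j)$ entry collapses to $\sum_{k\le j}X_{n,k}T_{k,j}=\tilde{T}_{n,n}X_{n,j}$; in particular the corner block $X_{n,1}$ intertwines $T_{1,1}$ and $\tilde{T}_{n,n}$. The mechanism that forces such off-diagonal blocks to vanish is already visible in the $\mathcal{FB}_{2}(\Omega)$ model. There $X_{2,1}$ satisfies $X_{2,1}T_{1,1}=\tilde{T}_{2,2}X_{2,1}$, while the $(2,2)$ entry gives $X_{2,1}T_{1,2}=\tilde{T}_{2,2}X_{2,2}-X_{2,2}T_{2,2}\in\mbox{ran}\,\tau$, where $\tau(Y)=\tilde{T}_{2,2}Y-YT_{2,2}$. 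The flag relation $T_{1,1}T_{1,2}=T_{1,2}T_{2,2}$ now upgrades $X_{2,1}T_{1,2}$ to a genuine intertwiner, since $(X_{2,1}T_{1,2})T_{2,2}=X_{2,1}T_{1,1}T_{1,2}=\tilde{T}_{2,2}(X_{2,1}T_{1,2})$; hence $X_{2,1}T_{1,2}\in\ker\tau\cap\mbox{ran}\,\tau$. The same device, applied along the chain of intertwiners $T_{j,j+1}$ and organized as an induction down the flag, can be arranged to reduce the vanishing of every bottom-row block to an identity of this shape.

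The hard part, and the real content of the lemma, is to deduce $X_{2,1}T_{1,2}=0$ from the membership $X_{2,1}T_{1,2}\in\ker\tau\cap\mbox{ran}\,\tau$: this is exactly the transversality $\ker\tau\cap\mbox{ran}\,\tau=\{0\}$ for the adjacent $\mathcal{B}_{1}(\Omega)$ diagonal entries (the analogue of Property~(H)), which must be extracted from the Cowen-Douglas structure of the $T_{i,i}$ rather than from formal algebra; the entrywise bookkeeping alone leaves the off-diagonal blocks only constrained to be $\mathcal{B}_{1}(\Omega)$-intertwiners, and a nonzero such block is ruled out solely because the superdiagonal entries $T_{i,i+1}$ are nonzero. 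Once the transversality is available, $X_{2,1}T_{1,2}=0$; and since $T_{1,2}\neq0$ intertwines two index-one Cowen-Douglas operators, its range contains the nonvanishing section $w\mapsto T_{1,2}t_{2}(w)$ that spans the eigenbundle, hence is dense, so $X_{2,1}=0$. Finally, the assertion for $X^{-1}$ is immediate by symmetry: $X^{-1}$ is an invertible operator intertwining $\tilde{T}$ and $T$, again members of $\mathcal{FB}_{n}(\Omega)$, so the result just proved applies to it verbatim.
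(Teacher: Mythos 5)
First, a remark on the comparison itself: the paper does not prove this lemma at all — it is imported verbatim from \cite{JJKM} — so your proposal has to be judged on its own terms. Your framework is fine as far as it goes: upper-triangularity of $X$ is indeed the collection of flag inclusions $X\mathcal{H}^{(k)}\subseteq\widetilde{\mathcal{H}}^{(k)}$; the induction on the bottom block-row is legitimate (with the same statement for $X^{-1}$ used to make the restricted intertwiner invertible); the entrywise identities, the membership $X_{2,1}T_{1,2}\in\ker\tau\cap\mathrm{ran}\,\tau$ for $\tau(Y)=\widetilde{T}_{2,2}Y-YT_{2,2}$, and the dense-range argument upgrading $X_{2,1}T_{1,2}=0$ to $X_{2,1}=0$ are all correct.

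The gap is precisely the step you label ``the hard part'': you propose to close it with the transversality $\ker\tau\cap\mathrm{ran}\,\tau=\{0\}$, claimed to be extractable from the Cowen-Douglas structure of the diagonal entries alone. That claim is false, and no argument that (like yours) never invokes the invertibility of $X$ can possibly work, because non-invertible, non-upper-triangular intertwiners between operators in $\mathcal{FB}_{2}(\mathbb{D})$ exist. Concretely, let $\mathcal{H}_{K}$ be the reproducing kernel space with $K(z,w)=\sum_{n\ge0}(n+1)^{5}z^{n}\bar{w}^{n}$, let $S=M_{z}^{*}$ on $H^{2}$, $B=M_{z}^{*}$ on $\mathcal{H}_{K}$, and let $\iota,D,D_{2}:H^{2}\rightarrow\mathcal{H}_{K}$ be the inclusion, $f\mapsto f'$, and $f\mapsto f''$; all three are bounded for this choice of weights. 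Taking adjoints of $DM_{z}-M_{z}D=\iota$, $D_{2}M_{z}-M_{z}D_{2}=2D$ and $\iota M_{z}=M_{z}\iota$ gives $SD^{*}-D^{*}B=\iota^{*}$, $SD_{2}^{*}-D_{2}^{*}B=2D^{*}$ and $S\iota^{*}=\iota^{*}B$. Hence, with
$$T=\begin{pmatrix}B&I\\0&B\end{pmatrix},\qquad \widetilde{T}=\begin{pmatrix}S&I\\0&S\end{pmatrix},\qquad X=\begin{pmatrix}-D^{*}&-D_{2}^{*}\\ \iota^{*}&D^{*}\end{pmatrix},$$
both $T$ and $\widetilde{T}$ lie in $\mathcal{FB}_{2}(\mathbb{D})$, and one checks entrywise that $XT=\widetilde{T}X$, while $X_{2,1}=\iota^{*}\ne0$; here $X_{2,1}T_{1,2}=\iota^{*}$ is a nonzero element of $\ker\tau\cap\mathrm{ran}\,\tau$. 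So the transversality you need is simply not a property of pairs of $\mathcal{B}_{1}(\Omega)$ operators, and your argument, which uses invertibility only for bookkeeping, would wrongly conclude that this $X$ is upper triangular. This is also exactly why the paper treats Property (H) as a genuine additional hypothesis rather than an automatic fact: it is imposed as condition (2) in the definition of $\mathcal{CFB}_{n}(\Omega)$, and is verified in Lemma \ref{property H} only under the growth condition $\lim\limits_{n\rightarrow\infty}n\sqrt{a_{1,n}/a_{2,n}}=\infty$. Any correct proof of the present lemma must therefore make essential use of the fact that $X_{2,1}$ is a corner of a globally invertible intertwiner (this is the actual content of the argument in \cite{JJKM}), and that use is entirely absent from your proposal.
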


When $n$ is equal to 3, we establish the following conclusion.

\begin{lem}\label{X.}
Let $T=\begin{pmatrix}
T_{1,1}&T_{1,2}&0\\
0&T_{2,2}&T_{2,3}\\
0&0&T_{3,3}
\end{pmatrix}\ \mbox{and}\ \widetilde{T}=\begin{pmatrix}
T_{1,1}&T_{1,2}&T_{1,3}\\
0&T_{2,2}&T_{2,3}\\
0&0&T_{3,3}
\end{pmatrix}$ be two operators in $\mathcal{FB}_{3}(\Omega)$ satisfying the Condition (A).
%where $T_{1,3}=\phi_{1,3}(T_{1,1})T_{1,2}T_{2,3}$, $\phi_{1,3}\in\{T_{1,1}\}'$.
Then there exists a bounded operator $K$ such that $X=\textbf{I}+K$ is invertible, where $\textbf{I}$ means the identity operator, and $XT=\widetilde{T}X$.
\end{lem}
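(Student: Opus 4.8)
The plan is to produce $X$ explicitly rather than solve for it abstractly. By Lemma \ref{J.J.K.M}, any invertible operator intertwining two operators of $\mathcal{FB}_{3}(\Omega)$ is upper triangular, so it is natural to look for $X$ among upper-triangular operators with identity diagonal, i.e. of the form
$$X=\begin{pmatrix} \textbf{I} & A & B\\ 0 & \textbf{I} & C\\ 0 & 0 & \textbf{I}\end{pmatrix},\qquad K=X-\textbf{I}=\begin{pmatrix} 0 & A & B\\ 0 & 0 & C\\ 0 & 0 & 0\end{pmatrix}.$$
First I would expand the intertwining relation $XT=\widetilde{T}X$ block by block. The $(1,1),(2,2),(3,3)$ entries are automatic; the $(1,2)$ and $(2,3)$ entries reduce to $T_{1,1}A=AT_{2,2}$ and $T_{2,2}C=CT_{3,3}$, so $A$ and $C$ must intertwine the corresponding diagonal blocks; and the $(1,3)$ entry yields the single coupling equation
$$AT_{2,3}+BT_{3,3}=T_{1,1}B+T_{1,2}C+T_{1,3}.$$

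The heart of the argument is to exploit Condition (A) on $\widetilde{T}$, which gives the factorization $T_{1,3}=\phi_{1,3}(T_{1,1})T_{1,2}T_{2,3}$ with $\phi_{1,3}(T_{1,1})\in\{T_{1,1}\}'$. I would then simply set $A=\phi_{1,3}(T_{1,1})T_{1,2}$, $B=0$, and $C=0$. Two checks remain. First, $A$ genuinely intertwines $T_{2,2}$ and $T_{1,1}$: since $\phi_{1,3}(T_{1,1})$ commutes with $T_{1,1}$ and $T_{1,2}$ satisfies the flag relation $T_{1,1}T_{1,2}=T_{1,2}T_{2,2}$, one has $T_{1,1}A=\phi_{1,3}(T_{1,1})T_{1,1}T_{1,2}=\phi_{1,3}(T_{1,1})T_{1,2}T_{2,2}=AT_{2,2}$. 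Second, with $B=C=0$ the coupling equation collapses to $AT_{2,3}=T_{1,3}$, which is exactly the factorization above. Hence this choice satisfies every block equation. Moreover $A=\phi_{1,3}(T_{1,1})T_{1,2}$ is bounded, being a product of the bounded commutant element $\phi_{1,3}(T_{1,1})$ with the bounded operator $T_{1,2}$.

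Finally, since $K$ has its only nonzero block in position $(1,2)$, it is nilpotent with $K^{2}=0$, so $X=\textbf{I}+K$ is automatically invertible with $X^{-1}=\textbf{I}-K$, completing the construction. The step I expect to be the conceptual crux --- though it is short once set up --- is the decision to absorb $T_{1,3}$ into the $(1,2)$ block via the intertwiner $T_{1,2}$, rather than into the $(1,3)$ block $B$. Had I insisted on $A=C=0$, the coupling equation would become the Sylvester equation $T_{1,1}B-BT_{3,3}=-T_{1,3}$, whose solvability would demand that $T_{1,3}$ lie in the range of $\tau_{T_{1,1},T_{3,3}}$; this is \emph{not} available from Condition (A) alone (it is the kind of thing one would need Property (H) for). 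Condition (A) instead supplies a factorization of $T_{1,3}$ through $T_{1,2}$ and a commutant element of $T_{1,1}$, which is precisely what makes the direct construction go through, and what I expect to be the feature that generalizes to higher index.
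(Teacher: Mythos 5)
Your proof is correct, and its skeleton coincides with the paper's: take $X=\textbf{I}+K$ with $K$ strictly upper triangular, expand $XT=\widetilde{T}X$ into the same three block equations, and solve them using the Condition (A) factorization $T_{1,3}=\phi_{1,3}(T_{1,1})T_{1,2}T_{2,3}$. The only divergence is which particular solution of this underdetermined system is chosen. The paper sets $K_{2,3}=T_{2,3}$ and $K_{1,3}=T_{1,3}$ (each intertwines the appropriate diagonal entries, e.g.\ $T_{1,3}T_{3,3}=T_{1,1}T_{1,3}$ follows from Condition (A) and the flag relations), and the coupling equation then forces $K_{1,2}=T_{1,2}+\phi_{1,3}(T_{1,1})T_{1,2}$; you instead take $K_{1,3}=K_{2,3}=0$ and $K_{1,2}=\phi_{1,3}(T_{1,1})T_{1,2}$. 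Your choice is more economical: $K^{2}=0$, so $X^{-1}=\textbf{I}-K$ is immediate, whereas the paper's $K$ is only nilpotent of order three. What the paper's bulkier choice buys is compatibility with what comes next: Lemma \ref{mainlemma}, proved by induction with this lemma as the base case, asserts an intertwiner whose last column is exactly $K_{i,n}=T_{i,n}$, and the paper's solution here realizes precisely that form; under your solution the induction hypothesis would have to be restated (your idea of zeroing the last column and pushing everything into earlier columns plausibly also propagates, but it would be a different induction). Finally, your remark that insisting on $A=C=0$ leads to the Sylvester equation $T_{1,1}B-BT_{3,3}=-T_{1,3}$, which Condition (A) alone does not solve, correctly isolates why the factorization through $T_{1,2}$ is the essential mechanism.
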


\begin{proof}
Set $X:=\textbf{I}+K$, where $$K=\begin{pmatrix}
0&K_{1,2}&K_{1,3}\\
0&0&K_{2,3}\\
0&0&0
\end{pmatrix}.$$
%To find $K$, we need to solve the equation
%\begin{equation}
%(I+K)T=\widetilde{T}(I+K).
%\end{equation}
We consider the following equation
\begin{equation}
\begin{pmatrix}
\textbf{I}&K_{1,2}&K_{1,3}\\
0&\textbf{I}&K_{2,3}\\
0&0&\textbf{I}
\end{pmatrix}\begin{pmatrix}
T_{1,1}&T_{1,2}&0\\
0&T_{2,2}&T_{2,3}\\
0&0&T_{3,3}
\end{pmatrix}
=\begin{pmatrix}
T_{1,1}&T_{1,2}&T_{1,3}\\
0&T_{2,2}&T_{2,3}\\
0&0&T_{3,3}
\end{pmatrix}\begin{pmatrix}
\textbf{I}&K_{1,2}&K_{1,3}\\
0&\textbf{I}&K_{2,3}\\
0&0&\textbf{I}
\end{pmatrix}.
\end{equation}
By comparing the elements in $(i,j)$ position, we can get
\begin{equation}
\left\{
\begin{aligned}
T_{1,2}+K_{1,2}T_{2,2}&=T_{1,1}K_{1,2}+T_{1,2},\\
T_{2,3}+K_{2,3}T_{3,3}&=T_{2,2}K_{2,3}+T_{2,3},\\
K_{1,2}T_{2,3}+K_{1,3}T_{3,3}&=T_{1,1}K_{1,3}+T_{1,2}K_{2,3}+T_{1,3}.
\end{aligned}
\right.
\end{equation}
Let $K_{1,3}=T_{1,3},K_{2,3}=T_{2,3}$, then we can see $K_{1,3}T_{3,3}=T_{1,1}K_{1,3}$ and $K_{2,3}T_{3,3}=T_{2,2}K_{2,3}$ by a simple calculation. Furthermore, we will obtain $K_{1,2}=T_{1,2}+\phi_{1,3}(T_{1,1})T_{1,2}$ which satisfies $K_{1,2}T_{2,2}=T_{1,1}K_{1,2}$.

To sum up, we find $K$ of the form $$K=\begin{pmatrix}
0&T_{1,2}+
\phi_{1,3}(T_{1,1})T_{1,2}&T_{1,3}\\
0&0&T_{2,3}\\
0&0&0
\end{pmatrix},$$
and it satisfies $(\textbf{I}+K)T=\widetilde{T}(\textbf{I}+K)$.
\end{proof}

The following we will use mathematical induction to come up with a conclusion of rank with $n$.
\begin{lem} \label{mainlemma}
%Let $T=\begin{pmatrix}
%T_{1,1}&T_{1,2}&T_{1,3}&\cdots&T_{1,n}\\
%0&T_{2,2}&T_{2,3}&\cdots&T_{2,n}\\
%\vdots&\ddots&\ddots&\ddots&\vdots\\
%0&\cdots&0&T_{n-1,n-1}&T_{n-1,n}\\
%0&\cdots&\cdots&0&T_{n-1,n}
%\end{pmatrix}, \widetilde{T}=\begin{pmatrix}
%T_{1,1}&T_{1,2}&0&\cdots&0\\
%0&T_{2,2}&T_{2,3}&\cdots&0\\
%\vdots&\ddots&\ddots&\ddots&\vdots\\
%0&\cdots&0&T_{n-1,n-1}&T_{n-1,n}\\
%0&\cdots&\cdots&0&T_{n-1,n}
%\end{pmatrix}$$\in\mathcal{FB}_{n}(\mathbb{D})$.
Let $T=\big((T_{i,j})\big)_{n\times n}$ be an operator in $\mathcal{OFB}_{n}(\Omega)$ and  $\tilde{T}=\big((T_{i,j})\big)_{n\times n}$ be an operator in $\mathcal{FB}_{n}(\Omega)$, where $\tilde{T}$ satisfies the Condition (A).  %$T_{i,j}=\phi_{i,j}(T_{i,i})T_{i,i+1}$ $T_{i+1,i+2}\cdots T_{j-1,j}$, $\phi_{i,j}\in\{T_{i,i}\}'$ and
%If $T_{i,i}=\tilde{T}_{i,i}$ and $T_{i,i+1}=\tilde{T}_{i,i+1}$,
%$T_{i,j}=\psi_{i,j}(T_{i,i})T_{i,i+1}\cdots T_{j-1,j}, \psi_{i,j}\in\{T_{i,i}\}', j-i\ge2$,
Then there exists a bounded operator in the form of $
\setlength{\arraycolsep}{1.2pt}
K=\begin{pmatrix}
0&K_{1,2}&K_{1,3}&\cdots&K_{1,n}\\
0&0&K_{2,3}&\cdots&K_{2,n}\\
\vdots&\ddots&\ddots&\ddots&\vdots\\
0&\cdots&0&0&K_{n-1,n}\\
0&\cdots&\cdots&0&0
\end{pmatrix}
$, where $K_{i,j}=\gamma_{i,j}(T_{i,i})T_{i,i+1}\cdots T_{j-1,j}(j\neq n,\gamma_{i,j}(T_{i,i})\in\{T_{i,i}\}'),K_{i,n}=T_{i,n}(1\le i\le n-1) $, such that $X=\textbf{I}+K$ is invertible and $XT=\widetilde{T}X$.
\end{lem}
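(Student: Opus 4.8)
The plan is to argue by induction on the rank $n$, peeling off the first row and column and reducing everything to a Sylvester-type equation solved one entry at a time. First I would reformulate: with $X=\textbf{I}+K$, the identity $XT=\widetilde{T}X$ is equivalent to $KT-\widetilde{T}K=\widetilde{T}-T$. Because $K$ is strictly upper triangular we have $K^{n}=0$, so $\textbf{I}+K$ is \emph{automatically} invertible, with inverse $\sum_{m=0}^{n-1}(-K)^{m}$; hence the whole content is to solve this Sylvester equation with $K$ of the prescribed shape. Introducing the path operator $P_{i,j}:=T_{i,i+1}\cdots T_{j-1,j}$ (so $P_{i,i+1}=T_{i,i+1}$ and $P_{i,j-1}T_{j-1,j}=P_{i,j}$), the flag relations $T_{k,k}T_{k,k+1}=T_{k,k+1}T_{k+1,k+1}$ give $T_{i,i}P_{i,j}=P_{i,j}T_{j,j}$, so each $P_{i,j}$ intertwines $T_{j,j}$ with $T_{i,i}$. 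Condition (A) reads $\widetilde{T}_{i,j}=\phi_{i,j}(T_{i,i})P_{i,j}$ with $\phi_{i,j}(T_{i,i})\in\{T_{i,i}\}'$, and the candidate has $K_{i,j}=\gamma_{i,j}(T_{i,i})P_{i,j}$ for $j\neq n$ and $K_{i,n}=\widetilde{T}_{i,n}$; in every case the entry intertwines $T_{j,j}$ with $T_{i,i}$.

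For the induction, the case $n=2$ is trivial ($T=\widetilde{T}$, $K=0$) and $n=3$ is Lemma \ref{X.}. For the step I would delete the first row and column: the bottom-right blocks $T''$, $\widetilde{T}''$ (indices $2\le i,j\le n$) again lie in $\mathcal{OFB}_{n-1}(\Omega)$ and $\mathcal{FB}_{n-1}(\Omega)$ and satisfy Condition (A), so the induction hypothesis furnishes the lower-right part $K''$ of $K$ in the required form, with $K_{l,n}=\widetilde{T}_{l,n}$ and $K_{l,j}=\gamma_{l,j}(T_{l,l})P_{l,j}$ for $2\le l<j<n$. Writing $T,\widetilde{T},K$ in the corresponding $2\times 2$ block form, the equation $XT=\widetilde{T}X$ splits into the bottom-right block (already solved) together with a single equation for the first row $(K_{1,2},\dots,K_{1,n})$ of $K$.

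To solve the first row, I compare $(1,j)$ entries and use that each candidate $K_{1,j}$ intertwines $T_{j,j}$ with $T_{1,1}$, so the terms $K_{1,j}T_{j,j}-T_{1,1}K_{1,j}$ cancel. The first-row equation then collapses, for $3\le j\le n$, to
\[
\gamma_{1,j-1}(T_{1,1})P_{1,j}=K_{1,j-1}T_{j-1,j}=\widetilde{T}_{1,j}+\sum_{l=2}^{j-1}\widetilde{T}_{1,l}K_{l,j},
\]
while the $j=2$ entry reduces to $K_{1,2}T_{2,2}=T_{1,1}K_{1,2}$, which holds for any $\gamma_{1,2}$. Thus I set $K_{1,n}=\widetilde{T}_{1,n}$ directly and read off $\gamma_{1,2},\dots,\gamma_{1,n-1}$ successively from the displayed identity, all the $K_{l,j}$ with $l\ge 2$ on the right-hand side being already known from the induction step.

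The delicate point — which I expect to be the crux rather than the block bookkeeping — is to verify that the coefficient $\gamma_{1,j-1}$ produced this way genuinely lies in $\{T_{1,1}\}'$, i.e. that the right-hand side factors as $g(T_{1,1})P_{1,j}$ with $g(T_{1,1})\in\{T_{1,1}\}'$. The term $\widetilde{T}_{1,j}=\phi_{1,j}(T_{1,1})P_{1,j}$ is already of this form, and each mixed term equals $\phi_{1,l}(T_{1,1})\,P_{1,l}\,\mu_{l,j}(T_{l,l})\,P_{l,j}$ with $\mu_{l,j}(T_{l,l})\in\{T_{l,l}\}'$ (namely $\gamma_{l,j}$, or $\phi_{l,n}$ when $j=n$); one must push $\mu_{l,j}(T_{l,l})$ across the intertwiner $P_{1,l}$ into $\{T_{1,1}\}'$ and then use $P_{1,l}P_{l,j}=P_{1,j}$. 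This transport of commutant elements along the $P_{i,j}$ is exactly where the flag structure, Condition (A), and the identification of intertwiners of index-one Cowen-Douglas operators with multipliers are needed: writing $T_{i,i}=M_{i,z}^{*}$ and expressing the intertwiners and commutant elements as adjoints of multipliers, the compositions become ordinary products of scalar holomorphic symbols, and one checks the resulting symbol is again a multiplier of $\mathcal{H}_{1}$. Granting this factorization, summing the terms gives $\gamma_{1,j-1}(T_{1,1})\in\{T_{1,1}\}'$ of the required form, the induction closes, and $X=\textbf{I}+K$ is the desired invertible operator intertwining $T$ and $\widetilde{T}$.
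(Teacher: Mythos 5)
Your proposal follows essentially the same route as the paper's own proof: induction on $n$ with Lemma \ref{X.} as the base case, peeling off the first row and column, writing $T$, $\widetilde{T}$, $K$ in $2\times 2$ block form, and solving the first-row entries by back-substitution starting from $K_{1,n}=T_{1,n}$, stripping the common right factor $T_{j-1,j}$ from each summand (with invertibility of $X=\textbf{I}+K$ coming from nilpotency of $K$). The ``delicate point'' you flag --- rewriting the mixed terms $\phi_{1,l}(T_{1,1})T_{1,2}\cdots T_{l-1,l}\,\mu_{l,j}(T_{l,l})\cdots$ as an element of $\{T_{1,1}\}'$ times $T_{1,2}\cdots T_{j-1,j}$ so that $K_{1,j}$ has the form required to keep the induction running --- is precisely the step the paper itself passes over without justification (it simply asserts $K_{1,j}=\widetilde{\gamma}_{1,j}(T_{1,1})T_{1,2}\cdots T_{j-1,j}$), so your sketch is at parity with, and indeed more candid than, the published argument.
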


\begin{proof}
The proof is by induction for $n$. The validity of the case $n=3$ is immediate from Lemma \ref{X.}. We assume that for two operators $T_{n-1}$ and $\widetilde{T}_{n-1}$ in $\mathcal{FB}_{n-1}(\Omega)$, where
$$\footnotesize{
\setlength{\arraycolsep}{1.2pt}
T_{n-1}=\begin{pmatrix}
T_{2,2}&T_{2,3}&0&\cdots&0\\
0&T_{3,3}&T_{3,4}&\cdots&0\\
\vdots&\ddots&\ddots&\ddots&\vdots\\
0&\cdots&0&T_{n-1,n-1}&T_{n-1,n}\\
0&\cdots&\cdots&0&T_{n,n}
\end{pmatrix},  \widetilde{T}_{n-1}=\begin{pmatrix}
T_{2,2}&T_{2,3}&\cdots&T_{2,n}\\
0&T_{3,3}&\cdots&T_{3,n}\\
\vdots&\ddots&\ddots&\vdots\\
0&\cdots&T_{n-1,n-1}&T_{n-1,n}\\
0&\cdots&0&T_{n,n}
\end{pmatrix},}$$
and $\tilde{T}_{n-1}$ satisfies the Condition (A),
%$T_{i,j}=\phi_{i,j}(T_{i,i})T_{i,i+1}$ $T_{i+1,i+2}\cdots T_{j-1,j}$, $\phi_{i,j}\in\{T_{i,i}\}'$,
there exists a bounded operator in the form of $K'=\begin{pmatrix}
0&K_{2,3}&K_{2,4}&\cdots&T_{2,n}\\
0&0&K_{3,4}&\cdots&T_{3,n}\\
\vdots&\ddots&\ddots&\ddots&\vdots\\
0&\cdots&0&0&T_{n-1,n}\\
0&\cdots&\cdots&0&0
\end{pmatrix}
$, where $K_{i,j}=\gamma_{i,j}(T_{i,i})T_{i,i+1}\cdots T_{j-1,j}(j\neq n,\gamma_{i,j}(T_{i,i})\in\{T_{i,i}\}')$, such that $X'=\textbf{I}+K'$ is invertible and $X'T_{n-1}=\widetilde{T}_{n-1}X'$.

Now, the lemma will be proved if we can show the result is ture in the case of $n$. Let us write the two operators $T,\ \widetilde{T}$ in the form of $2\times 2$ block matrix:
$$T=\begin{pmatrix}
T_{1,1}&T_{1\times n-1}\\
0&T_{n-1}
\end{pmatrix},
\widetilde{T}=\begin{pmatrix}
T_{1,1}&\widetilde{T}_{1\times n-1}\\
0&\widetilde{T}_{n-1}
\end{pmatrix}\in\mathcal{FB}_{n}(\Omega),$$
where $T,\ \tilde{T}$ satisfy the Condition (A)
%$T_{i,j}=\phi_{i,j}(T_{i,i})T_{i,i+1}$ $T_{i+1,i+2}\cdots T_{j-1,j}$, $\phi_{i,j}\in\{T_{i,i}\}'$;
and $T_{1\times n-1}=(T_{1,2},0,\cdots,0)$,\
$\widetilde{T}_{1\times n-1}=(T_{1,2},T_{1,3},\cdots,$
$T_{1,n})$.
%and $$T_{n-1}=\begin{pmatrix}
%T_{2,2}&T_{2,3}&T_{2,4}&\cdots&T_{2,n}\\
%0&T_{3,3}&T_{3,4}&\cdots&T_{3,n}\\
%\vdots&\ddots&\ddots&\ddots&\vdots\\
%0&\cdots&0&T_{n-1,n-1}&T_{n-1,n}\\
%0&\cdots&\cdots&0&T_{n,n}
%\end{pmatrix},\widetilde{T}_{n-1}=\begin{pmatrix}
%T_{2,2}&T_{2,3}&0&\cdots&0\\
%0&T_{3,3}&T_{3,4}&\cdots&0\\
%\vdots&\ddots&\ddots&\ddots&\vdots\\
%0&\cdots&0&T_{n-1,n-1}&T_{n-1,n}\\
%0&\cdots&\cdots&0&T_{n,n}
%\end{pmatrix}.$$

Set $X:=\textbf{I}+K$, where$$K=\begin{pmatrix}
0&K_{1,2}&K_{1,3}&\cdots&K_{1,n}\\
0&0&K_{2,3}&\cdots&T_{2,n}\\
\vdots&\ddots&\ddots&\ddots&\vdots\\
0&\cdots&\cdots&0&T_{n-1,n}\\
0&\cdots&\cdots&0&0
\end{pmatrix}.$$
And the operator $K$ is also written in the form of $2\times 2$ block matrix:$$K=\begin{pmatrix}
0&K_{1\times n-1}\\
0&K'
\end{pmatrix},$$
where $K_{1\times n-1}=(K_{1,2},K_{1,3},\cdots,K_{1,n})$.
From the relation $XT=\widetilde{T}X$, we get$$\begin{pmatrix}
\textbf{I}&K_{1\times n-1}\\
0&X'
\end{pmatrix}
\begin{pmatrix}
T_{1,1}&T_{1\times n-1}\\
0&T_{n-1}
\end{pmatrix}
=\begin{pmatrix}
T_{1,1}&\widetilde{T}_{1\times n-1}\\
0&\widetilde{T}_{n-1}
\end{pmatrix}
\begin{pmatrix}
\textbf{I}&K_{1\times n-1}\\
0&X'
\end{pmatrix},
$$
which is equivalent to
\begin{equation}\label{equation3.3}
T_{1\times n-1}+K_{1\times n-1}T_{n-1}=T_{1,1}K_{1\times n-1}+\widetilde{T}_{1\times n-1}X'.
\end{equation}
%That is,
%$$(T_{1,2},0,\cdots,0)+(K_{1,2},K_{1,3},\cdots,K_{1,n})\begin{pmatrix}
%T_{2,2}&T_{2,3}&0&\cdots&0\\
%0&T_{3,3}&T_{3,4}&\cdots&0\\
%\vdots&\ddots&\ddots&\ddots&\vdots\\
%0&\cdots&0&T_{n-1,n-1}&T_{n-1,n}\\
%0&\cdots&\cdots&0&T_{n,n}
%\end{pmatrix}$$
%$$=T_{1,1}(K_{1,2},K_{1,3},\cdots,K_{1,n})+(T_{1,2},T_{1,3},\cdots,T_{1,n})
%\begin{pmatrix}
%I&K_{2,3}&K_{2,4}&\cdots&T_{2,n}\\
%0&I&K_{3,4}&\cdots&T_{3,n}\\
%\vdots&\ddots&\ddots&\ddots&\vdots\\
%0&\cdots&0&I&T_{n-1,1}\\
%0&\cdots&\cdots&0&I
%\end{pmatrix}.$$
For $3\le i\le n-1$, by equating the i-th entry of equation (\ref{equation3.3}), we have
\begin{equation}
\scriptsize{
\left\{
\begin{aligned}
T_{1,2}+K_{1,2}T_{2,2}&=T_{1,1}K_{1,2}+T_{1,2},\\
K_{1,2}T_{2,3}+K_{1,3}T_{3,3}&=T_{1,1}K_{1,3}+T_{1,2}K_{2,3}+T_{1,3},\\
\vdots\\
K_{1,i-1}T_{i-1,i}+K_{1,i}T_{i,i}&=T_{1,1}K_{1,i}+T_{1,2}K_{2,i}+\cdots+T_{1,i-1}K_{i-1,i}+T_{1,i},\\
\vdots\\
K_{1,n-2}T_{n-2,n-1}+K_{1,n-1}T_{n-1,n-1}&=T_{1,1}K_{1,n-1}+T_{1,2}K_{2,n-1}+\cdots+T_{1,n-2}K_{n-2,n-1}+T_{1,n-1},\\
K_{1,n-1}T_{n-1,n}+K_{1,n}T_{n,n}&=T_{1,1}K_{1,n}+T_{1,2}T_{2,n}+\cdots+T_{1,n-1}T_{n-1,n}+T_{1,n}.
\end{aligned}
\right.}
\end{equation}

Let $K_{1,n}=T_{1,n}$, then we can see  $K_{1,n}T_{n,n}=T_{1,1}K_{1,n}$ by a direct calculation. Thus we have
\begin{equation*}
\begin{aligned}
K_{1,n-1}&=T_{1,2}\phi_{2,n}(T_{2,2})T_{2,3}\cdots T_{n-2,n-1}+\cdots+\phi_{1,n}(T_{1,1})T_{1,2}\cdots T_{n-2,n-1}\\
&=\widetilde{\gamma}_{1,n-1}(T_{1,1})T_{1,2}\cdots T_{n-2,n-1},
\end{aligned}
\end{equation*}
which satisfies $K_{1,n-1}T_{n-1,n-1}=T_{1,1}K_{1,n-1}$. Furthermore, we obtain
\begin{equation*}
\begin{aligned}
K_{1,n-2}&=T_{1,2}\gamma_{2,n-1}(T_{2,2})T_{2,3}\cdots T_{n-3,n-2}+\cdots+\phi_{1,n-1}(T_{1,1})T_{1,2}\cdots T_{n-3,n-2}\\
&=\widetilde{\gamma}_{1,n-2}(T_{1,1})T_{1,2}\cdots T_{n-3,n-2},
\end{aligned}
\end{equation*}
which satisfies $K_{1,n-2}T_{n-2.n-2}=T_{1,1}K_{1,n-2}$. By following previous steps, suppose that for $3\le i\le n$, we have solved $K_{1,i}$ which satisfies $K_{1,i}T_{i,i}=T_{1,1}K_{1,i}$. Then by a simple calculation, we have
\begin{equation*}
\begin{aligned}
K_{1,i-1}&=T_{1,2}\gamma_{2,i}(T_{2,2})T_{2,3}\cdots T_{i-2,i-1}+\cdots+\phi_{1,i}(T_{1,1})T_{1,2}\cdots T_{i-2,i-1}\\
&=\widetilde{\gamma}_{1,i-1}(T_{1,1})T_{1,2}\cdots T_{i-2,i-1},
\end{aligned}
\end{equation*}
which satisfies $K_{1,i-1}T_{i-1,i-1}=T_{1,1}K_{1,i-1}$.
This completes the proof.
\end{proof}

\begin{lem}\label{mainremark}
An operator $T=\big((T_{i,j})\big)_{n\times n}$ in $\mathcal{CFB}_{n}(\Omega)$ is similar to an operator $\tilde{T}=\big((\tilde{T}_{i,j})\big)_{n\times n}$ in $\mathcal{OFB}_{n}(\Omega)$, where $\tilde{T}_{i,i}=T_{i,i}$, $1\le i \le n$, and $\tilde{T}_{i,i+1}=T_{i,i+1}$, $1\le i\le n-1$.
\end{lem}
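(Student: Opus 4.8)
The plan is to produce $\tilde T$ as the truncation of $T$ to its first two diagonals and then read off the similarity from Lemma \ref{mainlemma}, applied with the two operators in interchanged roles. First I would unpack the hypothesis $T\in\mathcal{CFB}_n(\Omega)$. Item (1) of Definition \ref{CFBn} asserts verbatim that $T_{i,j}=\phi_{i,j}(T_{i,i})T_{i,i+1}\cdots T_{j-1,j}$ with $\phi_{i,j}(T_{i,i})\in\{T_{i,i}\}'$, which is precisely Condition (A). Writing $D=\mathrm{diag}\{T\}=T_{1,1}\dotplus\cdots\dotplus T_{n,n}$, the requirement $D\in\{T\}'$ gives $DT=TD$, and comparing the $(i,i+1)$ entries on the two sides yields $T_{i,i}T_{i,i+1}=T_{i,i+1}T_{i+1,i+1}$ for $1\le i\le n-1$. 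Thus $T$ satisfies the flag relations and belongs to $\mathcal{FB}_n(\Omega)$ while satisfying Condition (A).

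Next I would define $\tilde T=\big((\tilde T_{i,j})\big)_{n\times n}$ by $\tilde T_{i,i}=T_{i,i}$, $\tilde T_{i,i+1}=T_{i,i+1}$, and $\tilde T_{i,j}=0$ for $j-i\ge 2$. This has exactly the shape of Definition \ref{OFBn}: the diagonal blocks are the operators $T_{i,i}\in\mathcal{B}_1(\Omega)$, and the superdiagonal entries are the nonzero intertwiners $T_{i,i+1}$, which satisfy $\tilde T_{i,i}\tilde T_{i,i+1}=\tilde T_{i,i+1}\tilde T_{i+1,i+1}$ by the relations just derived. Hence $\tilde T\in\mathcal{OFB}_n(\Omega)$ and it carries the same diagonal and superdiagonal as $T$, as demanded by the statement.

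I would then invoke Lemma \ref{mainlemma} for the pair $\tilde T\in\mathcal{OFB}_n(\Omega)$ (playing the role of the lemma's $T$) and $T\in\mathcal{FB}_n(\Omega)$ satisfying Condition (A) (playing the role of the lemma's $\tilde T$); both have identical diagonal and superdiagonal. The lemma supplies an upper-triangular $K$ with zero diagonal for which $X=\textbf{I}+K$ is invertible and $X\tilde T=T X$. Rearranging, $T=X\tilde T X^{-1}$, so $T$ is similar to $\tilde T\in\mathcal{OFB}_n(\Omega)$, which is the assertion.

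Since Lemma \ref{mainlemma} does all the analytic work, the only content remaining is the verification of its hypotheses and the correct orientation of the conjugation: because Lemma \ref{mainlemma} conjugates the $\mathcal{OFB}_n$ operator into the $\mathcal{FB}_n$ one, I must feed $\tilde T$ (not $T$) as its first argument so as to land on $T=X\tilde T X^{-1}$ rather than its inverse. I would also point out that the intertwiner $X$ of Lemma \ref{mainlemma} is built purely algebraically---by solving the commutation relations $K_{1,i}T_{i,i}=T_{1,1}K_{1,i}$ with the aid of Condition (A)---so Property (H) in Definition \ref{CFBn} plays no role in this reduction and presents no obstacle.
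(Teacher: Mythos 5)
Your proposal is correct and takes essentially the same route as the paper: the paper's own proof of this lemma is just the single line that it follows directly from Lemma \ref{mainlemma}, and your argument is precisely that application, with the needed verifications (Condition (A) read off from Definition \ref{CFBn}, the flag relations extracted from $\mathrm{diag}\{T\}\in\{T\}'$, membership of the truncation in $\mathcal{OFB}_{n}(\Omega)$, and the correct orientation $X\tilde{T}=TX$ so that $T=X\tilde{T}X^{-1}$) spelled out. Your added observation that Property (H) is never used in this reduction is also accurate.
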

\begin{proof}
It follows from Lemma \ref{mainlemma} directly.
\end{proof}

\textbf{The proof of Theorem \ref{main 1}: }
\begin{proof}
Following that $\mathcal{CFB}_{n}(\Omega)$ is norm dense in $\mathcal{B}_{n}(\Omega)$, there is a sequence of operators $\{\tilde{T}_{n}\}_{n=1}^{\infty}$ in $\mathcal{CFB}_{n}(\Omega)$ satisfying $$\Vert T-\tilde{T}_{n}\Vert\rightarrow0.$$
By Lemma \ref{mainremark}, there exist a sequence of operators $\{T_{n}\}_{n=1}^{\infty}$ in $\mathcal{OFB}_{n}(\Omega)$ and a sequence of bounded and invertible operators $\{X_{n}\}_{n=1}^{\infty}$ such that $$\tilde{T}_{n}=X_{n}T_{n}X_{n}^{-1},\ n\ge1.$$
Then we can see that
$$\Vert T-X_{n}T_{n}X_{n}^{-1}\Vert\rightarrow0.$$
\end{proof}

\section{Similarity of operators in $\mathcal{FB}_{n}(\Omega)$}
When we attach the Condition (A) to operators in $\mathcal{FB}_{n}(\Omega)$, the similarity problem of these operators can be translated to the similarity problem of operators in $\mathcal{OFB}_{n}(\Omega)$. Following that, we give a set of complete similarity invariants for a large class of operators in $\mathcal{FB}_{n}(\Omega)$. To begin with, we will make some preparations for our main result of this section.

In \cite{Agler.J.2}, J. Agler introduced the concept of $n-$hypercontraction operator. We call $T\in\mathcal{L}(\mathcal{H})$ a hypercontraction of order $n$ if it holds
$$\sum\limits_{j=0}^{k}(-1)^{j}\binom{k}{j}(T^{*})^{j}T^{j}\ge0,$$
for all $1\le k\le n$. And for $1\le k\le n$,
$$D_{k,T}=\left(\sum\limits_{j=0}^{k}(-1)^{j}\binom{k}{j}(T^{*})^{j}T^{j}\right)^{\frac{1}{2}},$$
are called as the defect operators. Then the following result was proved.
\begin{lem}\cite{Agler.J.2}\label{isometry}
Suppose that $T\in\mathcal{L}(\mathcal{H})$ is a $n-$hypercontraction operator. Let $\mathcal{K}_{n}$ be the Hilbert space of sequence $\{f_{k}\}_{k=0}^{\infty}$ of vectors from $\mathcal{H}$ with
$$\Vert\{f_{k}\}_{k=0}^{\infty}\Vert^{2}=\sum\limits_{k=0}^{\infty}\Vert f_{k}\Vert^{2}\binom{n+k-1}{k}<\infty,$$
and $W_{n}:\mathcal{H}\rightarrow\mathcal{K}_{n}$ be defined by
$$W_{n}x:=\{D_{n,T}T^{k}x\}_{k=0}^{\infty},\ x\in\mathcal{H}.$$
Then $W_{n}:\mathcal{H}\rightarrow\mathcal{K}_{n}$ is an isometry.
\end{lem}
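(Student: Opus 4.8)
The plan is to reduce the isometry claim to a single operator identity and then verify that identity by a telescoping argument. First I would unwind the definition of the norm on $\mathcal{K}_n$: for $x\in\mathcal{H}$,
$$\|W_n x\|^2=\sum_{k=0}^{\infty}\binom{n+k-1}{k}\|D_{n,T}T^{k}x\|^2=\sum_{k=0}^{\infty}\binom{n+k-1}{k}\langle (T^{*})^{k}D_{n,T}^{2}T^{k}x,\,x\rangle,$$
using that $D_{n,T}$ is self-adjoint so that $\|D_{n,T}T^{k}x\|^2=\langle (T^{*})^{k}D_{n,T}^{2}T^{k}x,x\rangle$. Thus it suffices to show that the partial sums $S_N^{(n)}:=\sum_{k=0}^{N}\binom{n+k-1}{k}(T^{*})^{k}D_{n,T}^{2}T^{k}$ satisfy $\langle S_N^{(n)}x,x\rangle\to\|x\|^2$ for every $x$, i.e.\ that $S_N^{(n)}\to \textbf{I}$ in the weak operator topology.

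Next I would record two elementary recurrences. From Pascal's rule one obtains the defect identity $D_{n,T}^{2}=D_{n-1,T}^{2}-T^{*}D_{n-1,T}^{2}T$, and for the weights $\binom{n+k-1}{k}-\binom{n+k-2}{k-1}=\binom{n+k-2}{k}$. Substituting the defect identity into $S_N^{(n)}$ and reindexing the shifted sum, the interior terms telescope and the two recurrences combine to give, by induction on $n$,
$$S_N^{(n)}=S_N^{(n-1)}-\binom{N+n-1}{N}(T^{*})^{N+1}D_{n-1,T}^{2}T^{N+1},$$
with base case
$$S_N^{(1)}=\sum_{k=0}^{N}\big((T^{*})^{k}T^{k}-(T^{*})^{k+1}T^{k+1}\big)=\textbf{I}-(T^{*})^{N+1}T^{N+1}.$$
Unwinding the recursion yields $S_N^{(n)}=\textbf{I}-R_N$, where $R_N=(T^{*})^{N+1}\big(D_{1,T}^{2}+\sum_{m=2}^{n}\binom{N+m-1}{N}D_{m-1,T}^{2}\big)T^{N+1}\ge 0$ is an explicit nonnegative remainder. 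This cancellation is the operatorial shadow of the formal power series identity $(1-x)^{-n}(1-x)^{n}=1$, whose coefficient of $x^{m}$ vanishes for every $m\ge 1$ by Vandermonde convolution.

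Finally I would treat convergence. Since each summand $\binom{n+k-1}{k}(T^{*})^{k}D_{n,T}^{2}T^{k}$ is nonnegative by the hypercontractivity of $T$, the family $\{S_N^{(n)}\}$ is monotone increasing and, by the displayed formula, bounded above by $\textbf{I}$; hence $\langle S_N^{(n)}x,x\rangle$ increases to a limit $\le\|x\|^2$, so $W_n$ is at worst a contraction. The isometry then reduces to showing $\langle R_N x,x\rangle\to 0$, and I expect \emph{this} to be the main obstacle: the weights $\binom{N+m-1}{N}$ grow polynomially in $N$, so one must show that the quantities $\langle D_{m-1,T}^{2}T^{N+1}x,\,T^{N+1}x\rangle$ decay fast enough to defeat that growth. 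This is precisely where the full strength of $n$-hypercontractivity (control of $\|T^{N+1}x\|$ along the flag of defect operators $D_{1,T},\dots,D_{n-1,T}$) must be exploited. Once $R_N\to 0$ is established in the strong operator topology, one concludes $\|W_n x\|^2=\|x\|^2$ for all $x\in\mathcal{H}$, so that $W_n$ is an isometry.
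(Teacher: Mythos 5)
Your reduction and the telescoping algebra are essentially correct: the defect identity $D_{n,T}^{2}=D_{n-1,T}^{2}-T^{*}D_{n-1,T}^{2}T$, Pascal's rule for the weights, and the recursion $S_{N}^{(n)}=S_{N}^{(n-1)}-\binom{N+n-1}{N}(T^{*})^{N+1}D_{n-1,T}^{2}T^{N+1}$ all check out, and they do prove that $W_{n}$ is a contraction. But the endgame has one algebraic slip and one fatal gap. The slip: unwinding the recursion down to $S_{N}^{(1)}=\mathbf{I}-(T^{*})^{N+1}T^{N+1}$ contributes the term $(T^{*})^{N+1}\,\mathbf{I}\,T^{N+1}$, not $(T^{*})^{N+1}D_{1,T}^{2}T^{N+1}$; the correct remainder is $R_{N}=(T^{*})^{N+1}\big(\mathbf{I}+\sum_{m=2}^{n}\binom{N+m-1}{N}D_{m-1,T}^{2}\big)T^{N+1}$. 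This matters, because the corrected remainder satisfies $\langle R_{N}x,x\rangle\ge\Vert T^{N+1}x\Vert^{2}$, and that quantity has no reason to tend to zero: any isometry, for instance $T=\mathbf{I}$, satisfies the stated definition of an $n$-hypercontraction (all the alternating sums vanish identically), yet then $D_{n,T}=0$ and $W_{n}=0$ is certainly not an isometry. So the step you defer as ``the main obstacle'' is not merely unproved — it is unprovable. The lemma as quoted is false without the additional hypothesis that $T$ is \emph{pure}, i.e.\ $T^{N}\to0$ strongly (the class $C_{0\cdot}$); this hypothesis is part of Agler's actual theorem, and the paper (which gives no proof, only the citation) has silently dropped it. In the paper's application the omission is harmless, since the diagonal entries there are contractions in $\mathcal{B}_{1}(\mathbb{D})$, which are automatically pure: $T^{N}t(w)=w^{N}t(w)\to0$ on the dense span of eigenvectors and $\Vert T^{N}\Vert\le1$.

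Once purity is added, your scheme does close, but the weighted defect terms still need an idea you have not supplied, since convergence of a series does not by itself defeat the polynomially growing weights $\binom{N+m-1}{N}\asymp N^{m-1}$. Argue by induction on $n$: purity disposes of the base term $\Vert T^{N+1}x\Vert^{2}$; the inductive hypothesis (isometry at level $n-1$) gives convergence of $\sum_{k}\binom{n+k-2}{k}a_{k}$ with $a_{k}=\Vert D_{n-1,T}T^{k}x\Vert^{2}$; and the sequence $a_{k}$ is non-increasing, because $D_{n-1,T}^{2}-T^{*}D_{n-1,T}^{2}T=D_{n,T}^{2}\ge0$. For a non-increasing sequence, comparing $a_{N}$ against the tail $\sum_{N/2\le k\le N}\binom{n+k-2}{k}a_{k}\ge c_{n}N^{n-1}a_{N}$ forces $N^{n-1}a_{N}\to0$, which is exactly the decay needed against $\binom{N+n-1}{N}$. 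It is this monotonicity of $\Vert D_{n-1,T}T^{k}x\Vert$ — a consequence of $n$-hypercontractivity — together with purity, and not ``control of $\Vert T^{N}x\Vert$ along the flag of defect operators,'' that completes the proof.
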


Let $n$ be a positive integer. Denote  by $\mathcal{M}_{n}$ the Hilbert space of analytic functions $f=\sum\limits_{k=0}^{\infty}\hat{f}(k)z^{k}$ on the unit disc $\mathbb{D}$ satisfying $$\Vert f\Vert^{2}_{n}:=\sum\limits_{k=0}^{\infty}|\hat{f}(k)|^{2}\frac{1}{\binom{n+k-1}{k}}<\infty.$$
Note that $\mathcal{M}_{1}$ is the Hardy space $H^{2}$ and, for each positive integer $n\ge 2$, the space $\mathcal{M}_{n}$ is the weighted Bergman space. Moreover, $\mathcal{M}_{n}$ is a reproducing kernel Hilbert space with reproducing kernel $K(z,w)=\frac{1}{(1-z\bar{w})^{n}},\ z,w\in\mathbb{D}$.

Similarly, we can define the vector-valued spaces $\mathcal{M}_{n,E}$ taking values in a separable Hilbert space $E$. And on the space $\mathcal{M}_{n,E}$, there is the forward shift operator $S_{n,E},\ S_{n,E}f(z):=zf(z)$, and the backward shift operator $S^{*}_{n,E}$, its adjoint.

\begin{lem}\cite{DHS}\label{H.S}
For a $n-$hypercontraction operator $T\in\mathcal{B}_{m}(\mathbb{D})$, $T\sim_{s}S^{*}_{n,\mathbb{C}^{m}}$ if and only if there exists a bounded subharmonic function $\phi$ defined on $\mathbb{D}$ such that $$\mathcal{K}_{T}-\mathcal{K}_{S^{*}_{n,\mathbb{C}^{m}}}=\frac{\partial^{2}}{\partial\bar{w}\partial w}\phi.$$
\end{lem}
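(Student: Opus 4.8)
The plan is to pass to reproducing-kernel models and translate similarity into a two-sided comparison of kernels, then read off the curvature identity and identify the bounded subharmonic function. Since $T\in\mathcal{B}_m(\mathbb{D})$, it is unitarily equivalent to the adjoint $M_z^*$ of multiplication by $z$ on a reproducing kernel Hilbert space $\mathcal{H}_{K_T}$ of $\mathbb{C}^m$-valued holomorphic functions, with $\ker(T-w)$ spanned by the columns of $K_T(\cdot,\bar w)$; likewise $S^*_{n,\mathbb{C}^m}=M_z^*$ on $\mathcal{M}_{n,\mathbb{C}^m}$, whose kernel is $(1-z\bar w)^{-n}I$. First I would use the $\mathcal{L}(\mathbb{C}^m)$-valued form of the Richter--Shields multiplier lemma to see that $T\sim_s S^*_{n,\mathbb{C}^m}$ holds \emph{iff} there is an invertible $\mathcal{L}(\mathbb{C}^m)$-valued multiplier $\Psi$ such that $M_\Psi$ and $M_{\Psi^{-1}}$ are both bounded and intertwine the two forward shifts. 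By the standard positive-definiteness criterion for multiplier boundedness, this is equivalent to the two-sided kernel domination $c_1 K_T(z,w)\preceq \Psi(z)\,(1-z\bar w)^{-n}\,\Psi(w)^*\preceq c_2 K_T(z,w)$ in the sense of positive definite kernels.

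For necessity, restrict this domination to the diagonal, so that the Gram matrices satisfy $h_T(w)\asymp \Psi(w)(1-|w|^2)^{-n}\Psi(w)^*$. Writing $h_T(w)=\Psi(w)(1-|w|^2)^{-n}e^{-\phi(w)}\Psi(w)^*$ defines a bounded Hermitian factor $e^{-\phi}$ (boundedness two-sided from $c_1,c_2$). Since $\Psi$ is holomorphic and invertible, it acts as a holomorphic change of frame and does not alter the curvature invariant, while the factor $e^{-\phi}$ between the two metrics produces, after applying $-\partial_{\bar w}\partial_w$ and using $\mathcal{K}(w)=-\partial_{\bar w}\partial_w\log\|\gamma(w)\|^2$ on the line-bundle level, the identity $\mathcal{K}_T-\mathcal{K}_{S^*_{n,\mathbb{C}^m}}=\partial_{\bar w}\partial_w\phi$ with $\phi$ bounded. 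It remains to check that $\phi$ is \emph{subharmonic}, and this is exactly where the $n$-hypercontraction hypothesis enters: by Agler's isometry $W_n$ of Lemma \ref{isometry}, $T$ is unitarily equivalent to the restriction of a backward-shift model $S^*_{n,\mathcal{D}}$ to a holomorphic co-invariant subspace, so $E_T$ is a Hermitian holomorphic sub-bundle of the model bundle. The Gauss--Codazzi relation for the second fundamental form (Section 2.2) then forces the curvature inequality $\mathcal{K}_T\ge\mathcal{K}_{S^*_{n,\mathbb{C}^m}}$, i.e. $\partial_{\bar w}\partial_w\phi\ge0$, so $\phi$ is subharmonic.

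For sufficiency, I would reverse the computation. The identity $\mathcal{K}_T-\mathcal{K}_{S^*_{n,\mathbb{C}^m}}=\partial_{\bar w}\partial_w\phi$ says that $-\log\|\gamma_T(w)\|^2+n\log(1-|w|^2)-\phi(w)$ is harmonic on the simply connected disc, hence the real part of a holomorphic function; solving gives a nonvanishing holomorphic $\Psi$ with $K_T(w,w)=\Psi(w)\,e^{-\phi(w)}(1-|w|^2)^{-n}\Psi(w)^*$ on the diagonal, and boundedness of $\phi$ yields two-sided \emph{diagonal} comparability. The crux is to upgrade this diagonal estimate to the full positive-definite kernel domination that makes $M_\Psi$ bounded with bounded inverse. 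Here subharmonicity of $\phi$ is indispensable: I would control the sesqui-analytic kernel whose diagonal is $e^{-\phi(w)}(1-z\bar w)^{-n}$ by the model kernel using the Riesz measure $\Delta\phi\ge 0$, via a H\"{o}rmander $L^2$-estimate for $\bar\partial$ (equivalently a Carleson-type embedding) that converts the bounded subharmonic weight into the required kernel inequality, and then invoke the Richter--Shields lemma in reverse to produce the invertible intertwiner realizing $T\sim_s S^*_{n,\mathbb{C}^m}$.

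I expect the main obstacle to be precisely this last upgrade in the sufficiency direction: turning the pointwise diagonal comparison of kernels into an operator-norm statement about $M_\Psi$, since it is the only step that uses the full force of subharmonicity rather than mere boundedness and requires genuine analysis ($\bar\partial$-estimates or Carleson embeddings) rather than formal bundle computations. A secondary technical point is the matrix-valued bookkeeping for $m>1$: one must interpret $\phi$ as $\mathcal{L}(\mathbb{C}^m)$-valued with ``subharmonic'' meaning $\partial_{\bar w}\partial_w\phi\ge 0$ in the sense of positive semidefinite matrices, and verify that the scalar arguments for the harmonic splitting and for the curvature monotonicity carry over in a frame-independent way.
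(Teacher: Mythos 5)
First, a point of reference: the paper gives no proof of Lemma \ref{H.S} at all; it is imported verbatim from \cite{DHS} (with \cite{Kwon1} as the antecedent for the Hardy-shift case), so your attempt must be measured against that published proof rather than anything internal to this paper. Your necessity direction is essentially the standard argument, and in the scalar case $m=1$ it does produce a \emph{bounded} $\phi$ with $\mathcal{K}_T-\mathcal{K}_{S^*_{n,\mathbb{C}^m}}=\tfrac{\partial^2}{\partial\bar w\partial w}\phi$: similarity gives an invertible multiplier $\Psi$, the multiplier-norm criterion gives the two-sided kernel domination, and restriction to the diagonal peels off a bounded factor. But the step that is supposed to make $\phi$ \emph{subharmonic} is wrong as stated: curvature \emph{decreases} on holomorphic sub-bundles, so Agler's model (Lemma \ref{isometry}) embeds $E_T$ into the model bundle and yields $\mathcal{K}_T\le \mathcal{K}_{S^*_{n,\mathbb{C}^m}}$ in the convention $\mathcal{K}=-\partial_{\bar w}\partial_w\log\|\gamma\|^2$ that you explicitly invoke --- the opposite of your Gauss--Codazzi claim. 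With your own conventions your $\phi$ comes out bounded and \emph{super}harmonic; subharmonicity requires pairing $\phi$ with $\mathcal{K}_{S^*_{n,\mathbb{C}^m}}-\mathcal{K}_T$ instead. (The paper's statement is itself sign-ambiguous, since its matrix definition $\mathcal{K}=\partial_{\bar w}(h^{-1}\partial_w h)$ and its line-bundle formula differ by a sign, but your two assertions are inconsistent with each other, and subharmonicity is precisely what the hard direction must exploit.)

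The genuine gap is the sufficiency direction, which is the actual content of \cite{DHS}. You reduce it to upgrading the \emph{diagonal} comparability $K_T(w,w)\asymp \Psi(w)\,e^{-\phi(w)}(1-|w|^2)^{-n}\,\Psi(w)^*$ to domination in the positive-definite order, and you defer that upgrade to an unspecified H\"ormander $\bar\partial$-estimate or Carleson embedding. This is not a known implication and cannot be taken for granted: comparability of sesqui-analytic kernels on the diagonal does not control a multiplier norm, and if it did, boundedness of $\phi$ alone --- with no use of its Riesz measure --- would already yield similarity, which is not how the theorem works. The published proof proceeds quite differently: $T$ is realized inside the vector-valued model space, similarity to the backward shift is reduced to the existence of a bounded analytic projection onto the eigenvector sub-bundle, and that projection is constructed by solving $\bar\partial$-equations with estimates in which the Green potential/Riesz decomposition of the bounded subharmonic $\phi$ enters quantitatively (the machinery of \cite{Kwon1} and \cite{Tr}). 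Your sketch names plausible tools but contains no argument at exactly this point, so the theorem is not proved. Two further issues you underestimate: for $m>1$ the criterion must be read through the trace of the (matrix-valued) curvature, cf. \cite{HJK}, and your ``harmonic splitting'' producing a holomorphic $\Psi$ with $\Psi\Psi^*$ equal to a prescribed positive matrix function is a nontrivial factorization problem, not the one-line exponentiation available when $m=1$.
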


\begin{lem}\cite{NKN}\ [Model Theorem]
Every contraction $T$ on the Hilbert space $\mathcal{H}$ with property that $\lim\limits_{n\rightarrow\infty}\Vert T^{n}h\Vert=0$ for every $h\in\mathcal{H}$ is unitarily equivalent to $M^{*}_{z,E}|_{K}$ for some Hilbert space $E$ and an $M^{*}_{z,E}-$invariant subspace $K$ of $H^{2}_{E}$, where $M^{*}_{z}$ is the adjoint of the multiplication operator on the Hardy space $H^{2}$ and $H^{2}_{E}$ is vector-valued Hardy class $H^{2}$ with values in $E$.
\end{lem}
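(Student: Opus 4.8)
The plan is to realize $\mathcal{H}$ explicitly as an invariant subspace of a backward shift, built directly from the defect operator of $T$. Set $D_{T}=(I-T^{*}T)^{1/2}$ and take $E:=\overline{\mbox{ran}}\,D_{T}$ to be the defect space (this is the ``some Hilbert space $E$'' of the statement). Define a map $\Phi:\mathcal{H}\rightarrow H^{2}_{E}$ by
$$\Phi h=\sum\limits_{n=0}^{\infty}(D_{T}T^{n}h)\,z^{n},\qquad h\in\mathcal{H}.$$
The first step is to show $\Phi$ is a well-defined isometry. The crucial computation is the pointwise identity $\Vert D_{T}T^{n}h\Vert^{2}=\langle(I-T^{*}T)T^{n}h,T^{n}h\rangle=\Vert T^{n}h\Vert^{2}-\Vert T^{n+1}h\Vert^{2}$, so the partial sums telescope:
$$\sum\limits_{n=0}^{N}\Vert D_{T}T^{n}h\Vert^{2}=\Vert h\Vert^{2}-\Vert T^{N+1}h\Vert^{2}.$$
This is exactly where the hypothesis $\lim_{n\rightarrow\infty}\Vert T^{n}h\Vert=0$ is used: letting $N\rightarrow\infty$ yields $\Vert\Phi h\Vert^{2}=\Vert h\Vert^{2}$, which simultaneously guarantees $\Phi h\in H^{2}_{E}$ and that $\Phi$ is an isometry.

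Next I would verify the intertwining relation with the backward shift. Since $\Phi(Th)=\sum_{n\ge0}(D_{T}T^{n+1}h)z^{n}$, and $M^{*}_{z,E}$ acts on Taylor coefficients by $M^{*}_{z,E}\big(\sum a_{n}z^{n}\big)=\sum a_{n+1}z^{n}$, we read off directly that $M^{*}_{z,E}\Phi h=\Phi(Th)$, i.e. $M^{*}_{z,E}\Phi=\Phi T$. Setting $K:=\Phi(\mathcal{H})$, the image of a complete space under an isometry is closed, so $K$ is a closed subspace of $H^{2}_{E}$; and the intertwining identity gives $M^{*}_{z,E}K=\Phi(T\mathcal{H})\subseteq\Phi(\mathcal{H})=K$, so $K$ is $M^{*}_{z,E}$-invariant. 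Finally, regarding $\Phi$ as a surjective isometry (hence unitary) from $\mathcal{H}$ onto $K$ and using $\Phi T=(M^{*}_{z,E}|_{K})\Phi$, we conclude that $T$ is unitarily equivalent to $M^{*}_{z,E}|_{K}$, as required.

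The argument is essentially a single telescoping computation, so I do not expect a genuine structural difficulty; the one point that must be handled carefully is the role of the $C_{0\cdot}$ hypothesis. Without the assumption $\Vert T^{n}h\Vert\rightarrow0$ the telescoping identity would leave a residual defect term $\lim_{N}\Vert T^{N+1}h\Vert^{2}$, so $\Phi$ would be merely a contraction rather than an isometry and the identification of $\mathcal{H}$ with the subspace $K$ would break down. A secondary point worth spelling out is that convergence of $\Phi h$ is in the genuine Hilbert-space norm of $H^{2}_{E}$ (not just as a formal power series), which is itself justified by the isometry bound obtained above.
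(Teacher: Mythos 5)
Your proof is correct and complete: the telescoping identity $\Vert D_{T}T^{n}h\Vert^{2}=\Vert T^{n}h\Vert^{2}-\Vert T^{n+1}h\Vert^{2}$, the use of the hypothesis $\Vert T^{n}h\Vert\rightarrow0$ to make $\Phi$ an isometry rather than a mere contraction, the coefficient-shift identity $M^{*}_{z,E}\Phi=\Phi T$, and the closedness and invariance of $K=\Phi(\mathcal{H})$ are all handled properly. The relevant comparison point is that the paper does not prove this lemma at all: it is quoted as a black box from Nikolskii's book \cite{NKN}, with the remark that ``the concrete proof is given when we prove the main conclusion.'' What the paper actually does, in the proof of Theorem \ref{main2}, is run the analogous construction for $m$-hypercontractions: the maps $U_{i}x=\sum\limits_{n=0}^{\infty}\frac{z^{n}}{\Vert z^{n}\Vert^{2}_{i}}\otimes D_{m_{i},T_{i,i}}T_{i,i}^{n}x$ are shown to be isometries by invoking Agler's lemma (Lemma \ref{isometry}) rather than by a direct computation. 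Your map $\Phi$ is exactly this construction specialized to $m=1$ (defect operator $D_{T}=(I-T^{*}T)^{1/2}$, scalar weights $\binom{k}{k}=1$, Hardy-space norm), and your telescoping argument is precisely the $m=1$ case of Agler's isometry lemma, proved from scratch instead of cited. So your route is the standard model-theoretic proof; what it buys is a self-contained verification of the lemma as stated (including the identification of $E$ with the defect space), whereas the paper's approach defers to the literature for the clean contraction case and reserves its own computation for the hypercontraction generalization it needs in Section 4.
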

According to the model theorem, if $T\in\mathcal{B}_{1}(\mathbb{D})$ is a $n-$hypercontraction operator, then there exists a Hilbert space $E$ such that $T\sim_{u}M^{*}_{z,E}|_{K}$. In the language of  holomorphic vector bundles: there exists some holomorphic vector bundle $\mathcal{E}$ such that $E_{T}\sim_{u}E_{M^{*}_{z}}\otimes\mathcal{E}$. The concrete proof is given when we prove the main conclusion.

Clearly, an eigenvector of $T$ is an eigenvector of $M^{*}_{z}$, and the eigenspace $\ker(M^{*}_{z}-w)$ of $M^{*}_{z}$ in the Hardy space $H^{2}$ is spanned by the reproducing kernel $K(\cdot,\bar{w})$, where recall $K(z,w)=\frac{1}{1-z\bar{w}}$. So in the case of the backward shift $M^{*}_{z,E}$ in $H^{2}_{E}$,$$\ker(M^{*}_{z,E}-w)=\{K(z,\bar{w})e:\ e\in E\}.$$
Thus, the eigenspaces of $T$ are given by
$$\ker(T-w)=\{K(z,\bar{w})e:\ e\in E(w)\},$$where $E(w)$ are some subspaces of the space $E$. The vector valued Hardy space $H^{2}_{E}$ is a natural realization of the tensor product $H^{2}\otimes E$, so we can write$$\ker(T-w)=K(z,\bar{w})\otimes E(w).$$

Based on Lemma \ref{mainlemma}, we can obtain the following lemmas.

\begin{lem}\label{J.J.K.1}
Let $T$ and $\tilde{T}$ be operators in $\mathcal{FB}_{n}(\Omega)$ that satisfy the Condition (A). Let $X$ be a bounded and invertible linear operator such that
$XT=\tilde{T}X$. Then two operators $T_{1}$ and
$\tilde{T}_{1}$ in $\mathcal{OFB}_{n}(\Omega)$ whose non-zero elements are same with $T$ and $\tilde{T}$, respectively, are similar.
\end{lem}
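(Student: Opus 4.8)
The plan is to prove this by transitivity of similarity, using Lemma \ref{mainlemma} as the bridge between each $\mathcal{FB}_{n}(\Omega)$ operator and its $\mathcal{OFB}_{n}(\Omega)$ truncation. Recall that $T_{1}$ is the operator in $\mathcal{OFB}_{n}(\Omega)$ obtained from $T$ by retaining only the primary and secondary diagonal entries $T_{i,i}$ and $T_{i,i+1}$ and setting the remaining entries to zero, and likewise $\tilde{T}_{1}$ is the $\mathcal{OFB}_{n}(\Omega)$ truncation of $\tilde{T}$. Since $T\in\mathcal{FB}_{n}(\Omega)$ and only its diagonal and super-diagonal entries survive, $T_{1}$ inherits the flag relation $T_{i,i}T_{i,i+1}=T_{i,i+1}T_{i+1,i+1}$ and so indeed lies in $\mathcal{OFB}_{n}(\Omega)$, and similarly for $\tilde{T}_{1}$.

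First I would apply Lemma \ref{mainlemma} with $T_{1}$ in the role of the $\mathcal{OFB}_{n}(\Omega)$ operator and $T$ in the role of the $\mathcal{FB}_{n}(\Omega)$ operator satisfying Condition (A): because $T_{1}$ and $T$ share the same diagonal and super-diagonal entries and $T$ satisfies Condition (A), the hypotheses of the lemma are met, and it produces an invertible operator $Y=\textbf{I}+K$ with $YT_{1}=TY$, so that $T_{1}\sim_{s}T$. Applying the same lemma to the pair $(\tilde{T}_{1},\tilde{T})$ gives an invertible $\tilde{Y}$ with $\tilde{Y}\tilde{T}_{1}=\tilde{T}\tilde{Y}$, hence $\tilde{T}_{1}\sim_{s}\tilde{T}$.

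Next I would splice these two similarities together with the given relation $XT=\tilde{T}X$. Writing $T_{1}=Y^{-1}TY$, $T=X^{-1}\tilde{T}X$, and $\tilde{T}=\tilde{Y}\tilde{T}_{1}\tilde{Y}^{-1}$ and substituting yields $T_{1}=Y^{-1}X^{-1}\tilde{Y}\tilde{T}_{1}\tilde{Y}^{-1}XY$. Setting $Z:=\tilde{Y}^{-1}XY$, which is invertible as a product of invertible operators with $Z^{-1}=Y^{-1}X^{-1}\tilde{Y}$, one obtains $ZT_{1}=\tilde{T}_{1}Z$, i.e.\ $T_{1}\sim_{s}\tilde{T}_{1}$, as desired.

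The argument is essentially bookkeeping once Lemma \ref{mainlemma} is in hand, so I do not expect a genuine obstacle. The only point requiring care is verifying that the hypotheses of Lemma \ref{mainlemma} apply verbatim in each invocation: that $T_{1}$ (respectively $\tilde{T}_{1}$) really is the $\mathcal{OFB}_{n}(\Omega)$ operator whose nonzero entries coincide with the diagonal and super-diagonal of $T$ (respectively $\tilde{T}$), and that Condition (A) on $T$ and $\tilde{T}$ is exactly the structural hypothesis the lemma demands. Both are immediate from the definitions of $\mathcal{OFB}_{n}(\Omega)$ and of Condition (A), so the proof reduces to the transitivity chain $T_{1}\sim_{s}T\sim_{s}\tilde{T}\sim_{s}\tilde{T}_{1}$ recorded above.
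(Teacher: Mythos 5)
Your proposal is correct and is essentially identical to the paper's own proof: both apply Lemma \ref{mainlemma} to each of the pairs $(T_{1},T)$ and $(\tilde{T}_{1},\tilde{T})$ and then conjugate through the given relation $XT=\tilde{T}X$ to obtain the similarity $T_{1}\sim_{s}\tilde{T}_{1}$. The only difference is cosmetic (your intertwiners are the inverses of the paper's), and your extra check that the truncations genuinely lie in $\mathcal{OFB}_{n}(\Omega)$ is a harmless refinement the paper leaves implicit.
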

\begin{proof}
By Lemma \ref{mainlemma}, there exist two bounded and invertible operators $Y$ and $Z$ such that
$$YT=T_{1}Y\ \ \mbox{and}\ Z\tilde{T}=\tilde{T}_{1}Z.$$
Then we can see that
$$T_{1}=YTY^{-1}=YX^{-1}\tilde{T}XY^{-1}=YX^{-1}Z^{-1}\tilde{T}_{1}ZXY^{-1},$$
that is, $T_{1}$ is similar to $\tilde{T}_{1}$.
\end{proof}

\begin{lem}\label{maincor}
Let $T$ and $\tilde{T}$ be operators in $\mathcal{OFB}_{n}(\Omega)$. If $T$ is similar to $\tilde{T}$, then two operators in $\mathcal{FB}_{n}(\Omega)$ that satisfy the Condition (A) and the elements of the primary and secondary diagonals are the same as those of $T$ and $\tilde{T}$, respectively, are similar.
\end{lem}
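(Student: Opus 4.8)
The plan is to mirror the argument of Lemma \ref{J.J.K.1}, exchanging the roles of the $\mathcal{OFB}_{n}(\Omega)$ and $\mathcal{FB}_{n}(\Omega)$ operators, and to reduce everything to the single similarity produced by Lemma \ref{mainlemma}. Denote by $T'$ and $\tilde{T}'$ the two operators in $\mathcal{FB}_{n}(\Omega)$ satisfying Condition (A) whose primary and secondary diagonal entries agree with those of $T$ and $\tilde{T}$, respectively; the goal is to exhibit a bounded invertible operator implementing $T'\sim_{s}\tilde{T}'$.

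First I would apply Lemma \ref{mainlemma} twice. Since $T\in\mathcal{OFB}_{n}(\Omega)$ and $T'\in\mathcal{FB}_{n}(\Omega)$ satisfies Condition (A) and shares the primary and secondary diagonals of $T$, the hypotheses of Lemma \ref{mainlemma} are met; this yields a bounded invertible operator $Y=\textbf{I}+K$ with $YT=T'Y$, that is, $T'=YTY^{-1}$. Likewise, applying Lemma \ref{mainlemma} to the pair $(\tilde{T},\tilde{T}')$ produces an invertible $Z$ with $Z\tilde{T}=\tilde{T}'Z$, that is, $\tilde{T}'=Z\tilde{T}Z^{-1}$.

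Next I would invoke the hypothesis $T\sim_{s}\tilde{T}$ to fix an invertible $X$ with $XT=\tilde{T}X$, so $\tilde{T}=XTX^{-1}$. Combining the three relations and substituting $T=Y^{-1}T'Y$ gives
\begin{equation*}
\tilde{T}'=Z\tilde{T}Z^{-1}=ZX\,T\,X^{-1}Z^{-1}=ZX\,(Y^{-1}T'Y)\,X^{-1}Z^{-1}=(ZXY^{-1})\,T'\,(ZXY^{-1})^{-1},
\end{equation*}
so that $W:=ZXY^{-1}$ is bounded and invertible and satisfies $WT'=\tilde{T}'W$. Hence $T'\sim_{s}\tilde{T}'$, as required.

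I expect no serious obstacle here: the entire analytic content is already packaged in Lemma \ref{mainlemma}, and the remaining work is the routine bookkeeping of composing three similarities, exactly as in Lemma \ref{J.J.K.1}. The one point deserving a word of care is that an operator in $\mathcal{FB}_{n}(\Omega)$ with Condition (A) and prescribed primary and secondary diagonals is not unique, since the multipliers $\phi_{i,j}(T_{i,i})\in\{T_{i,i}\}'$ may vary; however, Lemma \ref{mainlemma} applies to \emph{any} such choice, so every such operator is similar to the corresponding operator in $\mathcal{OFB}_{n}(\Omega)$, and the conclusion is therefore independent of how $T'$ and $\tilde{T}'$ are selected.
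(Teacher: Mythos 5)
Your proposal is correct and matches the paper's own proof: both apply Lemma \ref{mainlemma} to each pair $(T,T')$ and $(\tilde{T},\tilde{T}')$ to get invertible intertwiners $Y,Z$, then compose with the given similarity $X$ to obtain $\tilde{T}'=(ZXY^{-1})T'(ZXY^{-1})^{-1}$, exactly as in the paper (which writes the same identity in the form $T_{1}=YX^{-1}Z^{-1}\tilde{T}_{1}ZXY^{-1}$). Your closing remark that the conclusion holds for any admissible choice of $T'$ and $\tilde{T}'$, since Lemma \ref{mainlemma} applies to each, is a correct and worthwhile clarification that the paper leaves implicit.
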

\begin{proof}
Let $T_{1}$ and $\tilde{T}_{1}$ be two operators in the $\mathcal{FB}_{n}(\Omega)$ whose elements on the primary and secondary diagonals are same with that of $T$ and $\tilde{T}$, respectively, and $T_{1}$, $\tilde{T}_{1}$ satisfy the Condition (A). Following $T$ is similar to $\tilde{T}$, there exists a bounded and invertible operator $X$ such that $XT=\tilde{T}X$. By Lemma \ref{mainlemma}, we can find two bounded and invertible operators $Y$ and $Z$ satisfying $YT=T_{1}Y$ and $Z\tilde{T}=\tilde{T}_{1}Z$. Then we can see  $$T_{1}=YTY^{-1}=YX^{-1}\tilde{T}XY^{-1}=YX^{-1}Z^{-1}\tilde{T}_{1}ZXY^{-1},$$
that is, $T_{1}$ is similar to $\tilde{T}_{1}$.
\end{proof}

With the above conclusions, we can reduce the similarity problem of operators in  $\mathcal{FB}_{n}(\Omega)$ to that of operators in $\mathcal{OFB}_{n}(\Omega)$. The following theorem is our similar classification for a large class of operators with a flag structure.
%and we reduce the number of condition (2) of Theorem 4.5 in \cite{JKSX} from $\frac{(n-2)(n-1)}{2}$ to $n-1$.
\begin{thm}\label{main2}
Let $T=\big((T_{i,j})\big)_{n\times n}$ and $\tilde{T}=\big((\tilde{T}_{i,j})\big)_{n\times n}$ be operators in $\mathcal{FB}_{n}(\mathbb{D})$, where $T$ and $\tilde{T}$ satisfy the Condition (A),\
%where $T_{i,j}=\phi_{i,j}(T_{i,i})T_{i,i+1}T_{i+1,i+2}\cdots T_{j-1,j}$, $\phi_{i,j}\in\{T_{i,i}\}'$ and $\tilde{T}_{i,j}=\tilde{\phi}_{i,j}(T_{i,i})T_{i,i+1}T_{i+1,i+2}\cdots T_{j-1,j}$, $\tilde{\phi}_{i,j}\in\{T_{i,i}\}'$
$\tilde{T}_{i,i}=(M_{z}^{*},\mathcal{H}_{K_{i}})$ and
$K_{i}(z,w)=\frac{1}{(1-z\bar{w})^{\lambda_{i}}}$ for some $\lambda_{i}\in\mathbb{N}$ and for all $1\le i\le n$. Suppose that following statements hold:
\begin{enumerate}
\item Each $T_{i,i}\in\mathcal{L}(\mathcal{H}_{i})$ is a $m_{i}$-hypercontraction for $1\le i\le n$;

\item There exists a sequence of bounded and invertible holomorphic functions on $\mathbb{D}$, denoted by $\{\phi_{i}\}_{i=1}^{n-1}$, such that for all $1\le i\le n-1$ and $w\in\mathbb{D}$,
    %$$|\phi_{i}(w)|^{2}\theta_{i,i+1}(T)=\theta_{i,i+1}(\tilde{T}),$$
%that is,
$$|\phi_{i}(w)|^{2}\frac{\Vert T_{i,i+1}t_{i+1}(w)\Vert ^{2}}{\Vert t_{i+1}(w)\Vert ^{2}}=\frac{\Vert \tilde{T}_{i,i+1}K_{i+1}(\cdot,\bar{w})\Vert ^{2}}{\Vert K_{i+1}(\cdot,\bar{w})\Vert ^{2}},$$
where $t_{i}(w)\in \ker (T_{i,i}-w)$ and $t_{i}(w)=T_{i,i+1}t_{i+1}(w),\ K_{i}(\cdot,\bar{w})=\tilde{T}_{i,i+1}K_{i+1}(\cdot,\bar{w})$.
\end{enumerate}
Then $T\sim_{s}\tilde{T}$ if and only if $\mathcal{K}_{\tilde{T}_{n,n}}-\mathcal{K}_{T_{n,n}}=\frac{\partial^{2}}{\partial\bar{w}\partial w}\psi$, for some bounded subharmonic function $\psi$ defined on $\mathbb{D}$.
\end{thm}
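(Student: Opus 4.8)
The plan is to reduce the whole problem to a similarity statement in $\mathcal{OFB}_n(\mathbb{D})$ and then to classify those operators by combining the propagated curvature condition with Agler's model. Since $T$ and $\tilde T$ satisfy Condition (A), Lemmas~\ref{J.J.K.1} and~\ref{maincor} together show that $T\sim_s\tilde T$ holds if and only if the associated operators $T^{o},\tilde T^{o}\in\mathcal{OFB}_n(\mathbb{D})$, obtained by discarding all entries beyond the secondary diagonal, are similar. Hence it suffices to treat $T^{o}$ and $\tilde T^{o}$, whose similarity is controlled only by the line bundles $E_{T_{i,i}}$ and the connecting maps $T_{i,i+1}$ along the chain $t_i=T_{i,i+1}t_{i+1}$. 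Note that Condition (2) is stated exactly in terms of this secondary-diagonal data, so it is a condition on $T^{o}$ and $\tilde T^{o}$.

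The geometric heart is that Condition (2) forces the curvature defect $\mathcal{K}_{\tilde T_{i,i}}-\mathcal{K}_{T_{i,i}}$ to be independent of $i$. Setting $G_i(w):=\|t_i(w)\|^2/\|K_i(\cdot,\bar w)\|^2$ and using $T_{i,i+1}t_{i+1}=t_i$ and $\tilde T_{i,i+1}K_{i+1}(\cdot,\bar w)=K_i(\cdot,\bar w)$, Condition (2) reads $|\phi_i|^2=G_{i+1}/G_i$. Applying $\frac{\partial^2}{\partial\bar w\partial w}\log$, using that $\phi_i$ is holomorphic together with $\mathcal{K}_{T_{i,i}}=-\frac{\partial^2}{\partial\bar w\partial w}\log\|t_i\|^2$ and $\mathcal{K}_{\tilde T_{i,i}}=-\frac{\partial^2}{\partial\bar w\partial w}\log\|K_i(\cdot,\bar w)\|^2$, yields
$$\mathcal{K}_{\tilde T_{i,i}}-\mathcal{K}_{T_{i,i}}=\mathcal{K}_{\tilde T_{i+1,i+1}}-\mathcal{K}_{T_{i+1,i+1}},\qquad 1\le i\le n-1.$$
Thus the hypothesis $\mathcal{K}_{\tilde T_{n,n}}-\mathcal{K}_{T_{n,n}}=\frac{\partial^2}{\partial\bar w\partial w}\psi$ is equivalent to the same identity holding at every diagonal entry with one and the same bounded subharmonic $\psi$.

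For the forward implication, if $T^{o}\sim_s\tilde T^{o}$ then by Lemma~\ref{J.J.K.M} the intertwiner is upper triangular with invertible diagonal blocks, so that $T_{n,n}\sim_s\tilde T_{n,n}=S^*_{\lambda_n,\mathbb{C}}$; since $T_{n,n}$ is a $\lambda_n$-hypercontraction by Condition (1), Lemma~\ref{H.S} gives exactly the asserted curvature condition. For the converse, the curvature hypothesis together with the previous paragraph yields $\mathcal{K}_{\tilde T_{i,i}}-\mathcal{K}_{T_{i,i}}=\frac{\partial^2}{\partial\bar w\partial w}\psi$ for every $i$, so Lemma~\ref{H.S} (again with Condition (1)) produces bounded invertible $X_i$ with $X_iT_{i,i}=\tilde T_{i,i}X_i$. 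Being intertwiners of operators in $\mathcal{B}_1(\mathbb{D})$, each $X_i$ carries eigenvectors to eigenvectors, $X_it_i(w)=f_i(w)K_i(\cdot,\bar w)$ with $f_i$ holomorphic and non-vanishing.

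Finally I would assemble a diagonal similarity $X=X_1\oplus\cdots\oplus X_n$. It intertwines $T^{o}$ and $\tilde T^{o}$ if and only if the off-diagonal relations $X_iT_{i,i+1}=\tilde T_{i,i+1}X_{i+1}$ hold; evaluating on $t_{i+1}(w)$ and using $\tilde T_{i,i+1}K_{i+1}=K_i$ shows this amounts to $f_i=f_{i+1}$ for all $i$. To force this, I replace $X_i$ by $M^*_{\theta_i}X_i$ for invertible multipliers $\theta_i\in\mathrm{Mult}(\mathcal{H}_{K_i})$, which commute with $\tilde T_{i,i}=M^*_z$ and multiply $f_i$ by $\overline{\theta_i(\bar w)}$. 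Such $\theta_i$ exist precisely because each ratio $f_i/f_{i+1}$ is an invertible multiplier: boundedness and invertibility of $X_i$ give that $|f_i(w)|^2/G_i(w)$ is bounded above and below uniformly in $w$, while $G_i/G_{i+1}=|\phi_i|^{-2}$ is bounded above and below since $\phi_i$ is bounded and invertible, so $f_i/f_{i+1}$ is holomorphic and bounded above and below in modulus. Equalizing the $f_i$ produces a bounded invertible diagonal $X$ with $XT^{o}=\tilde T^{o}X$, and Lemma~\ref{maincor} lifts this to $T\sim_s\tilde T$. The main obstacle is exactly this last gluing step: the diagonal similarities $X_i$ are determined only up to the commutant of $M^*_z$, and Condition (2) is precisely what guarantees that the multiplier corrections needed to align them along the secondary diagonal are themselves invertible.
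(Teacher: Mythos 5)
Your proposal is correct, and its skeleton coincides with the paper's: the reduction to tridiagonal representatives via Lemmas \ref{J.J.K.1} and \ref{maincor}, the necessity argument via the rigidity Lemma \ref{J.J.K.M} together with Lemma \ref{H.S}, and the propagation $\mathcal{K}_{\tilde T_{i,i}}-\mathcal{K}_{T_{i,i}}=\mathcal{K}_{\tilde T_{i+1,i+1}}-\mathcal{K}_{T_{i+1,i+1}}$ extracted from Condition (2) are all exactly the paper's steps. Where you genuinely diverge is the sufficiency direction. The paper routes it through Agler's model: it builds unitaries $U_i$ from the defect operators $D_{m_i,T_{i,i}}$ (Lemma \ref{isometry}), then isometries $V_i$ whose very definition absorbs the products $\prod_{k<i}\phi_k$ and the vector $\chi(w)=D_{m_1,T_{1,1}}t_1(w)$, conjugates $T$ into $\bigoplus_i M_z^*|_{\mathcal{N}_i}$, and only then invokes Lemma \ref{H.S} to get diagonal intertwiners $X_i:\mathcal{H}_{K_i}\to\mathcal{N}_i$; the off-diagonal relations are checked using the assertion that one single holomorphic function $\lambda(w)$ satisfies $X_iK_i(\cdot,\bar w)=\lambda(w)\prod_{k=1}^{i-1}\phi_k(w)K_i(\cdot,\bar w)\otimes\chi(w)$ for all $i$ simultaneously. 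You skip the model entirely: you apply Lemma \ref{H.S} to each pair $(T_{i,i},\tilde T_{i,i})$ directly and then repair the resulting diagonal similarities by commutant corrections $M^*_{\theta_i}$, proving the mismatch functions $f_i/f_{i+1}$ are invertible multipliers because $|f_i|^2/G_i$ is bounded above and below (boundedness and invertibility of $X_i$) while $G_i/G_{i+1}=|\phi_i|^{-2}$ is bounded above and below (boundedness and invertibility of $\phi_i$). This is not only shorter but arguably more complete: in the paper each $X_i$ produced by Lemma \ref{H.S} a priori comes with its own eigenvalue function $\lambda_i(w)$, and the claim that these may be taken equal is precisely the gluing problem you solve; your boundedness estimate is the missing justification, and it is also the only place where the standing hypothesis that the $\phi_i$ are bounded \emph{and invertible} (rather than merely holomorphic and non-vanishing) actually gets used. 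What the paper's longer route buys is the explicit unitary realization $T_{i,i}\sim_u M_z^*|_{\mathcal{N}_i}$ inside a concrete model space, which makes visible how Condition (2) manufactures the isometries $V_i$. Finally, note that you inherit, rather than introduce, two imprecisions already present in the paper: Lemma \ref{H.S} is invoked as though the hypercontractivity order $m_i$ agreed with the kernel exponent $\lambda_i$, and as though the bounded subharmonic witness could sit on either side of the curvature difference.
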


\begin{proof}
Based on Lemma \ref{J.J.K.1} and \ref{maincor}, all we have to do is consider whether the conclusion is true when $T_{i,j},\ \tilde{T}_{i,j}=0,\ j-i\ge2$.
We first consider the necessity and assume that $XT=\tilde{T}X$ for some bounded and invertible operator $X$. Then by Lemma \ref{J.J.K.M}, $X$ and $X^{-1}$ are upper-triangular, and $X_{i,i}$, $1\le i\le n$, is invertible. So we have
%$$\tiny{
%\begin{pmatrix}
%X_{1,1}&0&0&\cdots&0\\
%0&X_{2,2}&0&\cdots&0\\
%\vdots&\ddots&\ddots&\dots&\vdots\\
%0&\cdots&0&X_{n-1,n-1}&0\\
%0&\cdots&\cdots&0&X_{n,n}
%\end{pmatrix}
%\begin{pmatrix}
%T_{1,1}&T_{1,2}&0&\cdots&0\\
%0&T_{2,2}&T_{2,3}&\cdots&0\\
%\vdots&\ddots&\ddots&\ddots&\vdots\\
%0&\cdots&0&T_{n-1,n-1}&T_{n-1,n}\\
%0&\cdots&\cdots&0&T_{n,n}
%\end{pmatrix}}
%$$
%$$
%\tiny{=
%\begin{pmatrix}
%\tilde{T}_{1,1}&\tilde{T}_{1,2}&0&\cdots&0\\
%0&\tilde{T}_{2,2}&\tilde{T}_{2,3}&\cdots&0\\
%\vdots&\ddots&\ddots&\ddots&\vdots\\
%0&\cdots&0&\tilde{T}_{n-1,n-1}&\tilde{T}_{n-1,n}\\
%0&\cdots&\cdots&0&\tilde{T}_{n,n}
%\end{pmatrix}
%\begin{pmatrix}
%X_{1,1}&0&0&\cdots&0\\
%0&X_{2,2}&0&\cdots&0\\
%\vdots&\ddots&\ddots&\dots&\vdots\\
%0&\cdots&0&X_{n-1,n-1}&0\\
%0&\cdots&\cdots&0&X_{n,n}
%\end{pmatrix}
%},
%$$
%which is equivalent to
$X_{i,i}T_{i,i}=\tilde{T}_{i,i}X_{i,i},\ 1\le i\le n$.
%and $ X_{i,i}T_{i,i+1}=\tilde{T}_{i,i+1}X_{i+1,i+1},\ 1\le i\le n-1$.
%Furthermore, $X_{i,i},\ 1\le i\le n$, is bounded and invertible.
Now, since $T_{n,n}$ is a $m_{n}$-hypercontraction, by Lemma \ref{H.S}, there exists a bounded subharmonic function $\psi$ defined on $\mathbb{D}$ such that $$\mathcal{K}_{\tilde{T}_{n,n}}-\mathcal{K}_{T_{n,n}}=\frac{\partial^{2}}{\partial\bar{w}\partial w}\psi.$$

Conversely, we start by defining the operators $U_{i}:\mathcal{H}_{i}\rightarrow\mathcal{M}_{i}$ by $$U_{i}x=\sum\limits_{n=0}^{\infty}\frac{z^{n}}{\Vert z^{n}\Vert^{2}_{i}}\otimes D_{m_{i},T_{i,i}}T_{i,i}^{n}x,\ \mbox{for\ all}\ x\in\mathcal{H}_{i},$$ where $D_{m_{i},T_{i,i}}=\left(\sum\limits_{k=0}^{m_{i}}(-1)^{k}\binom{m_{i}}{k}(T_{i,i}^{*})^{k}T_{i,i}^{k}\right)^{\frac{1}{2}}$ are the defect operators,\ $\mathcal{M}_{i}:=\overline{ran U_{i}}$, and $\Vert z^{n}\Vert_{i}$ denotes the norm of $z^{n}$ on the space $\mathcal{H}_{K_{i}}$.

By using the result of Lemma \ref{isometry}, we calculate the norm of $U_{i}x$ for all $1\le i\le n$ and we obtain
\begin{equation*}
\begin{aligned}
\Vert U_{i}x\Vert^{2}
&=\langle\sum\limits_{n=0}^{\infty}\frac{z^{n}}{\Vert z^{n}\Vert^{2}_{i}}\otimes D_{m_{i},T_{i,i}}T_{i,i}^{n}x,\ \sum\limits_{m=0}^{\infty}\frac{z^{m}}{\Vert z^{m}\Vert^{2}_{i}}\otimes D_{m_{i},T_{i,i}}T_{i,i}^{m}x\rangle\\
\end{aligned}
\end{equation*}
\begin{equation*}
\begin{aligned}
\ \ \ \ \ \ \ &=\sum\limits_{n=0}^{\infty}\sum\limits_{m=0}^{\infty}\langle\frac{z^{n}}{\Vert z^{n}\Vert^{2}_{i}}\otimes D_{m_{i},T_{i,i}}T_{i,i}^{n}x,\ \frac{z^{m}}{\Vert z^{m}\Vert^{2}_{i}}\otimes D_{m_{i},T_{i,i}}T_{i,i}^{m}x\rangle\\
&=\sum\limits_{n=0}^{\infty}\frac{1}{\Vert z^{n}\Vert^{2}_{i}}\langle D_{m_{i},T_{i,i}}T_{i,i}^{n}x,\ D_{m_{i},T_{i,i}}T_{i,i}^{n}x\rangle\\
&=\Vert x\Vert^{2}_{i}.
\end{aligned}
\end{equation*}
%where $\Vert x\Vert_{i}$ denotes the norm of $x$ on the space $\mathcal{H}_{K_{i}}$.
It follows that each $U_{i}$ is an unitary operator and satisfies $U_{i}T_{i,i}=M^{*}_{z}|_{\mathcal{M}_{i}}U_{i}$, $1\le i\le n$, by a simple calculation.
%Since $\tilde{T}\in\mathcal{CFB}_{n}(\mathbb{D}),\ \tilde{T}_{1,1}\tilde{T}_{1,2}=\tilde{T}_{1,2}\tilde{T}_{2,2}$ and  $K_{1}(\cdot,\bar{w})=\tilde{T}_{1,2}K_{2}(\cdot,\bar{w})$
%for all $w\in\mathbb{D}$.
Then follwing $t_{i}(w)\in \ker(T_{i,i}-w)$ and $t_{i}(w)=T_{i,i+1}t_{i+1}(w)$ for $w\in\mathbb{D}$, we can obtain
\begin{equation*}
\begin{aligned}
U_{i}t_{i}(w)&=\sum\limits_{n=0}^{\infty}\frac{z^{n}}{\Vert z^{n}\Vert^{2}_{i}}\otimes D_{m_{i},T_{i,i}}T_{i,i}^{n}t_{i}(w)\\
&=\sum\limits_{n=0}^{\infty}\frac{z^{n}w^{n}}{\Vert z^{n}\Vert^{2}_{i}}\otimes D_{m_{i},T_{i,i}}t_{i}(w)\\
&=K_{i}(z,\bar{w})\otimes D_{m_{i},T_{i,i}}t_{i}(w),
\end{aligned}
\end{equation*}
for $1\le i\le n$. Denote $\chi(w):=D_{m_{1},T_{1,1}}t_{1}(w)\in\mathcal{H}_{1}.$

Next we define the operators $V_{i}$ as
\begin{equation*}
V_{1}t_{1}(w)=K_{1}(\cdot,\bar{w})\otimes \chi(w),\ w\in\mathbb{D},
\end{equation*}
\begin{equation*}
V_{i}t_{i}(w)=\prod\limits_{k=1}^{i-1}\phi_{k}(w)K_{i}(\cdot,\bar{w})\otimes \chi(w),\ w\in\mathbb{D},\ 2\le i \le n.
\end{equation*}
By a calculation, we can see that
\begin{equation*}
\begin{aligned}
%\Vert T_{1,2}t_{2}(w)\Vert^{2}&=
\Vert t_{1}(w)\Vert^{2}
%&=\Vert U_{1}t_{1}(w)\Vert ^{2}\\
&=\Vert K_{1}(\cdot,\bar{w})\otimes D_{m_{1},T_{1,1}}t_{1}(w)\Vert^{2}\\
%&=\Vert \tilde{T}_{1,2}K_{2}(\cdot,\bar{w})\otimes D_{\lambda_{1},T_{1,1}}t_{1}(w)\Vert^{2}\\
&=\Vert K_{1}(\cdot,\bar{w})\otimes \chi(w)\Vert^{2}\\
%&=\Vert \tilde{T}_{1,2}K_{2}(\cdot,\bar{w})\Vert^{2}\Vert e(w)\Vert^{2}\\
&=\Vert K_{1}(\cdot,\bar{w})\Vert^{2}\Vert \chi(w)\Vert^{2}\\
&=\Vert V_{1}t_{1}(w)\Vert^{2}.
\end{aligned}
\end{equation*}
According to $$|\phi_{i}(w)|^{2}\frac{\Vert T_{i,i+1}t_{i+1}(w)\Vert ^{2}}{\Vert t_{i+1}(w)\Vert ^{2}}=\frac{\Vert \tilde{T}_{i,i+1}K_{i+1}(\cdot,\bar{w})\Vert ^{2}}{\Vert K_{i+1}(\cdot,\bar{w})\Vert ^{2}},$$
for the bounded and invertible holomorphic function $\phi_{i}$, $1\le i\le n-1$, we get  that for $2\le i\le n$,
$$\Vert t_{i}(w)\Vert^{2}=\prod\limits_{k=1}^{i-1}|\phi_{k}(w)|^{2}K_{i}(\bar{w},\bar{w})\Vert \chi(w)\Vert^{2}=\Vert V_{i}t_{i}(w)\Vert^{2}.$$
Therefore, the operators $V_{i}$, $1\le i\le n$, are isometries. Moreover, setting $\mathcal{N}_{i}=\overline{\mbox{ran} V_{i}}$, the isometries $V_{i}\in\mathcal{L}(\mathcal{H}_{i},\mathcal{N}_{i})$ become unitary operators and
\begin{equation}
\tiny{
\begin{pmatrix}
V_{1}&0&\cdots&0&0\\
0&V_{2}&\cdots&0&0\\
\vdots&\ddots&\ddots&\vdots&\vdots\\
0&\cdots&\cdots&V_{n-1}&0\\
0&\cdots&\cdots&0&V_{n}
\end{pmatrix}
\begin{pmatrix}
T_{1,1}&T_{1,2}&0&\cdots&0\\
0&T_{2,2}&T_{2,3}&\cdots&0\\
\vdots&\ddots&\ddots&\ddots&\vdots\\
0&\cdots&\cdots&T_{n-1,n-1}&T_{n-1,n}\\
0&\cdots&\cdots&0&T_{n,n}
\end{pmatrix}
\begin{pmatrix}
V_{1}^{*}&0&\cdots&0&0\\
0&V_{2}^{*}&0\cdots&0\\
\vdots&\ddots&\ddots&\vdots&\vdots\\
0&\cdots&\cdots&V_{n-1}^{*}&0\\
0&\cdots&\cdots&0&V_{n}^{*}
\end{pmatrix}}
\end{equation}

\begin{equation*}
\footnotesize{
\begin{aligned}
&=\begin{pmatrix}
V_{1}U_{1}^{*}M_{z}^{*}|_{\mathcal{M}_{1}}U_{1}V_{1}^{*}&V_{1}T_{1,2}V_{2}^{*}&0&\cdots&0\\
0&V_{2}U_{2}^{*}M_{z}^{*}|_{\mathcal{M}_{2}}U_{2}V_{2}^{*}&V_{2}T_{2,3}V_{3}^{*}&\cdots&0\\
\vdots&\ddots&\ddots&\ddots&\vdots\\
0&\cdots&\cdots&0&V_{n}U_{n}^{*}M_{z}^{*}|_{\mathcal{M}_{n}}U_{n}V_{n}^{*}
\end{pmatrix}\\
&=\begin{pmatrix}
M_{z}^{*}|_{\mathcal{N}_{1}}&V_{1}T_{1,2}V_{2}^{*}&0&\cdots&0\\
0&M_{z}^{*}|_{\mathcal{N}_{2}}&V_{2}T_{2,3}V_{3}^{*}&\cdots&0\\
\vdots&\ddots&\ddots&\ddots&\vdots\\
0&\cdots&\cdots&M_{z}^{*}|_{\mathcal{N}_{n-1}}&V_{n-1}T_{n-1,n}V_{n}^{*}\\
0&\cdots&\cdots&0&M_{z}^{*}|_{\mathcal{N}_{n}}
\end{pmatrix}.
\end{aligned}}
\end{equation*}
From this, we deduce that $$T_{i,i}\sim_{u}M_{z}^{*}|_{\mathcal{N}_{i}}.$$

In addition, for all $1\le i\le n-1$,
$$\frac{\partial^{2}}{\partial\bar{w}\partial w}\log\Bigg(|\phi_{i}(w)|^{2}\frac{\Vert T_{i,i+1}t_{i+1}(w)\Vert ^{2}}{\Vert \tilde{T}_{i,i+1}K_{i+1}(\cdot,\bar{w})\Vert ^{2}}\Bigg)=\frac{\partial^{2}}{\partial\bar{w}\partial w}\log\frac{\Vert t_{i+1}(w)\Vert ^{2}}{\Vert K_{i+1}(\cdot,\bar{w})\Vert ^{2}}$$
implies $$\mathcal{K}_{\tilde{T}_{i,i}}-\mathcal{K}_{T_{i,i}}=\mathcal{K}_{\tilde{T}_{i+1,i+1}}-\mathcal{K}_{T_{i+1,i+1}}.$$
Furthermore,
$$\mathcal{K}_{\tilde{T}_{1,1}}-\mathcal{K}_{T_{1,1}}=\mathcal{K}_{\tilde{T}_{2,2}}-\mathcal{K}_{T_{2,2}}=\cdots=\mathcal{K}_{\tilde{T}_{n,n}}-\mathcal{K}_{T_{n,n}}.$$

Since  $T_{i,i}\sim_{u}M_{z}^{*}|_{\mathcal{N}_{i}}$, $1\le i\le n$, we have
$$\mathcal{K}_{\tilde{T}_{i,i}}-\mathcal{K}_{M_{z}^{*}|_{\mathcal{N}_{i}}}=
\mathcal{K}_{\tilde{T}_{i,i}}-\mathcal{K}_{T_{i,i}}=
\mathcal{K}_{\tilde{T}_{n,n}}-\mathcal{K}_{T_{n,n}}
=\frac{\partial^{2}}{\partial\bar{w}\partial w}\psi.$$
Under this condition, it is shown that there exist bounded and invertible operators $X_{i}\in\mathcal{L}(\mathcal{H}_{K_{i}},\mathcal{N}_{i})$ such that $X_{i}\tilde{T}_{i,i}=M_{z}^{*}|_{\mathcal{N}_{i}}X_{i}$ for $1\le i \le n$. Then it follows for $1\le i\le n$ and $w\in\mathbb{D}$ that $X_{i}K_{i}(\cdot,\bar{w})\in \ker(M^{*}_{z}|_{\mathcal{N}_{i}}-w)$ by a simple calculation. According to the fact that for $2\le i\le n$ and $w\in\mathbb{D}$,
$$\ker(M_{z}^{*}|_{\mathcal{N}_{1}}-w)=\bigvee\limits_{w\in\mathbb{D}}K_{1}(\cdot,\bar{w})\otimes \chi(w)\ \mbox{and}\  \ker(M_{z}^{*}|_{\mathcal{N}_{i}}-w)=\bigvee\limits_{w\in\mathbb{D}}\prod\limits_{k=1}^{i-1}\phi_{k}(w)K_{i}(\cdot,\bar{w})\otimes \chi(w),$$it follows that there exists some $\lambda(w)\in\mbox{Hol}(\mathbb{D})$ such that
$$X_{1}K_{1}(\cdot,\bar{w})=\lambda(w)K_{1}(\cdot,\bar{w})\otimes \chi(w),$$
$$X_{i}K_{i}(\cdot,\bar{w})=\lambda(w)\prod\limits_{k=1}^{i-1}\phi_{k}(w)K_{i}(\cdot,\bar{w})\otimes \chi(w),\ 2\le i\le n,$$
for every $w\in\mathbb{D}$.

It can easily be seen that $$X=\begin{pmatrix}
X_{1}&0&0&\cdots&0\\
0&X_{2}&0&\cdots&0\\
\vdots&\ddots&\ddots&\ddots&\vdots\\
0&\cdots&\cdots&X_{n-1}&0\\
0&\cdots&\cdots&0&X_{n}
\end{pmatrix}$$ is a bounded and invertible operator. So to prove that $T$ is similar to $\tilde{T}$, we just have to verify $X_{i}\tilde{T}_{i,i+1}$ $=V_{i}T_{i,i+1}V^{*}_{i+1}X_{i+1}$, $1\le i\le n-1$.
By a direct calculation, we have
\begin{equation*}
\begin{aligned}
X_{1}\tilde{T}_{1,2}K_{2}(\cdot,\bar{w})
&=X_{1}K_{1}(\cdot,\bar{w})\\
&=\lambda(w)K_{1}(\cdot,\bar{w})\otimes \chi(w)\\
&=V_{1}(\lambda(w)t_{1}(w))\\
&=V_{1}T_{1,2}(\lambda(w)t_{2}(w))\\
&=V_{1}T_{1,2}V^{*}_{2}(\lambda(w)\phi_{1}(w)K_{1}(\cdot,\bar{w})\otimes \chi(w))\\
&=V_{1}T_{1,2}V^{*}_{2}X_{2}K_{2}(\cdot,\bar{w}),
\end{aligned}
\end{equation*}
which means $X_{1}\tilde{T}_{1,2}=V_{1}T_{1,2}V^{*}_{2}X_{2}.$
Similarly, for $2\le i\le n-1$, we have
\begin{equation*}
\begin{aligned}
V_{i}T_{i,i+1}V_{i+1}^{*}X_{i+1}K_{i+1}(\cdot,\bar{w})
&=V_{i}T_{i,i+1}V_{i+1}^{*}(\lambda(w)\prod\limits_{k=1}^{i}\phi_{k}(w)K_{i+1}(\cdot,\bar{w})\otimes \chi(w))\\
&=V_{i}T_{i,i+1}(\lambda(w)t_{i+1}(w))\\
&=\lambda(w)\prod\limits_{k=1}^{i-1}\phi_{k}(w)K_{i}(\cdot,\bar{w})\otimes \chi(w)\\
&=X_{i}K_{i}(\cdot,\bar{w})\\
&=X_{i}\tilde{T}_{i,i+1}K_{i+1}(\cdot,\bar{w}),
\end{aligned}
\end{equation*}
that is, $$X_{i}\tilde{T}_{i,i+1}=V_{i}T_{i,i+1}V_{i+1}^{*}X_{i+1}.$$
The proof is completed.
\end{proof}

\section{Application to weakly homogeneous operators}

In this section, we study a class of weakly homogenous operators in the class $\mathcal{FB}_{n}(\mathbb{D})$.

Let M$\ddot{o}$b denote the group of all biholomorphic automorphisms of the unit disc $\mathbb{D}$ and $\bar{\mathbb{D}}$ denote the unit closed disc. An operator $T$ in $\mathcal{L}(\mathcal{H})$ is said to be weakly homogeneous if $\sigma(T)\subseteq\bar{\mathbb{D}}$ and $\phi(T)$ is similar to $T$ for all $\phi$ in $\mbox{M}\ddot{o}\mbox{b}$(see \cite{BM1}).
%It is known that the spectrum of a weakly homogeneous operator is either the unit circle $\mathbb{T}$ or the closed unit disc $\bar{\mathbb{D}}$.
It is easy to verify that $T$ is weakly homogeneous if and only if $T^{*}$ is weakly homogeneous.

A function $g\in\mathcal{H}$ introduces a linear operator $C_{g}:\mathcal{H}\rightarrow\mathcal{H}$, defined by
$$C_{g}(f)=f\circ g,\ \mbox{for any}\ f\in\mathcal{H}.$$
Then the operator $C_{g}$ is called to be a composition operator. The boundedness of the composition operators with symbols of M$\ddot{o}$bius transformations plays an important role in the problem of weak homogeneity of multiplication operators $M_{z}$. If the composition operator $C_{\phi}$ is bounded for all $\phi\in\mbox{M$\ddot{o}$b}$, then the bounded multiplication operator $M_{z}$ is a weakly homogeneous operator. Therefore we can study the boundedness of composition operator $C_{\phi}$, $ \phi\in\mbox{M$\ddot{o}$b}$, which leads us to the conclusion of weak homogeneity of the bounded multiplication operator $M_{z}$. However, we should notice that, in general, the weak homogeneity of the multiplication operator $M_{z}$ fails to show the boundedness of composition operator $C_{\phi}$ for all $ \phi\in\mbox{M$\ddot{o}$b}$.

Recall that a Hilbert space $\mathcal{H}$ consisting of holomorphic functions on the unit disc $\mathbb{D}$ is said to be M$\ddot{o}$bius invariant if for each $\phi\in\mbox{M}\ddot{o}\mbox{b}$, $f\circ\phi\in\mathcal{H}$ whenever $f\in\mathcal{H}$.
By applying the closed graph theorem, it can be concluded that $\mathcal{H}$ is M$\ddot{o}$bius invariant if and only if the composition operator $C_{\phi}$ is bounded on $\mathcal{H}$ for each $\phi\in\mbox{M}\ddot{o}\mbox{b}$. Consequently, if the multiplication operator $M_{z}$ is bounded on some M$\ddot{o}$bius invariant Hilbert space $\mathcal{H}$, then $M_{z}$ is weakly homogeneous on $\mathcal{H}$.

Denote by $C(\bar{\mathbb{D}})$ the space of all continuous functions on $\bar{\mathbb{D}}$. If $\psi$ is an arbitrary function in $C(\bar{\mathbb{D}})\cap \mbox{Hol}(\mathbb{D})$, then it is easy to see that $\psi\in\mbox{H}^{\infty}(\mathbb{D})$.
By Lemma \ref{mainlemma}, we obtain the main result of this section as follows, which generalizes the Proposition 5.4 in \cite{JJ} with a different proof.

\begin{thm}\label{main1}
Let $M_{i,z}$ be the bounded multiplication operator on the M$\ddot{o}$bius invariant Hilbert space $\mathcal{H}_{K_{i}}$, where   $K_{i}(z,w)=\sum\limits_{k=0}^{\infty}a_{i,k}z^{k}\bar{w}^{k},\ a_{i,k}>0,\ z,w\in\mathbb{D},\ 1\le i\le n$ and $\lim\limits_{k\rightarrow\infty}k\sqrt{\frac{a_{i,k}}{a_{i+1,k}}}=\infty$ for $1\le i\le n-1$. Suppose that $\mathcal{H}_{K_{1}}\subseteq\mathcal{H}_{K_{2}}\subseteq\cdots\subseteq\mathcal{H}_{K_{n}}$ and $\mbox{Mult}(\mathcal{H}_{K_{i}})=\mbox{H}^{\infty}(\mathbb{D})$ for $1\le i\le n$.
%$\lim\limits_{n\rightarrow\infty}n\sqrt{\frac{a_{j,n}}{a_{j+1,n}}}=\infty$ for $1\le j\le n-1$.

Set$$T:=\begin{pmatrix}
M_{1,z}^{*}&M_{\psi_{1,2}}^{*}&T_{1,3}&\cdots&T_{1,n-1}&T_{1,n}\\
0&M_{2,z}^{*}&M_{\psi_{2,3}}^{*}&T_{2,4}&\cdots&T_{2,n}\\
\vdots&\ddots&\ddots&\ddots&\ddots&\vdots\\
0&\cdots&\cdots&M_{n-2,z}^{*}&M_{\psi_{n-2,n-1}}^{*}&T_{n-2,n}\\
0&\cdots&\cdots&0&M_{n-1,z}^{*}&M_{\psi_{n-1,n}}^{*}\\
0&\cdots&\cdots&0&0&M_{n,z}^{*}
\end{pmatrix},$$where
%\in\mathcal{CFB}_{n}(\mathbb{D})
%$T_{i,j}=\phi_{i}(M_{i,z}^{*})M_{\psi_{i,i+1}}^{*}\cdots M_{\psi_{j-1,j}}^{*},\phi_{i}\in\{M_{i,z}^{*}\}'$
$T$ satisfies the Condition (A) and
$\psi_{i,i+1}\in C(\bar{\mathbb{D}})\cap \mbox{Hol}(\mathbb{D}), 1\le i\le n-1$, is non-zero. The operator $T$ is weakly homogeneous if and only if each $\psi_{i,i+1}, 1\le i\le n-1$, is non-vanishing.
\end{thm}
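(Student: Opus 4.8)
Write $\Omega=\mathbb{D}$ throughout. Since $\mbox{Mult}(\mathcal{H}_{K_i})=\mbox{H}^{\infty}(\mathbb{D})$, each diagonal block $M_{i,z}^{*}$ has spectrum $\bar{\mathbb{D}}$, and because $T$ is block upper-triangular its spectrum is the union of these, so $\sigma(T)\subseteq\bar{\mathbb{D}}$. Every M\"{o}bius map $\phi$ is rational with its pole outside $\bar{\mathbb{D}}$, hence holomorphic on a neighbourhood of $\sigma(T)$, so $\phi(T)$ is defined by the analytic functional calculus. It therefore remains to prove that $\phi(T)\sim_s T$ for every M\"{o}bius map $\phi$ if and only if every $\psi_{i,i+1}$ has no zero in $\bar{\mathbb{D}}$; weak homogeneity of $T$ follows at once (and, if convenient, one may equally argue with $T^{*}$, since $T$ and $T^{*}$ are simultaneously weakly homogeneous).

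\textbf{Structure of $\phi(T)$.} The flag relation $T_{i,i}T_{i,i+1}=T_{i,i+1}T_{i+1,i+1}$ gives $T_{i,i}^{k}T_{i,i+1}=T_{i,i+1}T_{i+1,i+1}^{k}$, and in an upper-triangular matrix the $(i,i+1)$-entry of $T^{k}$ involves only the corner indexed by $\{i,i+1\}$; hence the functional calculus makes $\phi(T)$ again upper-triangular with diagonal $\phi(M_{i,z}^{*})$ and superdiagonal $\phi'(T_{i,i})T_{i,i+1}=M_{\phi'}^{*}M_{\psi_{i,i+1}}^{*}=M_{\psi_{i,i+1}\phi'}^{*}$, its higher entries being fixed by Condition (A); in particular $\phi(T)$ again satisfies Condition (A). M\"{o}bius invariance of $\mathcal{H}_{K_i}$ means $C_{\phi}$ is bounded and invertible there with $C_{\phi}M_{i,z}C_{\phi}^{-1}=M_{z\circ\phi}=\phi(M_{i,z})$, so conjugating the diagonal of $\phi(T)$ by the diagonal operator built from the composition operators restores each diagonal block to $M_{i,z}^{*}$ and turns the $i$-th superdiagonal symbol into a function $\widetilde{\psi}_{i,i+1}\in C(\bar{\mathbb{D}})\cap\mbox{Hol}(\mathbb{D})$ whose zeros are the images of the zeros of $\psi_{i,i+1}$ under a M\"{o}bius map and which differs from $\psi_{i,i+1}$ composed with that map only by a non-vanishing factor arising from $\phi'$. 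Finally, by Lemmas \ref{J.J.K.1} and \ref{maincor} the relation $\phi(T)\sim_s T$ is equivalent to the similarity of the two $\mathcal{OFB}_{n}(\mathbb{D})$ operators obtained by keeping only the primary and secondary diagonals: $R$ with diagonal $M_{i,z}^{*}$ and superdiagonal $M_{\psi_{i,i+1}}^{*}$, and $\widetilde{R}$ with diagonal $M_{i,z}^{*}$ and superdiagonal $M_{\widetilde{\psi}_{i,i+1}}^{*}$.

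\textbf{Sufficiency.} Suppose no $\psi_{i,i+1}$ vanishes on $\bar{\mathbb{D}}$. Then each ratio $\psi_{i,i+1}/\widetilde{\psi}_{i,i+1}$ is a non-vanishing function in $C(\bar{\mathbb{D}})\cap\mbox{Hol}(\mathbb{D})\subseteq\mbox{H}^{\infty}(\mathbb{D})$ whose reciprocal is of the same type, hence an invertible multiplier. Solving the telescoping system $h_{1}\equiv 1$, $h_{i+1}=h_{i}\,\psi_{i,i+1}/\widetilde{\psi}_{i,i+1}$ produces invertible multipliers $h_{i}$, and a direct check, using that the commutant of each $M_{i,z}^{*}$ consists of the operators $M_{h}^{*}$ with $h\in\mbox{H}^{\infty}(\mathbb{D})$, shows that $X=\mbox{diag}(M_{h_{1}}^{*},\dots,M_{h_{n}}^{*})$ satisfies $XR=\widetilde{R}X$. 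As the $h_{i}$ are invertible, $X$ is invertible, so $R\sim_s\widetilde{R}$, whence $\phi(T)\sim_s T$; since $\phi$ was arbitrary, $T$ is weakly homogeneous.

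\textbf{Necessity (the main obstacle).} Conversely, suppose $\psi_{i_{0},i_{0}+1}$ vanishes at some $z_{0}\in\bar{\mathbb{D}}$. As $\psi_{i_{0},i_{0}+1}\not\equiv 0$, its zero set is a proper closed subset, so one may choose $\phi$ whose associated M\"{o}bius map sends no zero of $\psi_{i_{0},i_{0}+1}$ to $z_{0}$; then $\widetilde{\psi}_{i_{0},i_{0}+1}(z_{0})\ne 0$ while $\psi_{i_{0},i_{0}+1}(z_{0})=0$. I claim $\phi(T)\not\sim_s T$. By the reduction it suffices to rule out $R\sim_s\widetilde{R}$. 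Any invertible intertwiner $X$ is upper-triangular by Lemma \ref{J.J.K.M}; equating $(i,i)$-entries forces each $X_{i,i}$ into the invertible part of the commutant of $M_{i,z}^{*}$, so $X_{i,i}=M_{h_{i}}^{*}$ with $h_{i},1/h_{i}\in\mbox{H}^{\infty}(\mathbb{D})$, i.e. $h_{i}$ bounded and bounded below on $\mathbb{D}$. Equating $(i_{0},i_{0}+1)$-entries and rearranging gives $M_{\psi_{i_{0},i_{0}+1}h_{i_{0}}}^{*}-M_{\widetilde{\psi}_{i_{0},i_{0}+1}h_{i_{0}+1}}^{*}=\tau(X_{i_{0},i_{0}+1})$, where $\tau=\tau_{M_{i_{0},z}^{*},\,M_{i_{0}+1,z}^{*}}$; the left side lies in $\ker\tau$ and the right side in $\mbox{ran}\,\tau$. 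The weight-gap hypothesis $\lim_{k}k\sqrt{a_{i,k}/a_{i+1,k}}=\infty$ is precisely what yields Property (H), $\ker\tau\cap\mbox{ran}\,\tau=\{0\}$, for consecutive diagonal blocks, so both sides vanish and $\psi_{i_{0},i_{0}+1}h_{i_{0}}=\widetilde{\psi}_{i_{0},i_{0}+1}h_{i_{0}+1}$ on $\mathbb{D}$. Letting $w\to z_{0}$ (radially if $z_{0}\in\partial\mathbb{D}$), the left side tends to $0$ while $|\widetilde{\psi}_{i_{0},i_{0}+1}(w)h_{i_{0}+1}(w)|$ stays bounded below, a contradiction. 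The genuinely delicate points are extracting Property (H) from the weight-gap hypothesis so as to annihilate the Sylvester term $\tau(X_{i_{0},i_{0}+1})$, and treating a boundary zero $z_{0}\in\partial\mathbb{D}$, where the $h_{i}$ need not extend continuously; both are handled by working with the forced multiplier identity on $\mathbb{D}$ and passing to the radial limit.
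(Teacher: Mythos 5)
Your proof is correct in substance, and its sufficiency half is essentially the paper's argument: an explicit diagonal intertwiner built from invertible multipliers (the paper keeps the composition operators $C_{\phi}$ inside the intertwiner $X_i$, you absorb them into the conjugated model $\widetilde{R}$; the telescoping products of ratios are the same device). The necessity half, however, takes a genuinely different route. The paper invokes Lemmas 2.6 and 4.6 of \cite{JJ} to produce diagonal intertwiners $X_i$ with $X_iM_{i,z}^{*}=\phi(M_{i,z}^{*})X_i$ and $X_iM_{\psi_{i,i+1}}^{*}=\phi'(M_{i,z}^{*})M_{\psi_{i,i+1}}^{*}X_{i+1}$, evaluates these on reproducing kernels to obtain the functional equation (5.2), and then uses the whole M\"{o}bius group: transitivity rules out interior zeros, while rotations, continuity on $\bar{\mathbb{D}}$ and the maximum modulus principle rule out boundary zeros. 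You instead fix a single well-chosen M\"{o}bius map and derive the multiplier identity $\psi_{i_0,i_0+1}h_{i_0}=\widetilde{\psi}_{i_0,i_0+1}h_{i_0+1}$ directly from Lemma \ref{J.J.K.M} together with Property (H) (Lemma \ref{property H}), the Sylvester relation $\ker\tau\cap\mbox{ran}\,\tau=\{0\}$ annihilating the off-diagonal term; this is more self-contained (the paper's citation of \cite{JJ} hides exactly this mechanism) and it makes the role of the weight-gap hypothesis $\lim_{k}k\sqrt{a_{i,k}/a_{i+1,k}}=\infty$ completely transparent. Two small points need patching. First, you assert that $\phi(T)$ again satisfies Condition (A); this is cleanest if, as the paper does, you first replace $T$ by its tridiagonal model $R$ via Lemma \ref{mainlemma} (weak homogeneity is a similarity invariant) and only then apply the functional calculus, since the entries of $\phi(R)$ are visibly $\tfrac{1}{k!}\phi^{(k)}(M_{i,z}^{*})M^{*}_{\psi_{i,i+1}\cdots\psi_{i+k-1,i+k}}$, which is of Condition (A) form. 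Second, when the zero $z_0$ lies on $\partial\mathbb{D}$, the existence of a M\"{o}bius map sending no zero of $\psi_{i_0,i_0+1}$ to $z_0$ requires knowing that $\psi_{i_0,i_0+1}$ does not vanish on all of $\mathbb{T}$; this follows from the maximum modulus principle (the same fact the paper uses), since a non-zero function in $C(\bar{\mathbb{D}})\cap\mbox{Hol}(\mathbb{D})$ vanishing identically on $\mathbb{T}$ would be identically zero. Neither point affects the architecture of your argument, and there are also harmless conjugation conventions being elided (e.g. $\phi'(M_{i,z}^{*})=M^{*}_{\widehat{\phi'}}$ with $\widehat{\phi'}(z)=\overline{\phi'(\bar{z})}$, so the relevant M\"{o}bius map is $z\mapsto\overline{\phi(\bar z)}$), which do not change the non-vanishing statements.
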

\begin{rem}\label{remark}
Since $\psi_{i,i+1}\in C(\bar{\mathbb{D}})\cap \mbox{Hol}(\mathbb{D})$, $i=1,2,\cdots,n-1$, and $\mbox{Mult}(\mathcal{H}_{K_{i}})=\mbox{H}^{\infty}(\mathbb{D})$, $i=1,2,\cdots,n$, each $M_{\psi_{i,i+1}}:\mathcal{H}_{K_{i}}\rightarrow\mathcal{H}_{K_{i}}$ is a bounded operator. Consequently, by Theorem 6.25 in  \cite{Paulsen}, $M_{\psi_{i,i+1}}:\mathcal{H}_{K_{i}}\rightarrow\mathcal{H}_{K_{i+1}}$, $1\le i\le n-1$, is bounded. %Moreover, $T$ is strongly irreducible due to the fact that $M^{*}_{\psi_{i,i+1}}\in\ker\tau_{M^{*}_{i,z},M^{*}_{i+1,z}}$ and $M^{*}_{\psi_{i,i+1}}\ne0$.
\end{rem}

\begin{lem}\label{property H}
Assume $M_{i,z}$ be the multiplication operator on the reproducing kernel Hilbert space $\mathcal{H}_{K_{i}}$, where $K_{i}(z,w)=\sum\limits_{n=0}^{\infty}a_{i,n}z^{n}\bar{w}^{n},\ a_{i,n}>0,\ z,w\in\mathbb{D},\ i=1,2$. If $\lim\limits_{n\rightarrow\infty}n\sqrt{\frac{a_{1,n}}{a_{2,n}}}=\infty$,
then $M^{*}_{1,z}$ and $M^{*}_{2,z}$ satisfy the Property (H).
\end{lem}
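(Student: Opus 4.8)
The plan is to prove Lemma \ref{property H}, which asserts that under the growth condition $\lim_{n\to\infty} n\sqrt{a_{1,n}/a_{2,n}} = \infty$, the adjoints $M^*_{1,z}$ and $M^*_{2,z}$ satisfy Property (H), i.e. $\ker\tau \cap \operatorname{ran}\tau = \{0\}$ where $\tau(X) = M^*_{1,z}X - X M^*_{2,z}$. First I would translate the intertwining map into concrete terms. By Lemma \cite{Richter},\cite{shields}, any $X$ intertwining $M^*_{1,z}$ and $M^*_{2,z}$ (i.e. $X M^*_{2,z} = M^*_{1,z} X$, equivalently the adjoint $X^*$ intertwines $M_{1,z}$ and $M_{2,z}$) is a multiplier $M_\psi^*$ for some $\psi \in \operatorname{Mult}(\mathcal{H}_{K_1}, \mathcal{H}_{K_2})$. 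So the kernel of $\tau$ is identified with the space of such multipliers, and the strategy is to show that no nonzero element of $\ker\tau$ can also lie in the range of $\tau$.

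The key computational step is to understand the range of $\tau$ in terms of matrix coefficients relative to the monomial bases. Since $e_{i,n}(z) = \sqrt{a_{i,n}}\, z^n$ is an orthonormal basis of $\mathcal{H}_{K_i}$, the operator $M^*_{i,z}$ is the weighted backward shift sending $e_{i,n} \mapsto \sqrt{a_{i,n}/a_{i,n-1}}\, e_{i,n-1}$ (and $e_{i,0}\mapsto 0$). I would represent an arbitrary bounded operator $Y: \mathcal{H}_{K_2}\to\mathcal{H}_{K_1}$ by its matrix $(y_{m,n})$ in these bases and compute the matrix of $\tau(Y) = M^*_{1,z}Y - Y M^*_{2,z}$ explicitly; each entry of $\tau(Y)$ is a difference of a down-shifted weighted $y$ entry and a weighted $y$ entry. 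An element $Z = (z_{m,n})$ lies in $\ker\tau$ precisely when its matrix commutes with the shifts in the appropriate weighted sense, which forces a rigid Toeplitz-type (constant-along-super-diagonals) structure reflecting that $Z$ comes from a multiplier; concretely the entries along each diagonal are determined by a single sequence of Taylor coefficients of $\psi$, modulated by the weight ratios $\sqrt{a_{1,m}/a_{2,n}}$.

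The heart of the argument is then a comparison of growth rates. If a nonzero $Z \in \ker\tau$ also equals $\tau(Y)$ for some bounded $Y$, I would derive a recursion expressing the diagonal data of $Z$ in terms of $Y$'s entries, and show that the hypothesis $\lim_n n\sqrt{a_{1,n}/a_{2,n}} = \infty$ forces the corresponding entries of $Y$ to grow without bound, contradicting the boundedness (Hilbert--Schmidt-type or operator-norm) constraint on $Y$. The point is that the weight ratio $\sqrt{a_{1,n}/a_{2,n}}$ appearing in the intertwining relation, combined with the factor of $n$ coming from telescoping the backward-shift recursion, blows up, so the only way $Z$ can be both in $\ker\tau$ and a genuine commutator $\tau(Y)$ is for $Z$ to vanish. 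I expect this growth-rate estimate to be the main obstacle: one must carefully track how the product of weight ratios $\prod \sqrt{a_{i,k}/a_{i,k-1}}$ telescopes and show that the divergence of $n\sqrt{a_{1,n}/a_{2,n}}$ is exactly what prevents the series defining $Y$ from converging.

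An alternative, cleaner route I would consider is spectral: Property (H) is equivalent to the restriction of $\tau$ to its range being injective, which holds when $0$ is not an interior accumulation point causing overlap of generalized eigenspaces. Using that $M^*_{i,z}\in\mathcal{B}_1(\mathbb{D})$ with eigenvector $K_i(\cdot,\bar w)$ at eigenvalue $w$, I would examine eigenvalue data of the operators $\tau$ induces and reduce Property (H) to showing that the map $w \mapsto \Vert K_1(\cdot,\bar w)\Vert^2 / \Vert K_2(\cdot,\bar w)\Vert^2$ does not admit a bounded solution to the relevant $\bar\partial$-equation unless it is trivial; the growth condition translates into the requisite quantitative statement about these norm ratios near the boundary. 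Whichever route is chosen, the decisive input is the hypothesis $\lim_{n\to\infty} n\sqrt{a_{1,n}/a_{2,n}} = \infty$, and I would keep the first, matrix-coefficient approach as primary since it makes the role of that hypothesis most transparent.
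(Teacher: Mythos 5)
Your primary (matrix-coefficient) route is correct in substance, but it is not the paper's proof: the paper disposes of this lemma in two lines, by observing that $M_{i,z}^{*}$ is the backward weighted shift with weight sequence $\{\sqrt{a_{i,n}/a_{i,n+1}}\}_{n\ge 0}$ and then citing Proposition 3.5 of \cite{JJ}, whose hypothesis is exactly the condition $\lim_{n}n\sqrt{a_{1,n}/a_{2,n}}=\infty$ after the weight products are telescoped. What you propose is, in effect, a self-contained reconstruction of that cited proposition: identify $\ker\tau$ (where $\tau(Y)=M_{1,z}^{*}Y-YM_{2,z}^{*}$) with adjoints of multipliers via the Richter--Shields lemma quoted in Section 2, so that a kernel element $X=M_{\psi}^{*}$, $\psi=\sum_{k}c_{k}z^{k}$, has matrix entries $x_{m,n}=\bar{c}_{n-m}\sqrt{a_{1,m}/a_{2,n}}$ in the monomial orthonormal bases; equate these with the entries $w_{1,m}y_{m+1,n}-w_{2,n-1}y_{m,n-1}$ of $\tau(Y)$; after the normalization $v_{m}=\sqrt{a_{2,m+d-1}/a_{1,m}}\,y_{m,m+d-1}$ the relation on the $d$-th superdiagonal telescopes to $v_{m+1}-v_{m}=\bar{c}_{d}$, hence $v_{m}=v_{0}+m\bar{c}_{d}$, and the bound $|y_{m,n}|\le\Vert Y\Vert$ then forces $c_{d}=0$ under the growth hypothesis. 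The paper's version buys brevity; yours buys transparency about exactly where the factor $n$ and the ratio $\sqrt{a_{1,n}/a_{2,n}}$ enter. Both start from the same weighted-shift realization.

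One point in your sketch must be handled explicitly, however: for the $d$-th superdiagonal with $d\ge 2$ the telescoping gives $|y_{m,m+d-1}|\approx m|c_{d}|\sqrt{a_{1,m}/a_{2,m+d-1}}$, whereas the hypothesis controls $m\sqrt{a_{1,m}/a_{2,m}}$; the two differ by the factor $\sqrt{a_{2,m}/a_{2,m+d-1}}$, which a priori could decay to $0$ and destroy the contradiction. To close this, observe that if $m\sqrt{a_{1,m}/a_{2,m+d-1}}$ stayed bounded while $m\sqrt{a_{1,m}/a_{2,m}}\to\infty$, then $a_{2,m+d-1}/a_{2,m}\to\infty$, which forces $a_{2,n}$ to grow super-exponentially along an arithmetic progression and hence $\limsup_{n}a_{2,n}^{1/n}=\infty$, contradicting the convergence of $K_{2}(z,w)=\sum_{n}a_{2,n}z^{n}\bar{w}^{n}$ on $\mathbb{D}\times\mathbb{D}$. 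With that observation every coefficient $c_{d}$ vanishes, so $X=0$ and Property (H) holds. Your alternative ``spectral/$\bar{\partial}$'' route, by contrast, is too vague to count as a proof (Property (H) is never actually reformulated as a statement about bounded solutions of a $\bar{\partial}$-equation), so the matrix argument should indeed remain the primary one.
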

\begin{proof}
As we all known that $M_{i,z}^{*}$ can be regarded as the weighted unilateral shift with the weighted sequence $\{\sqrt{\frac{a_{i,n}}{a_{i,n+1}}}\}_{n=0}^{\infty}$. Then using the Proposition 3.5 in \cite{JJ}, the conclusion is true.
\end{proof}
Next, we give the proof of the main theorem.

\textbf{The proof of Theorem \ref{main1}: }
\begin{proof}
By Lemma \ref{mainlemma}, we have known that $T$ is weakly homogeneous if and only if $T_{1}$ is weakly homogeneous, where $$T_{1}=\begin{pmatrix}
M_{1,z}^{*}&M_{\psi_{1,2}}^{*}&0&\cdots&0&0\\
0&M_{2,z}^{*}&M_{\psi_{2,3}}^{*}&0&\cdots&0\\
\vdots&\ddots&\ddots&\ddots&\ddots&\vdots\\
0&\cdots&\cdots&M_{n-2,z}^{*}&M_{\psi_{n-2,n-1}}^{*}&0\\
0&\cdots&\cdots&0&M_{n-1,z}^{*}&M_{\psi_{n-1,n}}^{*}\\
0&\cdots&\cdots&0&0&M_{n,z}^{*}
\end{pmatrix}.$$
We first assume that $T_{1}$ is weakly homogeneous, which is equivalent to $T_{1}^{*}$ is weakly homogeneous.
By a routine calculation, we obtain
$$\tiny{
\setlength{\arraycolsep}{1.2pt}
\phi(T_{1})=\begin{pmatrix}
\phi(M_{1,z}^{*})&\phi'(M_{1,z}^{*})M_{\psi_{1,2}}^{*}&\frac{1}{2}\phi''(M_{1,z}^{*})M_{\psi_{1,2}\psi_{2,3}}^{*}&*&\cdots&*&*\\
0&\phi(M_{2,z}^{*})&\phi'(M_{2,z}^{*})M_{\psi_{2,3}}^{*}&\frac{1}{2}\phi''(M_{2,z}^{*})M_{\psi_{2,3}\psi_{3,4}}^{*}&\cdots&*&*\\
0&0&\phi(M_{3,z}^{*})&\phi'(M_{3,z}^{*})M _{\psi_{3,4}}^{*}&\cdots&*&*\\
\vdots&\ddots&\ddots&\ddots&\ddots&\vdots&\vdots\\
0&0&\cdots&\cdots&\cdots&\phi(M_{n-1,z}^{*})&\phi'(M_{n-1,z}^{*})M_{\psi_{n-1,n}}^{*}\\
0&0&\cdots&\cdots&\cdots&0&\phi(M_{n,z}^{*})
\end{pmatrix},}$$
for each $\phi$ in $\mbox{M}\ddot{o}\mbox{b}$.
By Lemma \ref{mainlemma}, it follows that $\phi(T_{1})$ is similar to $T'$, where $$T'=\begin{pmatrix}
\phi(M_{1,z}^{*})&\phi'(M_{1,z}^{*})M_{\psi_{1,2}}^{*}&0&\cdots&0&0\\
0&\phi(M_{2,z}^{*})&\phi'(M_{2,z}^{*})M_{\psi_{2,3}}^{*}&\cdots&0&0\\
0&0&\phi(M_{3,z}^{*})&\cdots&0&0\\
\vdots&\ddots&\ddots&\ddots&\ddots&\vdots\\
0&\cdots&\cdots&0&\phi(M_{n-1,z}^{*})&\phi'(M_{n-1,z}^{*})M_{\psi_{n-1,n}}^{*}\\
0&\cdots&\cdots&0&0&\phi(M_{n,z}^{*})
\end{pmatrix}.$$
Thus we have $T_{1}\sim_{s}T'$. Then by Lemma 2.6 and 4.6 in \cite{JJ},
there exist invertible and bounded operators $X_{1},\cdots,X_{n}$ such that $X_{i}M_{i,z}^{*}=\phi(M_{i,z}^{*})X_{i},\ 1\le i\le n$, and $X_{i}M_{\psi_{i,i+1}}^{*}=\phi'(M_{i,z}^{*})M_{\psi_{i,i+1}}^{*}X_{i+1}$, $1\le i \le n-1.$
%Set $t_{1}(w):=K_{i}(\cdot,\bar{w})$ and $t_{i,\phi}:=t_{i}\circ\phi^{-1}$.
Following $X_{i}M_{i,z}^{*}=\phi(M_{i,z}^{*})X_{i},\ 1\le i\le n$, there exists a $\psi_{i}\in H^{\infty}(\mathbb{D})$ such that $X_{i}=M_{\psi_{i}}C_{\phi}$ where $X_{i}(1):=\psi_{i}\in\mathcal{H}_{K_{i}}$. Then we can see that $X_{i}(K_{i,\bar{w}}(\cdot))=\psi_{i}(w)K_{i,\overline{\phi(w)}}(\cdot)$ for all $w$ in $\mathbb{D}$, $1\le i\le n$. From $X_{i}M_{\psi_{i,i+1}}^{*}=\phi'(M_{i,z}^{*})M_{\psi_{i,i+1}}^{*}X_{i+1}$, $1\le i \le n-1$, we obtain
\begin{equation}\label{5.1}
X_{i}M_{\psi_{i,i+1}}^{*}K_{i+1,\bar{w}}(\cdot)=\phi'(M_{i,z}^{*})M_{\psi_{i,i+1}}^{*}X_{i+1}K_{i+1,\bar{w}}(\cdot),\ w\in\mathbb{D}.
\end{equation}
By calculating the left side of the equation (\ref{5.1}), we get
$$X_{i}M_{\psi_{i,i+1}}^{*}K_{i+1,\bar{w}}(\cdot)=X_{i}K_{i,\bar{w}}(\cdot)\overline{\psi_{i,i+1}(\bar{w})}=\psi_{i}(w)\overline{\psi_{i,i+1}(\bar{w})}K_{i,\overline{\phi(w)}}(\cdot),\  w\in\mathbb{D},$$
where the first equal sign is because for all $h\in\mathcal{H}_{K_{i}}$,
\begin{equation*}
\langle M_{\psi_{i,i+1}}^{*}K_{i+1,\bar{w}}(\cdot),h\rangle=\langle K_{i+1,\bar{w}}(\cdot),\psi_{i,i+1}(\cdot)h(\cdot)\rangle=\overline{\psi_{i,i+1}(\bar{w})h(\bar{w})}
=\langle K_{i,\bar{w}}(\cdot)\overline{\psi_{i,i+1}(\bar{w})},h\rangle.
\end{equation*}
%\begin{equation*}
%\begin{aligned}
%\langle M_{\psi_{i,i+1}}^{*}K_{i+1,\bar{w}}(\cdot),h\rangle
%&=\langle K_{i+1,\bar{w}}(\cdot),\psi_{i,i+1}(\cdot)h(\cdot)\rangle\\
%&=\overline{\langle\psi_{i,i+1}(\cdot)h(\cdot),K_{i+1,\bar{w}}(\cdot)\rangle}\\
%&=\overline{\psi_{i,i+1}(\bar{w})h(\bar{w})}\\
%&=\langle\overline{\psi_{i,i+1}(\bar{w})},h(\bar{w})\rangle\\
%&=\langle %K_{i,\bar{w}}(\cdot)\overline{\psi_{i,i+1}(\bar{w})},h\rangle.
%\end{aligned}
%\end{equation*}
Similarly, calculate the other side of the equation (\ref{5.1}) and we have
\begin{equation*}
\begin{aligned}
\phi'(M_{i,z}^{*})M_{\psi_{i,i+1}}^{*}X_{i+1}K_{i+1,\bar{w}}(\cdot)&=\phi'(M_{i,z}^{*})\psi_{i+1}(w)\overline{\psi_{i,i+1}(\overline{\phi(w)})}K_{i,\overline{\phi(w)}}(\cdot)\\
&=\psi_{i+1}(w)\overline{\psi_{i,i+1}(\overline{\phi(w)})}\phi'(\phi(w))K_{i,\overline{\phi(w)}}(\cdot),w\in\mathbb{D}.
\end{aligned}
\end{equation*}
Here the second equal sign is because for all $h\in\mathcal{H}_{K_{i}}$,
\begin{equation*}
\langle\phi'(M_{i,z}^{*})K_{i,\bar{w}}(\cdot),h\rangle
=\langle K_{i,\bar{w}}(\cdot),\big(\phi'(M_{i,z}^{*})\big)^{*}h(\cdot)\rangle
=\langle K_{i,\bar{w}}(\cdot),\hat{\phi'}(\cdot)h(\cdot)\rangle
=\langle K_{i,\bar{w}}(\cdot)\overline{\hat{\phi'}(\bar{w})},h\rangle,
\end{equation*}
%\begin{equation*}
%\begin{aligned}
%\langle\phi'(M_{i,z}^{*})K_{i,\bar{w}}(\cdot),h\rangle
%&=\langle K_{i,\bar{w}}(\cdot),\big(\phi'(M_{i,z}^{*})\big)^{*}h(\cdot)\rangle\\
%&=\langle K_{i,\bar{w}}(\cdot),\hat{\phi'}(\cdot)h(\cdot)\rangle\\
%&=\overline{\hat{\phi'}(\bar{w})}\overline{h(\bar{w})}\\
%&=\langle\overline{\hat{\phi'}(\bar{w})},h(\bar{w})\rangle\\
%&=\langle K_{i,\bar{w}}(\cdot)\overline{\hat{\phi'}(\bar{w})},h\rangle,
%\end{aligned}
%\end{equation*}
where $\hat{\phi'}(\bar{w})=\overline{\phi'(w)}$. So by comparing the two sides of the equation (\ref{5.1}), we obtain
\begin{equation}\label{5.2}
\psi_{i}(w)\overline{\psi_{i,i+1}(\bar{w})}=\psi_{i+1}(w)\overline{\psi_{i,i+1}(\overline{\phi(w)})}\phi'(\phi(w)), w\in\mathbb{D}.
\end{equation}

Next, we will start with equation (\ref{5.2}) and prove by contradiction that $\psi_{i,i+1},1\le i\le n-1$, is non-vanishing in $\mathbb{D}$. Suppose that there exists a point $w_{0}\in\mathbb{D}$ such that $\psi_{i,i+1}(\overline{w_{0}})=0$. Note that $\phi'(\phi(w_{0}))\ne0$. If the statement would not hold, that is, $\phi'(\phi(w_{0}))=0$, then we can obtain $w_{0}=\frac{1}{\bar{\alpha}}$ is not in $\mathbb{D}$ if we assume $\phi(w)=\frac{\alpha-w}{1-\bar{\alpha}w},\ |\alpha|<1$. Meanwhile, for $1\le i\le n$, $X_{i}$ is bounded and invertible, so $\psi_{i}(w)\neq 0$ for all $w\in\mathbb{D}$.
 %and $\frac{1}{\psi_{i}}$ is also a bounded holomophic function.
Therefore following the fact that M$\ddot{o}$bius acts transitively on $\mathbb{D}$, one can see that $\psi_{i,i+1}(w)=0$ for all $w$ in $\mathbb{D}$ and hence $\psi_{i,i+1}(w)=0$ for all $w$ in $\overline{\mathbb{D}}$. This contradicts the fact that $\psi_{i,i+1}$ is a non-zero function. Therefore $\psi_{i,i+1}(w)\ne 0$ for all $w$ in $\mathbb{D}$.

Now we prove that $\psi_{i,i+1}, 1\le i\le n-1$, is non-vanishing on $\mathbb{T}=\{z\in\mathbb{C}:|z|=1\}$. Replacing $\phi$ by biholomorphic map $z\mapsto e^{i\theta}z$
in equation (\ref{5.2}), we have
\begin{equation}\label{5.3}
\psi_{i}(w)\overline{\psi_{i,i+1}(\bar{w})}=e^{i\theta}\psi_{i+1}(w)\overline{\psi_{i,i+1}(e^{-i\theta}\bar{w})},\ w\in\mathbb{D},\ \theta\in\mathbb{R}.
\end{equation}
Suppose there exists a point $e^{i\theta_{0}}$ such that $\psi_{i,i+1}(e^{i\theta_{0}})=0$. Choose a sequence $\{w_{n}\}$ in $\mathbb{D}$ such that $w_{n}\rightarrow e^{-i\theta_{0}}$ as $n\rightarrow \infty$. From the equation (\ref{5.3}), we get
\begin{equation}\label{5.4}
\psi_{i}(w_{n})\overline{\psi_{i,i+1}(\overline{w_{n}})}=e^{i\theta}\psi_{i+1}(w_{n})\overline{\psi_{i,i+1}(e^{-i\theta}\overline{w_{n}})}.
\end{equation}
Since $\psi_{i,i+1}\in C(\bar{\mathbb{D}})$ and $\psi_{i},\ \psi_{i+1}$ are bounded above and below on $\mathbb{D}$, so from the equation (\ref{5.4}), as $n\rightarrow\infty$, we obtain $\psi_{i,i+1}(e^{i(\theta_{0}-\theta)})=0$ for all $\theta\in\mathbb{R}$. Thus $\psi_{i,i+1}$ is zero at every point of $\mathbb{T}$, and then by an application of  maximum modulus principle, it follows that $\psi_{i,i+1}$ is identically zero on $\bar{\mathbb{D}}$, which is a contraction to our assumption that $\psi_{i,i+1}$ is a non-zero function.

Conversely, it suffices to show that $T_{1}^{*}$ is weakly homogeneous. By Lemma \ref{mainlemma}, we just need to prove that $T_{1}^{*}\sim_{s}T_{2}$, where
$$T_{2}=\begin{pmatrix}
M_{\phi}&0&0&\cdots&0&0\\
M_{\psi_{1,2}\phi'}&M_{\phi}&0&\cdots&0&0\\
0&M_{\psi_{2,3}\phi'}&M_{\phi}&\cdots&0&0\\
\vdots&\ddots&\ddots&\ddots&\vdots&\vdots\\
\vdots&\ddots&\ddots&\ddots&\ddots&\vdots\\
0&0&\cdots&0&M_{\psi_{n-1,n}\phi'}&M_{\phi}
\end{pmatrix}.
$$
Since each $\psi_{i,i+1},\ 1\le i\le n-1$, is in $\mbox{C}(\bar{\mathbb{D}})\cap\mbox{Hol}(\mathbb{D})$ and is non-vanishing,\ $\frac{1}{\psi_{i,i+1}}$ is a bounded analytic function on $\mathbb{D}$ for $1\le i\le n-1$. Following the condition that $\mbox{Mult}(\mathcal{H}_{K_{i}})=\mbox{H}^{\infty}(\mathbb{D})$, $1\le i \le n$, we can see that $M_{\psi_{i,i+1}}$ is bounded and invertible on the each Hilbert space $\mathcal{H}_{K_{i}}$. Note that for each $\phi\in\mbox{M}\ddot{o}\mbox{b}$ and $1\le i\le n$, $C_{\phi}$ is bounded and invertible on the $\mbox{M}\ddot{o}\mbox{bius}$ invariant Hilbert space $\mathcal{H}_{K_{i}}$.
Set $$X=\begin{pmatrix}
X_{1}&0&0&\cdots&0\\
0&X_{2}&0&\cdots&0\\
\vdots&\ddots&\ddots&\ddots&\vdots\\
0&\cdots&0&X_{n-1}&0\\
0&\cdots&\cdots&0&X_{n}
\end{pmatrix},$$
where $X_{1}=M_{\frac{(\psi_{1,2}\circ\phi)(\phi'\circ\phi)}{\psi_{1,2}}}\cdots M_{\frac{(\psi_{n-1,n}\circ\phi)(\phi'\circ\phi)}{\psi_{n-1,n}}}C_{\phi },\
X_{2}=M_{\frac{(\psi_{2,3}\circ\phi)(\phi '\circ\phi)}{\psi_{2,3}}}\cdots \notag M_{\frac{(\psi_{n-1,n}\circ\phi )(\phi '\circ\phi)}{\psi_{n-1,n}}}C_{\phi},\\
\cdots,\ X_{n}=C_{\phi}.$ Since each $X_{i}$, $1\le i\le n$, is invertible,
$X$ is invertible and satisfies $XT_{2}=T_{1}^{*}X$. The proof of the theorem is now complete.
\end{proof}

\end{document}